\def\comment#1{}
\newcommand{\R}{\mathbb{R}}
\newcommand{\N}{\mathbb{N}}
\newcommand{\C}{\mathbb{C}}
\newcommand{\D}{\mathbb{D}}
\newcommand{\HH}{\mathbb{H}}
\newcommand{\PP}{\mathbb{P}}
\newcommand{\Ss}{\mathbb{S}}
\newcommand{\cF}{\mathcal{F}}
\newcommand{\cA}{\mathcal{A}}
\newcommand{\cL}{\mathcal{L}}
\newcommand{\cUL}{\mathcal{UL}}
\newcommand{\cRL}{\mathcal{RL}}
\newcommand{\cM}{\mathcal{M}}
\newcommand{\cC}{\mathcal{C}}
\newcommand{\cH}{\mathcal{H}}
\newcommand{\tL}{\widetilde{L}}
\def\to{\mathop{\rightarrow}}
\def\dans{\mathop{\subset}}
\newcommand{\moins}{\setminus}
\newcommand{\Iim}{\mathrm{Im}}
\newcommand{\dist}{\mathrm{dist}}
\newcommand{\Ciloc}{C^{\infty}_{\text{loc}}}
\newcommand{\Cil}{C^{\infty}_{l}}
\def\dans{\mathop{\subset}}
\newtheorem{theorem}{Théorème}[section]
\newtheorem{maintheorem}{Théorème}
\newtheorem*{theorem*}{Théorème}
\newtheorem{lemma}[theorem]{Lemme}
\newtheorem{proposition}[theorem]{Proposition}
\newtheorem{corollary}[theorem]{Corollaire}
\theoremstyle{definition}
\newtheorem{defn}[theorem]{Définition}
\theoremstyle{remark}
\newtheorem{rem}[theorem]{Remarque}
\newtheorem{ex}{Exemple}
\title[Prescription de courbure]{
Prescription de courbure des feuilles des laminations: retour sur un théorème de Candel
}
\date{}
\author[S. Alvarez]{S{\'e}bastien Alvarez}
\address{Sébastien Alvarez, CMAT, Facultad de Ciencias, Universidad de la Rep\'ublica, Uruguay}
\email{salvarez@cmat.edu.uy}
\author[G. Smith]{Graham Smith}
\address{Graham Smith, Instituto de Matem\'atica, UFRJ, Av. Athos da Silveira Ramos 149, Centro de Tecnologia - Bloco C, Cidade Universit\'aria - Ilha do Fund\~ao, Caixa Postal 68530, 21941-909, Rio de Janeiro, RJ - Brazil}
\email{grahamandrewsmith@gmail.com}
\begin{document}

\maketitle
\begin{abstract}
Dans cet article, nous revenons sur un célèbre théorème de Candel que nous renforçons en prouvant qu'étant donnée une lamination compacte par surfaces hyperboliques, toute fonction négative lisse dans les feuilles et transversalement continue est la fonction courbure d'une unique métrique laminée dans la classe conforme correspondante. Nous interprétons ce fait comme la continuité de solutions de certaines EDP elliptiques dans une topologie, dite de Cheeger-Gromov, sur l'espace des variétés riemanniennes complètes pointées.
 \end{abstract}

\begin{abstract}
In the present paper, we revisit a famous theorem by Candel that we generalize by proving that given a compact lamination by hyperbolic surfaces, every negative function smooth inside the leaves and transversally continuous is the curvature function of a unique laminated metric in the corresponding conformal class. We give an interpretation of this result as a continuity result about the solutions of some elliptic PDEs in the so called Cheeger-Gromov topology on the space of complete pointed riemannian manifolds.
\end{abstract}

\section{Introduction}

\paragraph{\textbf{Uniformisation simultanée}} Dans cet article, nous proposons de revenir sur un théorème célèbre, à juste titre, dû à Candel portant sur les \emph{laminations compactes par surfaces hyperboliques}: \cite{Candel}. Avant de l'énoncer, rappelons qu'une lamination compacte $(X,\cL)$ de dimension $d$ est la donnée d'un espace compact $X$  métrisable, muni d'un atlas de cartes laminées, c'est-à-dire localement homéomorphes au produit de $\R^d$ par un espace topologique, et dont les applications de transition préservent cette structure de produit local et sont lisses dans la direction $\R^d$. L'espace $X$ possède alors une partition en variétés de dimension $d$ appelées \emph{feuilles}. Lorsque $d=2$ et que les applications de transition entre cartes sont \emph{holomorphes} dans la direction $\R^2$, les feuilles sont naturellement munies de structures de surfaces de Riemann. Nous disons alors que $(X,\cL)$ est une \emph{lamination par surfaces de Riemann}. Si de plus les feuilles sont uniformisées par le disque, nous disons que $(X,\cL)$ est une lamination par surfaces hyperboliques.

Le théorème d'uniformisation simultanée de Candel s'énonce alors ainsi. \emph{Pour toute lamination compacte par surfaces hyperboliques $(X,\cL)$ il existe une unique famille de métriques riemanniennes complètes et conformes de courbure $-1$ dans les feuilles qui varie continûment transversalement dans les cartes au sens $\Ciloc$ (celui de la convergence uniforme des dérivées à tout ordre sur les compacts).} C'est un théorème subtil de continuité. L'existence d'une unique métrique hyperbolique complète conforme dans chaque feuille vient du théorème d'uniformisation classique de Poincaré-K{\oe}be. La difficulté est bien sûr de prouver que cette métrique varie continûment avec la feuille, et de donner un sens précis à ceci. Il faut également citer Verjovsky \cite{Ver}.

\paragraph{\textbf{Prescription de courbure}} Nous appellerons dans la suite \emph{lamination 
riemannienne} toute lamination munie d'une \emph{métrique laminée}, c'est-à-dire d'une famille de métriques dans les feuilles variant transversalement continûment dans les cartes au sens $\Ciloc$. Notre travail sera prétexte à développer un point de vue géométrique sur les laminations riemanniennes compactes déjà présent dans \cite{AL,AlvarezSmith,AlvarezBarral,ABC,AlvLopCandel,Lessa}. La philosophie sous-jacente à ces travaux est que la géométrie des feuilles d'une lamination riemannienne compacte possède une certaine récurrence. Plus précisément, les (revêtements des) feuilles d'une telle lamination forment une famille de variétés riemanniennes, compacte pour une certaine topologie, dite de \emph{Cheeger-Gromov} (définie au \S \ref{ss_chigouillegrougrouile}). Ce fait, prouvé par Lessa dans sa thèse (voir \cite{Lessa}) et dont nous donnerons une preuve élémentaire au \S \ref{ss.compacite}, nous permet d'exprimer un nouveau point de vue sur le théorème de Candel: il s'agit de \emph{la continuité des applications d'uniformisation pour la topologie de Cheeger-Gromov} sous des hypothèses que nous expliciterons au \S \ref{ss.cont_unif} (notons la similarité de ce résultat avec le problème de Plateau hyperbolique traité dans \cite{Smith_Plateau}). Il nous permet également de prouver le résultat principal de cet article.

\begin{maintheorem}\label{prescription_courboubouille}
Soient $(X,\cL)$ une lamination compacte par surfaces hyperboliques et $\kappa:X\to(0,\infty)$, une fonction positive lisse dans les feuilles et transversalement continue dans les cartes locales pour la topologie $\Ciloc$. Alors il existe une unique métrique laminée dans la classe conforme correspondante telle que la courbure gaussienne des feuilles soit en tout point donnée par $-\kappa$.
\end{maintheorem}

Ici encore, le problème n'est pas tant celui de l'existence d'une métrique complète de courbure négative prescrite dans chaque feuille (encore faut-il demander certaines bornes sur la courbure) que celui de la continuité transverse de cette métrique. Rappelons qu'étant donnée une surface riemannienne $(\Sigma,g)$, dont nous noterons $-\kappa_0$ la courbure gaussienne, et une fonction $u$ lisse sur $\Sigma$, la courbure gaussienne $-\kappa$ de la métrique riemannienne conformément équivalente $e^{2u}g$ est donnée par la formule
\begin{equation}\label{formule_magique}
-\kappa=e^{-2u}\left(-\kappa_0-\Delta u\right),
\end{equation}
où $\Delta$ représente l'opérateur de Laplace-Beltrami de la métrique $g$. Le problème de prescription de courbure se ramène donc à celui de la résolution de l'EDP elliptique d'inconnue $u$
\begin{equation}
\Delta u=e^{2u}\kappa-\kappa_0.
\end{equation}
Cette équation a une longue histoire. Poincaré lui-même a résolu une équation similaire pour uniformiser les surfaces fermées de caractéristique d'Euler négative: voir \cite{Poinc}. L'approche variationnelle moderne à ce problème est dûe à Berger dans \cite{Berger} qui prouve le Théorème \ref{prescription_courboubouille} dans le cas d'une surface fermée de caractéristique d'Euler négative. Kazdan et Warner vont plus loin dans \cite{KW} en caractérisant les fonctions qui peuvent être la fonction courbure d'une telle surface. Enfin dans \cite{AMc,BK,HT,KW2}, on trouvera l'étude de ce même problème de prescription de courbure pour des surfaces ouvertes. Nous donnerons nous-même au \S \ref{ss.prescription_disque} une preuve auto-contenue dans le cas des disques riemanniens complets à courbure négative, car nous aurons besoin de certaines bornes ainsi que de la régularité des solutions avec les données de l'équation.
 
Il s'agit donc d'étudier la continuité transverse de ces métriques, obtenues par résolution d'une EDP. Ce n'est pas la première fois qu'est posé le problème de la continuité transverse de la solution d'EDP dans les feuilles d'une lamination ou d'un feuilletage. La difficulté est toujours la même: la présence de l'holonomie impose des restrictions d'ordre dynamique, et fait en particulier que des feuilles voisines peuvent très bien ne pas être difféomorphes. Un exemple notable est le suivant. Garnett a introduit en 1983 les \emph{mesures harmoniques} (nous en parlerons au \S \ref{ss.gaussboubouille}) et a eu besoin de prouver la continuité transverse du noyau de la chaleur feuilleté: c'est \cite[Fact 1]{Garnett}. Dans cet article la continuité n'est prouvée qu'en supposant que les feuilles sont sans holonomie. C'est Candel qui en utilisant une méthode plus sophistiquée a prouvé le cas général dans \cite{Candel_harmonic}. C'est également ce type de problème qui a motivé le second travail de Lessa issu de sa thèse, \cite{Lessa2}. Enfin, alors qu'ils terminaient une première version de cet article, les auteurs ont pris connaissance d'un manuscrit de Mu\~niz et Verjovsky donnant une nouvelle preuve du théorème de Candel en utilisant le flot de Ricci tangent aux feuilles d'une lamination par surfaces hyperboliques: \cite{MunizVerjovsky}. 

Nous présentons ici deux nouvelles preuves du théorème de Candel: la première utilisant la continuité des applications d'uniformisation dans la topologie de Cheeger-Gromov, et la seconde basée sur une version laminée du théorème de Gauss-Bonnet prouvée par Ghys (voir le \S \ref{ss.gaussboubouille}) et sur la résolution d'EDP elliptiques.

\paragraph{\textbf{Prescription lisse}} Il serait intéressant d'avoir une version lisse du Théorème \ref{prescription_courboubouille}. Plus précisément, supposons qu'il nous soit donné un \emph{feuilletage} d'une \emph{variété} par surfaces hyperboliques, une métrique Riemannienne \emph{définie globalement}, ainsi qu'une fonction strictement négative, et que tous ces objets soient lisses. Par le Théorème \ref{prescription_courboubouille}, il existe une unique métrique laminée dans la classe conforme correspondante dont la courbure dans les feuilles soit donnée par cette fonction. \emph{Cette métrique laminée est-elle induite par une métrique riemannienne lisse définie globalement? En particulier, la métrique de Candel varie-t-elle de façon transversalement lisse avec la feuille?} 

Nos méthodes permettent de répondre positivement à cette question pour certains feuilletages qui sont ``développables'' en un certain sens: nous renvoyons au \S \ref{ss.variation_lisse} et plus précisément à l'énoncé du Théorème \ref{t.prescription_lisse_feuilletage}. C'est le cas par exemple des feuilletages transverses à des fibrations au dessus de surfaces hyperboliques compactes, et des feuilletages préservant certaines structures géométriques transverses, notamment les feuilletages riemanniens, et les feuilletages de Lie. Dans le cas général, le problème paraît nettement plus difficile que celui que nous traitons ici et requiert une étude dynamique de la séparation des feuilles.

\paragraph{\textbf{Motivations}} Le théorème de Candel a eu beaucoup d'impact, notamment en théorie des feuilletages et en systèmes dynamiques. Il est par exemple essentiel dans la théorie \emph{du cercle universel} développée par Thurston et qui relie l'existence d'actions de groupes fondamentaux de variétés de dimension $3$ sur le cercle à celle de feuilletages taut (ce sont les feuilletages munis d'une transversale fermée rencontrant toute feuille). En effet, ce théorème s'applique à toute variété $M$ atoroïdale de dimension $3$  munie d'un feuilletage taut $\cF$ par surfaces. Le groupe fondamental d'une telle variété agit alors sur ce que l'on appelle le cercle universel, obtenu en amalgamant les cercles à l'infini des feuilles de $\cF$. L'action obtenue reflète alors la dynamique du feuilletage: nous renvoyons à \cite{Calegari_book}, ainsi qu'à
 sa bibliographie très complète, pour une excellente introduction à cette théorie ``pseudo-Anosov'' des feuilletages.

D'autre part, l'étude des laminations par surfaces de Riemann a été popularisée par Sullivan et ses travaux en dynamique complexe: \cite{sullivan1988}. Il introduit dans cet article le concept d'\emph{espace de Teichmüller} d'une lamination par surfaces hyperboliques dont il approfondit l'étude dans \cite{sullivan1993}. Le théorème de Candel est fondamental dans les études faites de cet espace: voir notamment \cite{AL,AlvarezSmith,deroin2007,penner-saric2008,saric2009}. L'étude systématique de ces espaces est encore assez peu poussée, et nous espérons que le point de vue géométrique que nous développons puisse être utile.

Enfin, l'existence des métriques feuilletées ou laminées à courbure négative est importante en \emph{théorie ergodique des feuilletages}, étant entendu que ceux-ci ne préservent que très rarement des mesures transverses. Lorsque les feuilles d'un feuilletage sont à courbure négative, le \emph{flot géodésique feuilleté} possède une certaine forme d'hyperbolicité appelée \emph{hyperbolicité feuilletée}. Il est alors possible de profiter de cette hyperbolicité pour étudier des mesures invariantes intéressantes par ce flot (SRB, de Gibbs, etc.), comme cela est fait dans \cite{Alvharmonic,Alvarezu-Gibbs,AlvarezGibbs,AY,BGM}. Notons que le problème de trouver une \emph{entropie métrique} pour les feuilletages analogue à celle des systèmes dynamiques est encore ouvert. L'utilisation des métriques à courbure négative dans les feuilles paraît essentielle.

\paragraph{\textbf{Structure de l'article}} L'objectif principal de notre travail est d'introduire un cadre topologique adapté à l'étude de problèmes géométriques dans les feuilles de laminations compactes. Pour cela, nous aurons besoin de résultats déjà connus sous une forme légèrement différente (notamment sur la semi-continuité de la fonction feuille, voir \cite{Lessa} que nous interprétons comme un résultat de compacité, et sur les bornes des solutions de l'équation de courbure, voir \cite{AMc,BK}, ainsi que de leurs dérivées). Afin de simplifier la lecture de notre article, nous avons décidé de rédiger de nouvelles preuves, souvent plus courtes, de ces résultats.

Dans la section \ref{s.CG_unif} nous définissons le mode de convergence au sens de Cheeger-Gromov, et discutons la continuité des applications d'uniformisation. La section \ref{s.lam_CG} est dédiée à la preuve de la compacité de l'espace des revêtements riemanniens pointés des feuilles d'une lamination riemannienne compacte au sens de Cheeger-Gromov. La section \ref{simunif} contient la preuve du théorème d'uniformisation de Candel, ainsi que celle du Théorème \ref{prescription_courboubouille}. Dans la section \ref{s.presctiption}, nous montrons comment prescrire la courbure de disques riemanniens et étudions la dépendance des solutions de l'équation de courbure en fonction des données de l'équation. Enfin en appendice, nous donnons des preuves élémentaires de deux résultats déjà connus, mais fondamentaux pour notre étude. Il s'agit de la propriété de Hausdorff, à isométrie près, de la topologie de Cheeger-Gromov (Théorème \ref{CGHausdorff}) et de l'existence de métrique laminée de courbure négative (qui soit indépendante du théorème d'uniformisation simultanée) (Théorème \ref{negcurvedmetrics}).

\paragraph{\textbf{Remerciements}} C'est avec plaisir que nous remercions Fernando Alcalde Cuesta pour l'intérêt porté à ce travail, ainsi que pour des discussions sur la géométrie transverse des feuilletages. Nous remercions Jes\'us \'Alvarez Lopez, Pablo Lessa et Rafael Potrie pour d'intéressantes discussions. Enfin nous remercions Richard Mu\~niz et Alberto Verjovsky pour avoir partagé leur manuscrit avec nous. Enfin, c'est un plaisir de remercier le rapporteur pour ses remarques pertinentes qui ont permis d'améliorer le texte.  Durant la préparation de ce travail, S.A. a bénéficié de financement de la part de la ANII (via les projets FCE\_3\_2018\_1\_148740 et Mathamsud RGSD) et de la CSIC (via le projet I+D 389).

\section{Convergence au sens de Cheeger-Gromov et continuité de l'uniformisation}\label{s.CG_unif}

Le but de cette section est d'introduire la topologie de Cheeger-Gromov, ainsi que ses propriétés fondamentales que nous utiliserons dans la suite. Nous étudierons ensuite la continuité dans cette topologie des applications d'uniformisation de surfaces hyperboliques pointées complètes.

\subsection{Convergence au sens de Cheeger-Gromov}
\label{ss_chigouillegrougrouile}

\paragraph{\textbf{Définition }} Une \emph{variété riemannienne pointée} $(M,g,p)$ est la donnée d'une variété riemannienne complète $(M,g)$ et d'un point $p$ de $M$. Nous dirons qu'une suite $(M_n,g_n,p_n)_{n\in\N}$ de variétés riemanniennes pointées converge vers la variété riemannienne  pointée $(M,g,p)$ \emph{au sens de Cheeger-Gromov} s'il existe une suite d'applications $F_n:M_n\to M$ telles que
\begin{enumerate}
\item pour tout $n\in\N$, $F(p)=p_n$;

\noindent et telles que pour tout ouvert relativement compact $U\dans M$ il existe un entier $n_0>0$ vérifiant
\item pour tout $n\geq n_0$, la restriction de $F_n$ à $U$ est un difféomorphisme sur son image;
\item la suite des métriques $(F_n^{\ast}g_n)_{n\geq n_0}$ converge vers $g$ en restriction à $U$ au sens $\Ciloc$.
\end{enumerate}
Nous appellerons la suite $(F_n)$ une \emph{suite d'applications de convergence} de $(M_n,g_n,p_n)$ vers $(M,g,p)$. En général ce mode de convergence ne respecte pas les propriétés topologiques. Par exemple une limite de variétés simplement connexes, ou compactes, pourrait très bien ne pas l'être.

\paragraph{\textbf{\'Equivalence des applications de convergence}} Les suites d'applications de convergence ne sont bien sûr pas uniques. Mais deux suites $(F_n)$ et $(G_n)$ d'applications de convergence de $(M_n,g_n,p_n)$ vers $(M,g,p)$ sont \emph{\'equivalentes} au sens suivant. Il existe une suite $(\Phi_n)$ d'applications de $M$ vers elle-même, fixant $p$, ainsi qu'une \emph{isométrie} $\Phi:(M,p)\to(M,p)$ vérifiant que pour tout ouvert relativement compact  $U\dans M$ il existe un entier $n_1=n_1(U)>0$ tel que pour tout $n\geq n_1$
\begin{enumerate}
\item la restriction de $\Phi_n$ à $U$ est un difféomorphisme sur son image;
\item $G_n=F_n\circ \Phi_n$ en restriction à $U$;
\item $(\Phi_n)_{n\geq n_1}$ possède une sous-suite convergeant vers $\Phi$ en restriction à $U$ au sens $C^{\infty}_{\text{loc}}$.
\end{enumerate}
Cette équivalence est une conséquence du lemme suivant, que nous utiliserons plusieurs fois dans la suite.

\begin{lemma}
\label{equivalence_subtile}
Soient $M$ et $N$ deux variétés lisses de même dimension. Soient $(g_n)$ et $(h_n)$ deux suites de métriques riemanniennes sur $M$ et $N$ respectivement, qui convergent vers deux métriques $g$ et $h$ dans la topologie  $\Ciloc$. Soit $K\dans N$ une partie compacte et $\Phi_n:M\to K$ une suite d'applications lisses telles que pour tout $n\in\N$, $\Phi_n^\ast h_n=g_n$. Alors la suite $(\Phi_n)$ possède une sous-suite convergeant vers une application $\Phi:M\to K$ au sens $\Ciloc$ vérifiant de plus $\Phi^\ast h=g$.
\end{lemma}

Nous verrons en appendice la preuve de ce lemme ainsi que son corollaire: la propriété de Hausdorff (à isométrie près) de ce mode de convergence.

\begin{theorem}[Propriété de Hausdorff]\label{CGHausdorff} Soit $(M_n,g_n,p_n)$ une suite de variétés riemanniennes pointées qui converge vers $(M,g,p)$ et $(M',g',p')$ dans la topologie de Cheeger-Gromov. Alors il existe un difféomorphisme $\Phi:M\to M'$ tel que $\Phi^\ast g'=g$ et $\Phi(p)=p'$.
\end{theorem}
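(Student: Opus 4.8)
The plan is to construct the isometry $\Phi$ as a sublimit of the transition maps between the two families of convergence maps. Fix convergence maps $(F_n)$, $F_n\colon M\to M_n$, of $(M_n,g_n,p_n)$ towards $(M,g,p)$, and $(F_n')$, $F_n'\colon M'\to M_n$, towards $(M',g',p')$; thus $F_n(p)=p_n$, $F_n'(p')=p_n$, on every relatively compact open set the maps are eventually diffeomorphisms onto their images, and $F_n^\ast g_n\to g$, $(F_n')^\ast g_n\to g'$ in $\Ciloc$ on such sets. Given a relatively compact open $U\dans M$, the goal is to make sense of $\Psi_n:=(F_n')^{-1}\circ F_n$ on $U$ for large $n$ and then pass to the limit.

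The first and main point is a distance estimate. Since $F_n(p)=p_n$ and $F_n^\ast g_n\to g$ uniformly on relatively compact sets, there is $R=R(U)$ with $F_n(U)\dans B_{g_n}(p_n,R)$ for $n$ large. Conversely, again because $(F_n')^\ast g_n\to g'$ uniformly on the relatively compact ball $B_{g'}(p',R')$, one shows that $F_n'\bigl(B_{g'}(p',R')\bigr)\supseteq B_{g_n}(p_n,R'/2)$ for $n$ large: if some $g_n$-geodesic issued from $p_n$ of length $<R'/2$ left the (open) image, pulling it back by $(F_n')^{-1}$ would produce a curve issued from $p'$ of almost the same length reaching the sphere $\{d_{g'}(p',\cdot)=R'\}$, which is impossible once $n$ is large. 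Taking $R'=2R+1$, the composition $\Psi_n\colon U\to B_{g'}(p',R')\dans M'$ is therefore a well-defined diffeomorphism onto its image for $n$ large, and a direct computation gives $\Psi_n^\ast\bigl((F_n')^\ast g_n\bigr)=F_n^\ast g_n$ on $U$.

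Now apply Lemma \ref{equivalence_subtile} with the manifold $U$, the target $B_{g'}(p',R'+1)$, the two sequences of metrics $F_n^\ast g_n\to g$ and $(F_n')^\ast g_n\to g'$, the compact set $K=\overline{B_{g'}(p',R')}$ and the maps $\Psi_n\colon U\to K$: a subsequence of $(\Psi_n)$ converges in $\Ciloc$ on $U$ to a map $\Psi^U\colon U\to M'$ with $(\Psi^U)^\ast g'=g$, i.e.\ a local isometry. Exhausting $M$ by relatively compact open sets $U_1\dans U_2\dans\cdots$ containing $p$ and taking a diagonal subsequence yields a single local isometry $\Psi\colon M\to M'$ with $\Psi^\ast g'=g$ and $\Psi(p)=\lim_n(F_n')^{-1}(p_n)=p'$. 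Running the same argument with the roles of $(M,g,p)$ and $(M',g',p')$ exchanged, and refining to a common subsequence, produces a local isometry $\Psi'\colon M'\to M$ with $\Psi'(p')=p$, obtained as a sublimit of $\Psi_n':=(F_n)^{-1}\circ F_n'$.

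It remains to see that $\Psi$ is bijective with inverse $\Psi'$. On each relatively compact $U$, once $n$ is large all the compositions below are defined and $\Psi_n'\circ\Psi_n=\mathrm{id}_U$ and, symmetrically, $\Psi_n\circ\Psi_n'=\mathrm{id}$ on the corresponding set of $M'$; since $\Psi_n\to\Psi$ and $\Psi_n'\to\Psi'$ in $\Ciloc$ while the images of relatively compact sets remain in fixed compact sets, these relations pass to the limit, giving $\Psi'\circ\Psi=\mathrm{id}_M$ and $\Psi\circ\Psi'=\mathrm{id}_{M'}$. Hence $\Psi\colon M\to M'$ is a diffeomorphism; being a local isometry it satisfies $\Psi^\ast g'=g$, and $\Psi(p)=p'$, which is the assertion. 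The only genuinely delicate step is the distance estimate of the second paragraph — controlling that the merely approximately isometric convergence maps are actually onto large metric balls — since once the transition maps $\Psi_n$ are defined on a fixed relatively compact set for all large $n$, Lemma \ref{equivalence_subtile} together with the routine diagonal extraction and passage to the limit in compositions finishes the proof.
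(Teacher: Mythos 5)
Your proposal is correct and follows essentially the same route as the paper: form the transition maps $(F_n')^{-1}\circ F_n$ on relatively compact sets, check they are defined there for large $n$, apply Lemma \ref{equivalence_subtile} to extract a $\Ciloc$-limit which is a pointed local isometry, and diagonalize over an exhaustion of $M$. The only (harmless) variations are that you spell out the ball-covering estimate which the paper dispatches with ``quitte à augmenter $n_1$'', you feed the lemma the varying target metrics $(F_n')^{\ast}g_n$ via the exact identity $\Psi_n^{\ast}((F_n')^{\ast}g_n)=F_n^{\ast}g_n$ instead of the paper's chain of norm estimates showing $((F_n')^{-1}\circ F_n)^{\ast}g'\to g$, and you obtain bijectivity by also passing $F_n^{-1}\circ F_n'$ to the limit and letting the identities $\Psi_n'\circ\Psi_n=\mathrm{id}$ survive, where the paper instead invokes completeness to get surjectivity of the limiting local isometry.
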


\subsection{Notions de convergence associées}
\label{ss.convergence}
\subsubsection{Convergence de suites d'applications au sens de Cheeger-Gromov}\label{sss.convergence(CG)}
Nous allons à présent définir une notion de convergence d'applications qui est cohérente avec la topologie de Cheeger-Gromov. Plus précisément, soit $(\Omega,h)$ une variété riemannienne et $q\in\Omega$. Soit $(M_n,g_n,p_n)$ une suite de variétés riemanniennes pointées convergeant vers $(M,g,p)$ au sens de Cheeger-Gromov. Nous prendrons une suite d'applications lisses $f_n:(\Omega,q)\to(M_n,p_n)$ et une application $f:(\Omega,q)\to(M,p)$. Bien sûr, si l'on veut définir la convergence de la suite $(f_n)$ vers $f$, il y a une difficulté venant du fait que les espaces d'arrivée de ces applications diffèrent.

\begin{defn}[Convergence d'applications au sens de Cheeger-Gromov]\label{def_appli_CG}
Nous disons que la suite d'applications lisses $f_n:(\Omega,q)\to(M_n,p_n)$ \emph{converge vers $f:(\Omega,q)\to(M,p)$ au sens de Cheeger-Gromov} s'il existe une suite d'applications de convergence $F_n:(M,g,p)\to (M_n,g_n,p_n)$, telle que pour tout ouvert relativement compact $O\dans M$, il existe un ouvert relativement compact $U\dans M$ ainsi qu'un entier $n_0>0$ tels que
\begin{enumerate}
\item $\overline{O}\dans U$;
\item pour tout $n\geq n_0$ la restriction de $F_n$ à $U$ est un difféomorphisme sur son image;
\item $f_n(f^{-1}(\overline{O}))\dans F_n(U)$;
\item la suite des restrictions $(F_n^{-1}\circ f_n|_{f^{-1}(O)})_{n\geq n_0}$ converge vers $f$ dans la topologie $C^\infty_{\text{loc}}$.
\end{enumerate}
\end{defn}

Nous résumons la définition précédente à l'aide du diagramme (non commutatif) suivant

\begin{equation*}
\begin{tikzcd}[row sep=huge]
f^{-1}(O)\dans(\Omega,q) \arrow[r,"f"] \arrow[d,swap,"f_n"] &
O\dans U\dans(M,p) \arrow[dl,"F_n"] &
\\
F_n(U)\dans(M_n,p_n) & & 
\end{tikzcd}
\end{equation*}

\begin{rem}
En utilisant la notion d'équivalence introduite au paragraphe précédent, il est facile de voir que la limite est unique à isométrie près: si $f_n:(\Omega,q)\to(M_n,p_n)$ converge vers $f:(\Omega,q)\to(M,p)$ et $f':(\Omega,q)\to (M',p')$ alors il existe une isométrie $\Phi:(M,g,p)\to (M',g',p')$ tel que $f'=\Phi\circ f$.
\end{rem}
\subsubsection{Suites de variétés peintes}\label{ss.peintures}

Une variété riemannienne pointée et \emph{peinte} est la donnée d'un quadruplet $(M,g,p,e)$, où $(M,g,p)$ est une variété riemannienne pointée et $e:M\to X$ est une fonction continue de $M$ vers un espace métrisable $X$. Nous disons alors que $M$ est peinte dans $X$. La suite de variétés pointées et peintes $(M_n,g_n,p_n,e_n)$ converge vers la variété pointée et peinte $(M,g,p,e)$ au sens de Cheeger-Gromov si $(M_n,g_n,p_n)$ converge vers $(M,g,p)$ et si, de plus, $e_n\circ F_n$ converge vers $e$ au sens $C^0_{\text{loc}}$ pour une certaine suite $(F_n)$ d'applications de convergence de $(M_n,g_n,p_n)$ vers $(M,g,p)$. Nous verrons à la section suivante que cette notion est particulièrement adaptée à l'étude des laminations.

\begin{rem}\label{rem.conv_appl_peint} Supposons que la suite $(M_n,g_n,p_n,e_n)$ converge vers $(M,g,p,e)$ et que la suite d'applications $f_n:(\Omega,q)\to(M_n,p_n)$ converge vers $f:(\Omega,q)\to(M,p)$ au sens de Cheeger-Gromov. L'égalité suivante $e_n\circ f_n=(e_n\circ F_n)\circ(F_n^{-1}\circ f_n)$ a lieu en restriction à tout compact, pour $n$ suffisamment grand. Nous en déduisons que $e\circ f_n$ converge vers $e\circ f$ dans la topologie $C^0_{\text{loc}}$.

\end{rem}

\subsubsection{Continuité de familles de métriques}\label{sss.continuite_metriques}
Soit à présent $\cM=(M_\alpha,g_\alpha,p_\alpha,e_\alpha)_{\alpha\in A}$ une famille de variétés riemanniennes pointées et peintes dans un espace métrisable $X$ qui soit compacte au sens de Cheeger-Gromov. Soit $h=(h_\alpha)_{\alpha\in A}$ une famille de métriques riemanniennes sur les éléments de $\cM$.

Nous disons que cette famille est \emph{continue au sens de Cheeger-Gromov} si pour toute suite $(M_{\alpha_n},g_{\alpha_n},p_{\alpha_n},e_{\alpha_n})$ d'éléments de $\cM$ convergeant vers $(M_\alpha,g_\alpha,p_\alpha,e_\alpha)\in\cM$, il existe une suite d'applications de convergence $(F_n)$ de $(M_{\alpha_n},g_{\alpha_n},p_{\alpha_n})$ vers $(M_\alpha,g_\alpha,p_\alpha)$ telle que
\begin{itemize}
\item $\lim_{n\to\infty} e_{\alpha_n}\circ F_n=e_\alpha$ au sens $C^0_{\text loc}$;
\item $\lim_{n\to\infty}F_n^\ast h_{\alpha_n}=h_\alpha$ au sens $\Ciloc$.
\end{itemize}

\begin{ex}
Cette notion de convergence est adaptée aux variétés \emph{peintes}. Nous allons illustrer la subtilité de cette définition par un exemple. Soit $X=\D\times(\{1/n;n\geq 1\}\cup\{0\})$ et la famille de variétés peintes $(\D,h_\D,0,e_n)$, telle que $e_n(z)=(z,1/n)$ pour $n\geq 1$ et $e_0(z)=(z,0)$ et où $h_\D$ représente la métrique de Poincaré sur $\D$. Soit $g$ une métrique riemannienne complète sur $\D$ telle qu'il n'y ait pas d'isométrie fixant $0$ autre que l'identité et soit $R$, une rotation quelconque, différente de l'identité. Soit $R_n$ une suite de rotations convergeant vers $R$ et $g_n$, une suite de métriques définies par $g_n=g$ pour $n\geq 1$ et $g_0=R^\ast g$. Alors
\begin{enumerate}
\item La suite $(R_n)$ est une suite d'applications de convergence de la variété riemannienne pointée $(D,h_\D,0)$ vers elle-même et $R_n^\ast g_n\to g_0$ quand $n$ tend vers l'infini au sens $\Ciloc$.
\item Il n'y a aucune suite de convergence $(F_n)$ de la variété riemannienne pointée $(D,h_\D,0)$ vers elle-même (nous utilisons ici le point ci-dessus, le Lemme \ref{equivalence_subtile} et le fait qu'il n'y ait pas d'isométrie non-triviale de $g$  fixant l'origine) telle que $F_n^\ast g_n\to g_0$ au sens $\Ciloc$ et $e_n\circ F_n\to e$.
\end{enumerate}
Ainsi, en oubliant la peinture, on pourrait dire que la famille $(g_n)$ est continue dans la topologie de Cheeger-Gromov mais elle ne l'est pas au sens où nous l'avons définie ci-dessus.
\end{ex}

\subsection{Continuité des applications d'uniformisation}\label{ss.cont_unif} 
Nous terminons cette section par un résultat de continuité des applications d'uniformisation de surfaces hyperboliques pointées: il s'agit du Théorème \ref{compact_unif}. Nous prouverons au \S\ref{ss.uniformouille} que ce résultat entraîne le théorème d'uniformisation simultanée de  Candel pour les laminations par surfaces hyperboliques.

Soit $(\D,h_\D)$ le disque unité muni de la métrique de Poincar\'e. Soit $\cM$ une famille de surfaces riemanniennes complètes pointées vérifiant trois conditions.

\begin{enumerate}
\item Les élements de $\cM$ sont conformément hyperboliques.
\item $\cM$ est compacte dans la topologie de Cheeger-Gromov.
\item $\cM$ est invariante par changement de point de base: si $(M,g,p)\in\cM$ et $q\in M$ alors $(M,g,q)\in\cM$.
\end{enumerate}
 
Par le théorème d'uniformisation, pour toute surface riemannienne pointée $(M,g,p)$ appartenant à $\cM$, il existe un revêtement holomorphe $\Pi:\Bbb{D}\rightarrow M$ tel que $\Pi(0)=p$. Un tel revêtement est uniquement déterminé à composition près par une rotation de $\D$, et sera appelé une \emph{application d'uniformisation} de $(M,g,p)$.

Dans cette section nous prouvons que l'application d'uniformisation varie continûment sur $\cM$. Plus précisément,
 
\begin{theorem}[Continuité de l'uniformisation]\label{compact_unif}
Soit $(M_n,g_n,p_n)$ une suite de surfaces pointées de $\cM$ convergeant vers $(M,g,p)$ au sens de Cheeger-Gromov. Pour tout $n\in\N$, considérons une application d'uniformisation $\Pi_n:\D\rightarrow M_n$. Alors il existe une application d'uniformisation $\Pi:\D\rightarrow M$ vers laquelle converge une sous-suite de $(\Pi_n)$ au sens de Cheeger-Gromov.
\end{theorem}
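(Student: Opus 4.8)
The plan is to exploit the compactness of $\cM$ not only at the base points $p_n$ but uniformly over all base points, using condition (3), together with the uniform geometric control that compactness in the Cheeger--Gromov topology provides. First I would fix a sequence of convergence maps $F_n:(M,g,p)\to(M_n,g_n,p_n)$ and, for each $n$, the uniformisation map $\Pi_n:\D\to M_n$ with $\Pi_n(0)=p_n$. The key observation is that the pulled-back metrics $\Pi_n^\ast g_n$ on $\D$ are all conformal to $h_\D$, say $\Pi_n^\ast g_n = e^{2u_n}h_\D$, and I claim the conformal factors $u_n$ are controlled uniformly on compact subsets of $\D$. This control comes from the fact that $(M_n,g_n,q)$ ranges in the compact family $\cM$ for every $q\in M_n$: compactness in the Cheeger--Gromov topology forces a uniform (two-sided) bound on injectivity radius and on all derivatives of curvature at every point of every $M_n$, hence on the whole geometry of $M_n$ at bounded scale. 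Pulling this back through the hyperbolic covering map and using the Schwarz--Koebe-type distortion estimates for the uniformising map of a complete surface with bounded geometry, one gets that $\|u_n\|_{C^k(K)}$ is bounded for every compact $K\dans\D$ and every $k$.

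Next I would extract limits. By the uniform $C^k$ bounds and Arzelà--Ascoli, after passing to a subsequence, $u_n\to u$ in $\Ciloc$ on $\D$, so $\Pi_n^\ast g_n\to e^{2u}h_\D =: \tilde g$ in $\Ciloc$. Now I want to realise $\Pi_n$ as a sequence of maps converging in the Cheeger--Gromov sense of Definition \ref{def_appli_CG}. For this I consider $\Phi_n := F_n^{-1}\circ\Pi_n$, which is defined (as a map into $M$) on larger and larger compact subsets of $\D$ once the convergence maps $F_n$ become diffeomorphisms onto neighbourhoods exhausting $M$; note $\Phi_n(0)=p$. On a fixed compact $K\dans\D$ these are maps into a fixed compact set of $M$ satisfying $\Phi_n^\ast (F_n^\ast g_n)= \Pi_n^\ast g_n = e^{2u_n}h_\D$, and $F_n^\ast g_n\to g$ while $e^{2u_n}h_\D\to \tilde g$, both in $\Ciloc$. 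Lemma \ref{equivalence_subtile} then applies (with the ``ambient'' manifold $M$ playing the role of $N$, after checking the images stay in a compact set — which follows since $\Phi_n$ is $1$-Lipschitz up to the uniform conformal bound and $\Phi_n(0)=p$): a subsequence of $\Phi_n$ converges in $\Ciloc$ to some $\Pi:\D\to M$ with $\Pi^\ast g = \tilde g = e^{2u}h_\D$. By construction this exactly says $\Pi_n\to\Pi$ in the sense of Definition \ref{def_appli_CG}.

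It remains to check that the limit $\Pi$ is itself a uniformisation map of $(M,g,p)$. It satisfies $\Pi(0)=p$ and $\Pi^\ast g = e^{2u}h_\D$, so $\Pi$ is conformal (holomorphic, for the right orientation) and a local diffeomorphism; since $h_\D$ is complete and $e^{2u}$ is bounded below on compacts by the uniform estimates, $\Pi$ is in fact a local isometry from a complete metric, hence a covering map onto its image, and the image is open and closed in the connected surface $M$, so $\Pi$ is onto. Because the elements of $\cM$ are conformally hyperbolic, $M$ is covered by $\D$, and a holomorphic covering $\D\to M$ sending $0$ to $p$ is precisely an application d'uniformisation; uniqueness up to rotation is standard. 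The main obstacle I anticipate is the very first step: establishing the \emph{uniform} control of the conformal factors $u_n$ on compact subsets of $\D$, i.e. the quantitative Schwarz lemma comparing the hyperbolic metric of $M_n$ with $g_n$, uniformly over the family $\cM$. This is where the full strength of Cheeger--Gromov compactness (uniform bounds on injectivity radius and curvature derivatives at \emph{all} points, via the base-point-invariance hypothesis (3)) must be converted into an a priori estimate for the elliptic curvature equation $\Delta_{h_\D} u = \kappa_0\, e^{2u} - 1$ that $u_n$ solves — the kind of estimate developed in the later section \S\ref{ss.prescription_disque}. The rest is a relatively routine compactness-and-limit argument once that estimate is in hand.
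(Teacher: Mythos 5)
Your extraction-and-identification scheme (uniform $C^k$ bounds on the conformal factors, Arzel\`a--Ascoli, Lemma \ref{equivalence_subtile} applied to $F_n^{-1}\circ\Pi_n$, then the covering property of the limit) is essentially the paper's concluding step, and that part is indeed routine. The genuine content of the theorem is precisely the step you defer to the end as ``the main obstacle'': the uniform two-sided control of $\|D\Pi_n\|$ with respect to $h_\D$, i.e.\ of the $u_n$, and the routes you suggest for it do not work. Bounded geometry of the $M_n$ plus a Schwarz--Koebe type distortion estimate is not enough: the paper's surfaces-of-revolution example $(D_r,g_r)$ has uniformly bounded geometry, and the closure of the re-pointed family satisfies hypotheses (2) and (3), yet $\|D\Pi_r(0)\|\to\infty$; what fails there is only hypothesis (1) for the re-pointed limits (a Euclidean plane appears as a limit). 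So any correct proof of the upper bound must use, quantitatively, that \emph{every} Cheeger--Gromov limit of re-pointed elements of $\cM$ is conformally hyperbolic; the paper does this via a renormalization of $\Pi_n$ at a quasi-maximum of $\|D\Pi_n\|$ (Lemme \ref{quasi-max}), producing in the limit a nonconstant bounded-derivative holomorphic map $\C\to M$ with $(M,g,q)\in\cM$ hyperbolic, contradicting Liouville (Lemme \ref{l.upper_bound}). Your alternative route, the a priori estimates of \S\ref{ss.prescription_disque} for the equation satisfied by $u_n$, also fails in general: Lemme \ref{bornouille} and Th\'eor\`eme \ref{th_bounds_elliptic} require the prescribed curvature to be pinched between two negative constants, whereas the curvature of $g_n$ is unconstrained for $\cM$ as in the theorem --- in the application to Candel's theorem $g$ is an arbitrary laminated metric in the conformal class, whose curvature can change sign.

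There is a second, independent gap at the non-degeneracy end. You assert that $e^{2u}$ is ``bounded below on compacts'', which is automatic for any smooth positive function and gives nothing: to conclude that $\Pi$ is a covering you need $\Pi^\ast g=e^{2u}h_\D$ to be \emph{complete}, hence a lower bound for $u$ uniform over all of $\D$, obtained from a bound $\|D\Pi_n\|\geq 1/C$ uniform in $n$ and in the point. This is Lemme \ref{l.lower_bound}, and the paper proves it by an argument of a completely different nature: if $\|D\Pi_n(0)\|\to 0$, one passes to universal covers, shows by a degree argument that the limit map is constant, and then derives a contradiction by comparing conformal moduli of annuli, $M(A)\leq M(\D\setminus\D_\rho)\to 0$. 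Nothing in your proposal addresses this step, and it does not follow from bounded geometry or from distortion estimates alone; as written, your final paragraph silently assumes it.
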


\begin{rem}\label{rem_unif-peinture}
Supposons de plus que les variétés de la famille $\cM$ soient peintes dans un certain espace métrisable $X$. Par la Remarque \ref{rem.conv_appl_peint},  si $(M_n,g_n,p_n,e_n)$ converge vers $(M,g,p,e)$ au sens de Cheeger-Gromov, alors il existe une application d'uniformisation $\Pi:\D\to M$ telle que $e_n\circ\Pi_n$ possède une sous-suite qui converge vers $e\circ\Pi$ au sens $C^0_{\text loc}$.
\end{rem}

Considérons à présent la famille $\widetilde{\cM}$ des revêtements universels d'éléments de $\cM$. Comme nous l'avons dit plus haut, la limite au sens de Cheeger-Gromov d'une suite de surfaces simplement connexes n'est pas nécessairement simplement connexe. Ainsi, la compacité de $\cM$ n'implique pas \emph{forcément} celle de $\widetilde{\cM}$. Cependant, le Théorème \ref{compact_unif} fournit immédiatement le résultat suivant.
\begin{corollary}\label{compact_univ_covers}
$\widetilde{\cM}$ est compacte pour la topologie de Cheeger-Gromov.
\end{corollary}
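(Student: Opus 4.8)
The plan is to deduce the compactness of $\widetilde{\cM}$ directly from Theorem \ref{compact_unif}, using that each element of $\widetilde{\cM}$ is, as a pointed Riemannian manifold, isometric to $(\D, \Pi^\ast g, 0)$ for a suitable uniformisation map $\Pi \colon \D \to M$. First I would fix notation: given $(\widetilde{M}, \widetilde{g}, \widetilde{p}) \in \widetilde{\cM}$, there is $(M, g, p) \in \cM$ with $\widetilde{M}$ the universal cover of $M$, and the deck-transformation-invariant lifted metric makes the covering map a local isometry; composing with the (holomorphic, hence up to a rotation unique) biholomorphism $\D \to \widetilde{M}$ sending $0$ to $\widetilde{p}$, we may and do identify $(\widetilde{M}, \widetilde{g}, \widetilde{p})$ with $(\D, \Pi^\ast h, 0)$ where $\Pi \colon \D \to M$ is an uniformisation map of $(M, g, p)$ and $h = g$ is shorthand for the metric on $M$. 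Thus $\widetilde{\cM}$ is, up to isometry, exactly the family of pointed Riemannian surfaces $(\D, \Pi^\ast g, 0)$ with $(M,g,p)$ ranging over $\cM$ and $\Pi$ ranging over uniformisation maps of $(M,g,p)$.

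Next I would take an arbitrary sequence in $\widetilde{\cM}$, write it (after the identification above) as $(\D, \Pi_n^\ast g_n, 0)$ with $(M_n, g_n, p_n) \in \cM$ and $\Pi_n$ an uniformisation map. By compactness of $\cM$, after passing to a subsequence we may assume $(M_n, g_n, p_n) \to (M, g, p)$ in the Cheeger–Gromov topology for some $(M,g,p) \in \cM$. Theorem \ref{compact_unif} then furnishes, after a further subsequence, an uniformisation map $\Pi \colon \D \to M$ of $(M,g,p)$ such that $\Pi_n \to \Pi$ in the sense of Cheeger–Gromov (Definition \ref{def_appli_CG}), with $(\Omega, q) = (\D, 0)$. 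I would then argue that this convergence of maps forces $\Pi_n^\ast g_n \to \Pi^\ast g$ in $\Ciloc$ on $\D$, which is exactly the statement that $(\D, \Pi_n^\ast g_n, 0) \to (\D, \Pi^\ast g, 0)$ in the Cheeger–Gromov topology (the convergence maps here being the identity of $\D$, since $F_n^{-1}\circ\Pi_n \to \Pi$ and $\Pi_n$ is itself a local diffeomorphism). Since $(\D, \Pi^\ast g, 0) \in \widetilde{\cM}$, this produces the required convergent subsequence and proves compactness.

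The one point that needs care — and I expect it to be the main (if modest) obstacle — is the passage from ``$\Pi_n \to \Pi$ in the sense of Cheeger–Gromov'' to ``$\Pi_n^\ast g_n \to \Pi^\ast g$ in $\Ciloc$''. Unwinding Definition \ref{def_appli_CG}, one has convergence maps $F_n \colon (M,g,p) \to (M_n,g_n,p_n)$ with $F_n^\ast g_n \to g$ in $\Ciloc$ and, on each relatively compact open $O \subset M$ (here $O$ plays the role of the target, which after the identification lives downstairs in $M$, while the source of $\Pi$ is $\D$), the maps $F_n^{-1}\circ \Pi_n|_{\Pi^{-1}(O)}$ converge to $\Pi|_{\Pi^{-1}(O)}$ in $\Ciloc$. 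Then on any relatively compact open $W \subset \D$ one writes, for $n$ large, $\Pi_n^\ast g_n = (F_n^{-1}\circ \Pi_n)^\ast (F_n^\ast g_n)$ on $W$, and both factors converge in $\Ciloc$ (the first to $\Pi$, the second to $g$, on the relevant compact sets), so the pullback metrics converge to $\Pi^\ast g$; here one uses that pullback of a $\Ciloc$-convergent sequence of metrics by a $\Ciloc$-convergent sequence of maps is $\Ciloc$-convergent, exactly as in the hypotheses of Lemma \ref{equivalence_subtile}. This also shows the limit $(\D, \Pi^\ast g, 0)$ does not depend on the choices up to isometry, which is consistent with Theorem \ref{CGHausdorff}.
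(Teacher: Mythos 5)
Your proposal is correct, and it rests on the same key ingredient as the paper's proof: representing each universal cover, up to isometry, as $(\D,\Pi_n^\ast g_n,0)$ for a uniformisation map $\Pi_n$, and invoking Theorem \ref{compact_unif} to obtain $\Pi_n^\ast g_n\to\Pi^\ast g$ in $\Ciloc$, so that the identity of $\D$ serves as a sequence of convergence maps. The bookkeeping is genuinely different, though. The paper first extracts a Cheeger--Gromov limit $(M,g,p)$ of the sequence of covers themselves (their uniformly bounded geometry makes $\widetilde{\cM}$ precompact), applies Theorem \ref{compact_unif} to that sequence, and then invokes the Hausdorff property (Theorem \ref{CGHausdorff}) to identify the a priori limit with $(\D,\Pi^\ast g,0)$. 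You instead descend to the base sequence in $\cM$, extract a convergent subsequence there by compactness of $\cM$, and apply Theorem \ref{compact_unif} to that sequence --- which matches the hypotheses of its statement verbatim, since the sequence lies in $\cM$ --- thereby constructing the convergent subsequence of covers directly, with limit the universal cover of an element of $\cM$; this dispenses both with the precompactness step for $\widetilde{\cM}$ and with Theorem \ref{CGHausdorff}. You also make explicit the implication from ``$\Pi_n\to\Pi$ in the Cheeger--Gromov sense'' to ``$\Pi_n^\ast g_n\to\Pi^\ast g$ in $\Ciloc$'', which the paper treats as immediate; your unwinding of Definition \ref{def_appli_CG} (choose $O$ relatively compact containing $\Pi(\overline{W})$, write $\Pi_n^\ast g_n=(F_n^{-1}\circ\Pi_n)^\ast(F_n^\ast g_n)$ on $W$ for $n$ large, and pass to the limit in both factors as in the hypotheses of Lemma \ref{equivalence_subtile}) is accurate. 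Each version buys something: the paper's is shorter because it delegates the identification of the limit to the general Hausdorff theorem, while yours is more self-contained at this point and applies the continuity theorem strictly within its stated framework.
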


\begin{proof}[Preuve] Notons tout d'abord que, puisque $\cM$ est compacte au sens de Cheeger-Gromov, ses éléments sont à géométrie uniformément bornée. Il en va donc de même pour ceux de $\widetilde{\cM}$. Il en découle que $\widetilde{\cM}$ est précompact: toute suite d'éléments de $\widetilde{\cM}$ possède une sous-suite convergeant au sens de Cheeger-Gromov. Nous renvoyons à \cite{Ch,Gr,Lessa,Pet} pour ce fait.

Soit $(M_n,g_n,p_n)$ une suite de surfaces pointées appartenant à $\widetilde{\cM}$. Supposons qu'elle converge vers la surface pointée
 $(M,g,p)$. Pour tout $n\in\N$, soit $\Pi_n:(\D,0)\rightarrow (M_n,p_n)$ une application d'uniformisation. Pour tout $n\in\N$,
  $M_n$ s'identifie donc à $(\D,\Pi_n^\ast g_n,0)$. En utilisant le Théorème \ref{compact_unif},
   nous pouvons supposer qu'il existe une application d'uniformisation $\Pi:(\D,0)\to(M,p)$ telle que $(\Pi_n^
   \ast g_n)$ converge vers $\Pi^\ast g$ dans la topologie $\Ciloc$. Ainsi, 
    $(\D,\Pi_n^\ast g_n,0)_{n\in\Bbb{N}}$ converge vers $(\D,\Pi^\ast g,0)$ au sens de
     Cheeger-Gromov, les applications de convergence étant tout simplement données par l'identité. Nous concluons en utilisant la propriété de Hausdorff de la topologie de Cheeger-Gromov (cf. Théorème \ref{CGHausdorff}).
\end{proof}

\subsection{Préliminaires}

Nous commençons par introduire quelques outils et concepts qui seront utiles lors de la preuve du Théorème \ref{compact_unif}.

\paragraph{\textbf{Module conforme}} Un anneau conforme est une surface de Riemann ouverte et connexe dont le groupe fondamental est cyclique. Par uniformisation, un anneau conforme $A$ est ou bien de type parabolique, auquel cas il est conformément équivalent au produit $
S^1\times\R$, ou bien de type hyperbolique, auquel cas il est conformément équivalent au produit $S^1\times]0,M[$, où $M\in]0,\infty]$ est uniquement déterminé. Ce nombre $M:=M(A)$ s'appelle le \emph{module conforme} de $A$.

Il est bien connu que le module conforme est lié à la longueur extrémale maximale que nous définissons à présent. Soit $A$ un anneau conforme. Soit $\Gamma(A)$ l'ensemble des lacets continus et essentiels (c'est-à-dire homotopiquement non-triviaux) sur $A$. La \emph{longueur extrémale} de $\Gamma(A)$ par rapport à une métrique conforme $g$ sur $A$ est définie comme
$$
L(A,g) := \inf_{\gamma\in\Gamma(A)}l(\gamma,g),
$$
où nous notons $l(\gamma,g)$ la longueur de $\gamma$ pour la métrique $g$ (éventuellement infinie, si $\gamma$ n'est pas rectifiable). 

\begin{lemma}\label{l_conf-modulus} Le module conforme de $A$ est alors donné par la formule
$$
\frac{1}{M(A)} = \sup_{g}\frac{L(A,g)^2}{2\pi},
$$
où la borne supérieure est prise parmi toutes les métriques conformes d'aire $1$.
\end{lemma}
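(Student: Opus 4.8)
The plan is to reduce everything to the flat model. Both sides of the claimed identity are conformal invariants of $A$: for $M(A)$ this is the definition, while for $\sup\{L(A,g)^2 : \Area(A,g)=1\}$ it follows from the fact that a conformal diffeomorphism between two annuli carries conformal metrics to conformal metrics, preserving areas, the lengths of loops, and the class of essential loops (and the quantity $L(A,g)^2/\Area(A,g)$ is in addition scale invariant). Hence, by the uniformisation statement recalled just above, I may assume that $A$ is the flat cylinder $\cC=(\R/2\pi\Z)\times(0,M)$ with $M=M(A)$, and carry out the computations in the flat product metric $g_0=d\theta^2+dt^2$ (whose area is $2\pi M$). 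It then suffices to prove the scale-invariant identity $\sup_g L(\cC,g)^2/\Area(\cC,g)=2\pi/M$, the supremum being over all conformal metrics $g$ on $\cC$; restricting to those of area $1$ recovers the statement.

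For the upper bound, write an arbitrary conformal metric on $\cC$ as $g=\rho^2 g_0$ with $\rho\geq 0$ measurable. For each $t\in(0,M)$ the horizontal circle $\gamma_t(\theta)=(\theta,t)$ is an essential loop, so
$$L(\cC,g)\ \leq\ \ell(\gamma_t,g)\ =\ \int_0^{2\pi}\rho(\theta,t)\,d\theta.$$
Integrating over $t\in(0,M)$ and then applying the Cauchy--Schwarz inequality on $\cC$ (using Fubini to justify the manipulation),
$$M\cdot L(\cC,g)\ \leq\ \iint_{\cC}\rho\,d\theta\,dt\ \leq\ \Big(\iint_{\cC}\rho^2\,d\theta\,dt\Big)^{1/2}(2\pi M)^{1/2}\ =\ \Area(\cC,g)^{1/2}\,(2\pi M)^{1/2}.$$
Squaring gives $L(\cC,g)^2\leq (2\pi/M)\,\Area(\cC,g)$, i.e. $L(A,g)^2/(2\pi)\leq 1/M(A)$ whenever $\Area(A,g)=1$.

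For the reverse inequality, note that equality in Cauchy--Schwarz forces $\rho$ to be essentially constant, which points to the flat metric itself (rescaled to unit area) as the extremiser. To compute $L(\cC,g_0)$, observe that the $\theta$-projection of any essential loop is a closed curve in $\R/2\pi\Z$ of nonzero winding number, hence of length at least $2\pi$; since the projection $\cC\to\R/2\pi\Z$ does not increase length, every essential loop has $g_0$-length at least $2\pi$, with equality for the core circles $\gamma_t$. Thus $L(\cC,g_0)=2\pi$, so the area-$1$ metric $g_*=(2\pi M)^{-1}g_0$ satisfies $L(\cC,g_*)^2=(2\pi)^2(2\pi M)^{-1}=2\pi/M$, that is $L(\cC,g_*)^2/(2\pi)=1/M$. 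Together with the previous paragraph, the supremum equals $1/M(A)$ and is attained by $g_*$, which is precisely the assertion of the lemma.

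There is no deep obstacle here. The two points deserving a little care are: (i) the identity $L(\cC,g_0)=2\pi$, i.e. that no essential loop is shorter than a core circle — this is exactly where the hypothesis that the loops be \emph{essential} is used, via the elementary bound relating the length of a closed curve on $\R/2\pi\Z$ to its winding number, together with the fact that length is non-increasing under orthogonal projection; and (ii) checking that the Cauchy--Schwarz step is legitimate for the full class of conformal metrics over which the supremum is taken, including merely measurable or degenerate densities, which is handled by working with $\rho\in L^2(\cC,g_0)$ and Fubini's theorem, no regularity of $\rho$ being needed.
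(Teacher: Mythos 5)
Your proof is correct, but it is worth noting that the paper does not actually prove this lemma: it is quoted as a known fact from extremal length theory, with the remarks that follow referring to Ahlfors' book (under a different normalisation) both for the formula and for the attainment/uniqueness of the extremal metric. What you supply is the classical Gr\"otzsch length--area argument, made self-contained: conformal invariance of $\sup L^2/\Area$ reduces to the flat cylinder $(\R/2\pi\Z)\times(0,M)$, the horizontal circles plus Fubini and Cauchy--Schwarz give $L(\cdot,g)^2\le (2\pi/M)\,\Area(\cdot,g)$ for every (even merely measurable) conformal density, and the winding-number/projection argument computes $L$ for the flat metric, which rescaled to unit area attains the bound. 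All of these steps are sound, and your argument in fact contains the germ of the uniqueness statement of Remark \ref{rem_mod} (equality forces $\rho$ constant), which is what the paper actually uses later in Lemma \ref{l_modulus}. The one point you leave untreated is the case $M(A)=\infty$ (the parabolic annulus $S^1\times\R$, or the hyperbolic one with $M=\infty$), where the statement reads $\sup=0$ and your rescaling $g_*=(2\pi M)^{-1}g_0$ is unavailable; this is harmless, since applying your Cauchy--Schwarz estimate to subcylinders $S^1\times I$ with $|I|=T$ gives $L^2\le 2\pi\,\Area/T$ for all $T$, hence $L=0$ for any finite-area conformal metric, but the case should be mentioned for the lemma as stated. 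Compared with the paper's choice to cite Ahlfors, your route buys a short, self-contained and normalisation-proof verification at the cost of half a page.
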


\begin{rem}
Il est à noter que la normalisation est différente de celle choisie par Ahlfors dans son livre \cite{Ahlfors2006}. Dans cette référence le module conforme de $A$ est donnée par $2\pi M(A)$.
\end{rem}

\begin{rem}\label{rem_mod} Il est connu (cf \cite{Ahlfors2006}) que lorsque le module conforme est fini, la borne supérieure est atteinte sur l'anneau $S^1\times]0,M[$ par la métrique suivante, et par elle seule
$$
g = \frac{1}{2\pi M}\big(d\theta^2 + dt^2\big).
$$
\end{rem}

\begin{lemma}\label{l_modulus}

Si $A_1\subseteq A_2$ sont deux anneaux conformes tels que $\Gamma(A_1)\subseteq\Gamma(A_2)$, alors $M(A_1)\leq M(A_2)$. De plus, si $M(A_2)$ est fini, l'égalité a lieu si et seulement si $A_1=A_2$.

\end{lemma}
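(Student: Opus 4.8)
The plan is to deduce this classical monotonicity (a form of the Grötzsch inequality) directly from the extremal-length characterisation of Lemma \ref{l_conf-modulus}, combined with the uniqueness of the extremal metric recorded in Remark \ref{rem_mod}. If $M(A_2)=\infty$ there is nothing to prove, so I would assume $M_2:=M(A_2)<\infty$ and take for $g_2$ the flat metric $\frac1{2\pi M_2}(d\theta^2+dt^2)$ on $A_2\cong S^1\times]0,M_2[$, which by Remark \ref{rem_mod} is the unique metric of area $1$ realising the supremum $\tfrac1{M_2}=\sup_g\tfrac{L(A_2,g)^2}{2\pi}$.

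The main step is to use $g_2$ as a test metric on $A_1$. Since an injective holomorphic map is open, $A_1$ is an open subset of $A_2$, hence $a:=\Area(A_1,g_2)\in\,]0,1]$. After renormalisation, $\frac1a g_2|_{A_1}$ has area $1$ on $A_1$, so it is admissible in the formula of Lemma \ref{l_conf-modulus} for $A_1$; using that lengths scale by $\sqrt\lambda$ under $g\mapsto\lambda g$, that every loop of $A_1$ has the same $g_2$-length in $A_1$ and in $A_2$, and that $\Gamma(A_1)\subseteq\Gamma(A_2)$ forces $L(A_1,g_2)\ge L(A_2,g_2)$, one obtains the chain
\begin{equation*}
\frac1{M(A_1)}\ \ge\ \frac{L(A_1,\frac1a g_2)^2}{2\pi}\ =\ \frac1a\,\frac{L(A_1,g_2)^2}{2\pi}\ \ge\ \frac1a\,\frac{L(A_2,g_2)^2}{2\pi}\ =\ \frac1{a\,M_2}\ \ge\ \frac1{M_2},
\end{equation*}
which is exactly $M(A_1)\le M(A_2)$.

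For the equality case, I would run this chain backwards assuming $M(A_1)=M(A_2)=:M$: the last inequality being an equality forces $a=1$, and the first one then forces $g_2|_{A_1}$, now of area $1$ on $A_1$, to realise the supremum for $A_1$. By the uniqueness clause of Remark \ref{rem_mod}, this identifies $A_1$ conformally with $S^1\times]0,M[$ in such a way that $g_2|_{A_1}$ becomes the standard flat metric $g_C$. The inclusion $A_1\hookrightarrow A_2$ then becomes a holomorphic, $\pi_1$-essential embedding $\iota$ of the flat cylinder $(C,g_C)$ into itself with $\iota^\ast g_C=g_C$; hence $|\iota'|\equiv 1$, the holomorphic function $\iota'$ is a constant of modulus $1$, $\iota$ is affine in the coordinate $\theta+it$, and essentiality together with the requirement that $\iota$ preserve the strip $0<t<M$ forces $\iota$ to be a rotation or a reflection of $C$ — in particular onto. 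Therefore $A_1=A_2$; the converse is trivial. The delicate point, which I would treat most carefully, is precisely this last step: extracting surjectivity of $\iota$ from the fact that it is a conformal, essential, metric-preserving self-embedding of the flat cylinder, for which the uniqueness part of Remark \ref{rem_mod} is the crucial input.
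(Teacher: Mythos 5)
Your proof is correct and follows essentially the same route as the paper: the same chain of extremal-length inequalities with the extremal metric of $A_2$ (rescaled by the area $a$ of $A_1$) used as a test metric on $A_1$, and the equality case likewise reduced, via the uniqueness clause of Remark \ref{rem_mod}, to the fact that the restricted metric is the extremal product metric on $A_1$. The only difference is in the last rigidity step, where you classify the resulting holomorphic isometric self-embedding of the flat cylinder as a rotation or reflection (hence onto), while the paper concludes more briefly by observing that $A_1$ is then foliated by the closed geodesics $S^1\times\{t\}$ of $g$, whence $A_1=A_2$.
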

%

\begin{proof}[Preuve] Il suffit de considérer le cas où $M(A_2)$ est fini. Nous pouvons supposer que $A_2=S^1\times ]0,M[$, où $M=M(A_2)$. Nous avons noté dans la Remarque \ref{rem_mod}, que la longueur extrémale maximale de $A_2$ est réalisée par la métrique 
$$
g = \frac{1}{2\pi M}(d\theta^2 + dt^2).
$$
Soit $a\leq 1$ l'aire de $A_1$ pour cette métrique. Puisque la restriction à $A_1$ de $g/a$ est conforme et d'aire $1$ nous avons,
$$
\frac{1}{M(A_1)} \geq \frac{L(A_1,g/a)^2}{2\pi} \geq \frac{L(A_2,g/a)^2}{2\pi} \geq \frac{L(A_2,g)^2}{2\pi} = \frac{1}{M(A_2)},
$$
d'où l'inégalité. L'égalité a évidemment lieu si $A_1=A_2$. Réciproquement, elle ne peut avoir lieu que si $a=1$ et si $g$ est la métrique produit sur $A_1$. Dans ce cas, en effet, $A_1$ est feuilleté par géodésiques fermées de $g$, c'est-à-dire par les cercles $(S^1\times\left\{t\right\})_{t\in]0,M[}$. Il en découle que $A_1=A_2$.
\end{proof}

\paragraph{\textbf{Lemme du quasi-maximum}} Nous continuons ces préliminaires avec un lemme de complétude, que nous croyons être dû à Gromov, et qui est à la fois élémentaire et très utile.

\begin{lemma}[Lemme du quasi-maximum]
\label{quasi-max}
Soit $(X,d)$ un espace métrique complet et $f:X\to[0,\infty)$ une fonction continue et positive. Alors pour tout $x_0\in X$, $C>1$ et $A,\alpha>0$ il existe $x\in X$ tel que $f(x)\geq f(x_0)$ et
$$d(x,y)\leq Af(x)^{-\alpha}\implies f(y)\leq Cf(x)$$
pour tout $y\in X$.
\end{lemma}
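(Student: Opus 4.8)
The plan is to argue by contradiction and build a divergent sequence whose $f$-values blow up, contradicting completeness. So suppose the conclusion fails for some fixed $x_0$, $C>1$, $A,\alpha>0$: this means that for every point $x$ with $f(x)\geq f(x_0)$, there exists some $y$ with $d(x,y)\leq Af(x)^{-\alpha}$ yet $f(y)>Cf(x)$. Starting from $x_0$ (which trivially satisfies $f(x_0)\geq f(x_0)$), I would iterate this: having produced $x_k$ with $f(x_k)\geq f(x_0)$, apply the failure hypothesis to get $x_{k+1}$ with $d(x_k,x_{k+1})\leq Af(x_k)^{-\alpha}$ and $f(x_{k+1})>Cf(x_k)$. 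Since $C>1$, by induction $f(x_k)>C^k f(x_0)\geq f(x_0)$, so each $x_k$ is a legitimate point to which the hypothesis applies, and in particular $f(x_k)\to\infty$.

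The key estimate is then that $(x_k)$ is Cauchy. Indeed, $d(x_k,x_{k+1})\leq Af(x_k)^{-\alpha} < A\big(C^k f(x_0)\big)^{-\alpha} = A f(x_0)^{-\alpha} C^{-\alpha k}$. Here one must note that $f(x_0)>0$; the only subtlety is the degenerate case $f(x_0)=0$, which I would dispose of first — if $f(x_0)=0$ one can simply replace $x_0$ by any point where $f$ is positive if such exists (then $f(x)\geq f(x_0)$ holds automatically and the conclusion for the new point implies it for $x_0$ since $\{y : d(x,y)\le Af(x)^{-\alpha}\}$ only shrinks... actually cleaner: if $f\equiv 0$ the statement is vacuously true with $x=x_0$ since $f(y)=0\le Cf(x)$; and if $f(x_0)=0$ but $f$ is not identically zero, pick $x_1$ with $f(x_1)>0=f(x_0)$ and note it suffices to find a good point $x\ge x_1$ — this requires a short remark that the claim localizes, which I'll phrase carefully). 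Granting $f(x_0)>0$, the bound $d(x_k,x_{k+1})\leq A f(x_0)^{-\alpha} C^{-\alpha k}$ with $C^{-\alpha}<1$ gives a geometric series, so $\sum_k d(x_k,x_{k+1})<\infty$ and $(x_k)$ is Cauchy.

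By completeness of $(X,d)$, $x_k\to x_\infty$ for some $x_\infty\in X$, and by continuity of $f$ we get $f(x_\infty)=\lim_k f(x_k)=+\infty$, which is absurd since $f$ takes values in $[0,\infty)$. This contradiction proves the lemma. I do not expect any real obstacle here; the only thing requiring a little care is the bookkeeping around the case $f(x_0)=0$ and the observation that it is enough to prove the statement for a well-chosen base point with positive value, since the conclusion is a statement about a single point $x$ satisfying $f(x)\ge f(x_0)$ and enlarging the threshold only makes the hypothesis $f(x)\ge f(x_0)$ easier while leaving the implication unchanged. Everything else is the standard "summable-displacement forces convergence" argument.
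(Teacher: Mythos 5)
Your proof is correct and follows essentially the same route as the paper's: negate the conclusion, iterate to build a sequence with $f(x_{k+1})>Cf(x_k)$ and $d(x_k,x_{k+1})\leq Af(x_k)^{-\alpha}$, bound the displacements by a geometric series, and let completeness plus continuity of $f$ yield the contradiction. Your extra care with the degenerate case $f(x_0)=0$ is a reasonable refinement that the paper's argument passes over silently, but it does not change the substance of the proof.
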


\begin{proof}[Preuve]
Supposons le contraire. Nous construisons par récurrence une suite $(x_n)$ de points de $X$ telle que pour tout $n\in\N$
$$d(x_n,x_{n+1})< Af(x_n)^{-\alpha}\,\,\,\,\,\,\,\&\,\,\,\,\,\,\, f(x_{n+1})> Cf(x_n).$$
Nous en déduisons que pour tout $n\geq 1$, $f(x_n)>C^nf(x_0)$ de sorte que $d(x_n,x_{n+1})<Af(x_0)^{-\alpha}C^{-n\alpha}$. Puisque $\alpha>0$ et $C>1$, la série $\sum_{n=0}^{\infty} C^{-n\alpha}$ est convergente et $(x_n)$ est de Cauchy. Par complétude de l'espace $X$, cette suite converge. Mais nous devons avoir $f(x_n)\to\infty$ quand $n$ tend vers l'infini. Cela contredit la continuité de $f$.
\end{proof}

\subsection{Discontinuité de l'uniformisation}

Avant d'entrer dans la preuve du théorème de continuité \ref{compact_unif} nous aimerions décrire un exemple instructif de discontinuité d'applications d'uniformisation.

\paragraph{\textbf{Une famille de surfaces de révolution}} Prenons comme modèle du disque hyperbolique la nappe d'hyperboloïde $\HH\dans\R^3$ d'équation $x^2+y^2-z^2=-1$ contenant $v_0=(0,0,1)$, munie de la métrique de Poincaré, notée $g_\HH$, c'est-à-dire la métrique hyperbolique obtenue par restriction de la métrique lorentzienne donnée par $ds^2=dx^2+dy^2-dz^2$. Notons $B_r\dans\HH$ le disque de rayon $r$ centré en $v_0=(0,0,1)$. C'est une surface de révolution autour de l'axe vertical, bordée par un cercle euclidien  $S_r$ de périmètre $2\pi\sinh(r)$.

Attachons à $S_r$ un cylindre euclidien $C_r$ dont la base a un périmètre $2\pi\sinh(r)$ et la hauteur est $r\sinh(r)$. Soit $C_r'\dans C_r$ un cylindre de même base que $C_r$, et dont la hauteur est $r\sinh(r)-1$. Lissons $B_r\cup C_r$ de manière symétrique par rapport aux rotations autour de l'axe vertical et notons $D_r$ la surface de révolution ainsi définie. Définissons alors dans $D_r$ une métrique $g_r$ satisfaisant aux hypothèses suivantes
\begin{itemize}
\item $g_r=g_\HH$ en restriction à $B_{r-1}$;
\item $g_r$ est conformément euclidienne, et invariante par rotation autour de l'axe vertical dans $C_r$;
\item $g_r$ est plate dans $C_r'$;
\item $g_r$ est complète.
\end{itemize}

\paragraph{\textbf{Convergence vers le plan de Poincaré}} Considérons la famille de surfaces riemanniennes pointées $(D_r,g_r,v_0)$ dont nous rappelons que le point de base est $v_0=(0,0,1)$. Il est immédiat que lorsque $r$ tend vers l'infini, cette famille de surfaces riemanniennes pointées converge vers le plan hyperbolique $(\HH,g_{\HH},v_0)$.

\paragraph{\textbf{Discontinuité de l'uniformisation}} Soit $\Pi_r:(\D,0)\to(D_r,v_0)$ une application d'uniformisation et $\Phi_r=\Pi_r^{-1}:(D_r,v_0)\to(\D,0)$. Le cylindre riemannien $(C_r,g_r)$ est conformément euclidien, symétrique par rotation autour de l'axe vertical et son module conforme vaut $r$ (voir le Lemme \ref{l_conf-modulus}) de sorte que
$$\Phi_r(C_r)=\left\{e^{-r}\leq |z|< 1\right\},$$
ce qui entraîne
$$\Phi_r(B_{r-1})\dans\left\{|z|<e^{-r}\right\}.$$

La dérivée en $v_0$ de l'application $\Phi_r$ tend donc vers zéro. D'où l'on déduit que la dérivée en $0$ de $\Pi_r$ explose et qu'il y a une discontinuité de l'uniformisation.

\paragraph{\textbf{Morale}} Que vient-il de se passer? Prenons $v_r$ n'importe quel point situé sur le cercle se trouvant à mi-hauteur du cylindre $C_r$. Quand $r$ tend vers l'infini, la famille de surfaces riemanniennes pointées $(D_r,g_r,v_r)$ converge vers le plan euclidien pointé $(\C,|dz|^2,0)$ au sens de Cheeger-Gromov: cette surface n'est bien évidemment pas hyperbolique. L'adhérence de la famille de disques pointés $\{(D_r,g_r,v)| r\geq 1, v\in D_r\}$ vérifie les deux dernières hypothèses que nous demandions pour la famille $\cM$ mais pas la première. Nous voyons donc que les hypothèses sur $\cM$ sont nécessaires.

\subsection{Preuve du Théorème \ref{compact_unif}}

Nous utiliserons un résultat élémentaire de la théorie de la régularité elliptique.

\begin{lemma}\label{l.elliptic_reg}

Soit $(\Omega,x_0)$ un domaine pointé du plan complexe $\C$. Soit $(M_n,g_n,p_n)$ une suite de surfaces pointées appartenant à $\cM$ et convergeant vers la surface riemannienne pointée $(M,g,p)$. Pour tout $n$, soit $f_n:\Omega\rightarrow M_n$ une fonction holomorphe telle que $f_n(x_0)=p_n$. Supposons que pour toute partie compacte $K$ de $\Omega$, il existe $C>0$ tel que, pour tout $x\in K$ et tout $n\in\N$,
$$
\|Df_n(x)\| \leq C.
$$
Alors il existe une fonction holomorphe $f:\Omega\rightarrow M$ telle que $f(x_0)=p$ vers laquelle converge une sous-suite de $(f_n)$ au sens de Cheeger-Gromov.

\end{lemma}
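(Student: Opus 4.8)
The plan is to run a Montel-type normal-families argument transplanted into the Cheeger--Gromov framework. First I would fix a sequence of convergence maps $F_n\colon(M,g,p)\to(M_n,g_n,p_n)$ and an exhaustion of $\Omega$ by connected, relatively compact open sets $\Omega_1\subset\subset\Omega_2\subset\subset\cdots$ with $x_0\in\Omega_1$ and $\bigcup_j\Omega_j=\Omega$. Fix $j$. By hypothesis there is $C_j>0$ with $\|Df_n\|\le C_j$ on $\overline{\Omega_{j+1}}$ for every $n$; integrating this bound along paths issued from $x_0$ and using completeness of $(M_n,g_n)$ gives $R_j>0$ with $f_n(\overline{\Omega_j})\subset B_{g_n}(p_n,R_j)$ for all $n$. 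Then I would pick a relatively compact open $U_j\subset M$ with $\overline{B_g(p,3R_j)}\subset U_j$: since $F_n^{\ast}g_n\to g$ in $\Ciloc$ on the compact set $\overline{U_j}$, the maps $F_n$ are Riemannian almost-isometries there for $n$ large, and a standard geodesic argument then produces $n_j$ such that, for $n\ge n_j$, $F_n$ restricts to a diffeomorphism on $U_j$ and $B_{g_n}(p_n,R_j)\subset F_n(U_j)$. Consequently $h_n^{(j)}:=F_n^{-1}\circ f_n$ is well defined on $\overline{\Omega_j}$ for $n\ge n_j$ and takes values in the compact set $\overline{U_j}\subset M$.

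The second step is to obtain interior estimates on $h_n^{(j)}$ that are uniform in $n$. Since $f_n$ is holomorphic for the complex structure $J_n$ of $M_n$ (compatible with $g_n$), the map $h_n^{(j)}$ is holomorphic for the pulled-back complex structure $F_n^{\ast}J_n$ on $M$, which is precisely the complex structure associated to $F_n^{\ast}g_n$; hence $F_n^{\ast}J_n\to J$ in $\Ciloc$ near $\overline{U_j}$, where $J$ denotes the complex structure of $g$. Reading this in finitely many $J$-holomorphic coordinate charts covering a neighbourhood of $\overline{U_j}$, the functions $h_n^{(j)}$ satisfy Beltrami equations $\partial_{\bar w}h_n^{(j)}=\mu_n\,\partial_w h_n^{(j)}$ whose coefficients converge to $0$ in every $C^k$ norm locally; in particular $\|\mu_n\|_\infty$ stays bounded away from $1$ uniformly in $n$. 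Feeding the uniform $C^0$ bound (values in $\overline{U_j}$) into interior elliptic regularity then yields, for every compact $K\subset\Omega_j$ and every $k\in\N$, a bound on $\|h_n^{(j)}\|_{C^k(K)}$ independent of $n\ge n_j$.

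Arzel\`a--Ascoli together with a diagonal extraction over $j$ then give a single subsequence, still written $(f_n)$, along which $h_n:=F_n^{-1}\circ f_n$ converges in $\Ciloc$ on all of $\Omega$ to a map $f\colon\Omega\to M$. Letting $n\to\infty$ in the Beltrami equations shows that $f$ is holomorphic for $J$, and $f(x_0)=\lim_n F_n^{-1}(f_n(x_0))=\lim_n F_n^{-1}(p_n)=p$. What remains is to check that this is exactly Cheeger--Gromov convergence of $(f_n)$ to $f$ in the sense of Definition~\ref{def_appli_CG}, with $(F_n)$ as convergence maps: conditions (1), (2) and (4) are immediate from the construction above (applied with $O=\Omega_j$ for $j$ large enough that $\overline O$ sits well inside the relevant $U$), while condition (3), namely $f_n(f^{-1}(\overline O))\subset F_n(U)$, follows with a little more care from the derivative bounds and the comparison $F_n^{\ast}g_n\simeq g$ on $U$, which confine $f_n$ to stay uniformly close to $F_n\circ f$ over $f^{-1}(\overline O)$.

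The main obstacle, as is usual for such statements, is the uniform interior regularity of the second step: the target metric $g_n$ --- equivalently the complex structure $J_n$, equivalently the Beltrami coefficient $\mu_n$ --- is itself moving, so the estimates have to be made uniform along a varying family of elliptic equations. The device that makes this work is to transplant everything onto the fixed manifold $M$ via the convergence maps $F_n$, where the structures converge smoothly, and then to use that completeness together with the derivative hypothesis supply exactly the $C^0$ control needed to trigger standard elliptic bounds. The secondary delicate point is the bookkeeping of domains of definition, so that $F_n^{-1}\circ f_n$ fits the requirements of Definition~\ref{def_appli_CG} --- in particular condition (3), where $f^{-1}(\overline O)$ need not be compact.
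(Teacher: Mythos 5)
Your proposal follows essentially the same route as the paper's proof: the derivative bound together with completeness confines $f_n(\overline{\Omega_j})$ to metric balls about $p_n$, one transplants $F_n^{-1}\circ f_n$ onto the fixed limit surface via the convergence maps, uniform interior elliptic estimates give higher-order bounds there, and Arzel\`a--Ascoli with a diagonal extraction yields the holomorphic limit $f$ with $f(x_0)=p$. Your write-up is merely more explicit than the paper's, in particular on the moving complex structures (the Beltrami-coefficient argument behind the phrase ``par r\'egularit\'e elliptique'') and on verifying the conditions of Definition \ref{def_appli_CG}.
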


\begin{proof}[Preuve] Soient $x_0\in U_1\subseteq U_2$ deux ouverts relativement compacts de $\Omega$ tels que l'adhérence de $U_1$ soit incluse dans $U_2$. Supposons de plus que tout point de $U_2$ puisse être relié à $x_0$ par une courbe lisse incluse dans $U_2$ et de longueur inférieure à $R$, pour un certain $R>0$. Par hypothèse il existe $C>0$ tel que, pour tout $x\in U_2$ et tout $n\in\N$,
$$
\|Df_n(x)\| \leq C.
$$
Nous avons alors pour tout $n$,
$$
f_n(U_2)\subseteq B_{M_n}(p_n,CR).
$$
Soit $(F_n)$ une suite d'applications de convergence de $(M_n,g_n,p_n)$ vers $(M,g,p)$. Prenons $N\in\N$ tel que pour $n\geq N$, la restriction de $F_n$ à $B_{M}(p,2CR)$ soit un difféomorphisme sur son image, et dont l'image contient $B_{M_n}(p_n,CR)$. Quitte à augmenter $N$, nous pouvons supposer que pour tout $x\in U_2$ et tout $n\geq N$,
$$
\|D(F_n^{-1}\circ f_n)(x)\| \leq 2C.
$$
Par régularité elliptique, pour tout $k$, il existe $C_k>0$ tel que pour tout $x\in U_1$ et tout $n\geq N$,
$$
\|D^k(F_n^{-1}\circ f_n)(x)\| \leq C_k.
$$
L'ouvert $U_1$ étant arbitraire, nous pouvons conclure en utilisant le théorème d'Arzela-Ascoli.
\end{proof}

\begin{lemma}\label{l.upper_bound}

Soit $(M_n,g_n,p_n)$ une suite d'éléments de $\cM$. Pour tout $n\in\N$, soit $\Pi_n:\D\rightarrow M_n$ une application d'uniformisation. Il existe $C>0$ telle que pour tout $x\in\D$ et tout $n\in\N$,
$$
\|D\Pi_n(x)\| \leq C,
$$
la norme étant prise par rapport à la métrique de Poincaré $h_\D$.
\end{lemma}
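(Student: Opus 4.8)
Il s'agit de borner uniformément la dérivée des applications d'uniformisation $\Pi_n:\D\to M_n$ en tout point, et non seulement à l'origine. Comme $\cM$ est invariante par changement de point de base, il suffit en fait de borner $\|D\Pi_n(0)\|$ uniformément : si $x\in\D$ est quelconque, on compose $\Pi_n$ avec l'isométrie hyperbolique de $\D$ envoyant $0$ sur $x$, ce qui fournit une nouvelle application d'uniformisation de $(M_n,g_n,\Pi_n(x))\in\cM$, et l'on se ramène au cas $x=0$. La borne recherchée est donc de la forme $\|D\Pi_n(0)\|\leq C$ avec $C$ indépendant de $n$ et du choix de l'application d'uniformisation dans $\cM$.

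**Stratégie.** Je procéderais par l'absurde et par compacité. Supposons qu'il existe une suite $(M_n,g_n,p_n)$ d'éléments de $\cM$ et des applications d'uniformisation $\Pi_n:(\D,0)\to(M_n,p_n)$ telles que $\|D\Pi_n(0)\|\to\infty$. Par compacité de $\cM$ au sens de Cheeger-Gromov, on peut extraire une sous-suite telle que $(M_n,g_n,p_n)$ converge vers une surface riemannienne pointée $(M_\infty,g_\infty,p_\infty)\in\cM$, qui est en particulier conformément hyperbolique. Fixons une suite d'applications de convergence $F_n:(M_\infty,g_\infty,p_\infty)\to(M_n,g_n,p_n)$. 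L'idée est de transporter la géométrie : regardons $\Pi_n$ comme une suite d'applications holomorphes à valeurs dans $M_n$, et cherchons à en extraire une limite. Pour cela j'utiliserais le fait que $\Pi_n$ est $1$-Lipschitz pour la métrique de Poincaré (car $\Pi_n$ est un revêtement holomorphe, donc une isométrie locale des métriques hyperboliques, et la métrique hyperbolique de $M_n$ ne dépasse la métrique induite que... — en réalité le point crucial est le lemme de Schwarz-Pick) ; plus précisément, toute application holomorphe du disque de Poincaré vers une surface hyperbolique est contractante, donc $\|D\Pi_n(x)\|_{g_n}\leq \|D\mathrm{id}\|$ reste contrôlé par la métrique hyperbolique de $M_n$, elle-même uniformément comparable à $g_n$ sur les compacts puisque $\cM$ est compacte. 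Ceci donne des bornes uniformes locales sur $\|D\Pi_n\|$, et par le Lemme \ref{l.elliptic_reg} on extrait une limite holomorphe $\Pi_\infty:(\D,0)\to(M_\infty,p_\infty)$ au sens de Cheeger-Gromov.

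**Conclusion via l'unicité de l'uniformisation.** Une fois obtenue la limite $\Pi_\infty$, deux choses restent à vérifier. D'une part, $\Pi_\infty$ est encore un revêtement holomorphe : c'est ici qu'interviennent le module conforme et le Lemme \ref{l_modulus} ; on montre que $\Pi_\infty$ ne peut pas "dégénérer" (par exemple devenir constante ou rater une partie de $M_\infty$) en contrôlant le module conforme des anneaux images, et le fait que $M_\infty$ est conformément hyperbolique (hypothèse (1) sur $\cM$) est exactement ce qui exclut le scénario de dégénérescence décrit dans la sous-section sur la discontinuité. D'autre part, et c'est le point qui donne la contradiction : l'unicité de l'application d'uniformisation (à rotation près) de $(M_\infty,g_\infty,p_\infty)$ force $\|D\Pi_\infty(0)\|$ à valoir une constante strictement positive finie $c_\infty>0$ ; mais la convergence au sens de Cheeger-Gromov de $\Pi_n$ vers $\Pi_\infty$ entraîne $\|D\Pi_n(0)\|\to c_\infty<\infty$, contredisant l'hypothèse $\|D\Pi_n(0)\|\to\infty$.

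**Principal obstacle.** La difficulté centrale est de démontrer que la limite $\Pi_\infty$ reste un revêtement (en particulier surjectif et localement injectif) et que sa dérivée à l'origine ne s'annule pas : c'est précisément là que l'hypothèse de compacité combinée à l'hyperbolicité conforme de toute la famille $\cM$ (et pas seulement des $M_n$) est indispensable — l'exemple des surfaces de révolution $(D_r,g_r)$ montre qu'en l'absence de l'hypothèse (1), une limite de surfaces hyperboliques peut être euclidienne et la dérivée de l'uniformisation peut exploser. Le contrôle quantitatif se fera via le module conforme : si $\|D\Pi_n(0)\|$ explose, alors l'image par $\Pi_n^{-1}$ d'un petit disque géodésique fixe autour de $p_n$ a un module conforme (vu comme anneau une fois retiré le centre, ou via la capacité) qui tend vers l'infini, ce qui est incompatible avec la géométrie uniformément bornée des $M_n$ fournie par la compacité de $\cM$ ; c'est l'argument qu'il faudra rendre rigoureux, en s'appuyant sur les Lemmes \ref{l_conf-modulus} et \ref{l_modulus}.
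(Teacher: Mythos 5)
Votre réduction au point $0$ via l'invariance par changement de point de base est correcte, mais le c{\oe}ur de votre argument est circulaire en deux endroits. D'une part, l'affirmation que la métrique hyperbolique de $M_n$ est uniformément comparable à $g_n$ \emph{parce que} $\cM$ est compacte est exactement l'énoncé à démontrer : $\Pi_n$ étant une isométrie locale de $(\D,h_\D)$ vers $(M_n,h_{M_n})$, où $h_{M_n}$ désigne la métrique hyperbolique de la classe conforme, la borne $\|D\Pi_n\|\leq C$ équivaut à l'inégalité $g_n\leq C^2\,h_{M_n}$; or la compacité de Cheeger-Gromov de $\cM$ ne porte que sur les métriques $g_n$ et ne contrôle en rien $h_{M_n}$ --- l'exemple des surfaces de révolution $(D_r,g_r)$ fournit précisément une famille précompacte pour laquelle cette comparaison dégénère et $\|D\Pi_r(0)\|$ explose. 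Le lemme de Schwarz--Pick contrôle $D\Pi_n$ par rapport à $h_{M_n}$ au but, pas par rapport à $g_n$; une comparaison dans le sens voulu relèverait du lemme d'Ahlfors--Schwarz et exigerait une hypothèse de courbure sur $g_n$ absente des hypothèses sur $\cM$. D'autre part, pour extraire votre limite $\Pi_\infty$ par le Lemme \ref{l.elliptic_reg}, il faut déjà disposer de bornes locales uniformes sur $\|D\Pi_n\|$, c'est-à-dire de la conclusion du lemme; et si l'on en disposait, la convergence $\|D\Pi_n(0)\|\to\|D\Pi_\infty(0)\|<\infty$ serait automatique et ne constituerait pas une contradiction tirée de l'hyperbolicité de la limite. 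Enfin, l'esquisse par module conforme ne peut pas fonctionner telle quelle : le module est un invariant conforme et $\Pi_n$ est holomorphe, donc le module de l'image réciproque d'un anneau autour de $p_n$ coïncide avec celui de l'anneau et ne voit pas la taille de $\|D\Pi_n(0)\|$; dans l'article, l'argument de module sert à la minoration (Lemme \ref{l.lower_bound}), pas à la majoration.

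L'idée qui vous manque est un argument de renormalisation. Si $R_n=\|D\Pi_n(x_n)\|\to\infty$, le lemme du quasi-maximum (Lemme \ref{quasi-max}) permet de supposer $\|D\Pi_n\|\leq 2R_n$ sur $B_\D(x_n,1/\sqrt{R_n})$, puis, quitte à composer par des isométries de $\D$, que $x_n=0$. On dilate alors la source : $\widetilde{\Pi}_n(z)=\Pi_n(z/R_n)$, définie sur la boule de rayon $\sqrt{R_n}$ pour la métrique $R_n^2\,H_{R_n}^{\ast}h_\D$, laquelle converge vers $4|dz|^2$ sur les compacts de $\C$; les $\widetilde{\Pi}_n$ ont dérivée de norme $1$ en $0$ et $\leq 2$ partout, et c'est à elles (et non aux $\Pi_n$ eux-mêmes) que le Lemme \ref{l.elliptic_reg} s'applique légitimement. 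En repointant en $q_n=\Pi_n(0)$ --- c'est ici que sert l'hypothèse (3) --- la compacité de $\cM$ et l'hypothèse (1) donnent une limite $(M,g,q)\in\cM$ de type hyperbolique, et la limite des $\widetilde{\Pi}_n$ est une application holomorphe \emph{non constante} de $\C$ dans $M$, ce qui contredit le théorème de Liouville. C'est la preuve de l'article; votre schéma, tel quel, ne produit pas de contradiction exploitable.
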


\begin{proof}[Preuve] Supposons le contraire. Dans ce cas il existe une suite $(x_n)$ d'éléments de $\D$ tels que
$$
\lim_{n\to\infty}\|D\Pi_n(x_n)\| = \infty.
$$

Pour tout $n$, notons $R_n=\|D\Pi_n(x_n)\|$. Par le lemme du quasi-maximum nous pouvons de plus supposer que pour tout $n$ et tout $y\in B_{\D}(x_n,1/\sqrt{R_n})$, $\|D\Pi_n(y)\|\leq 2R_n.
$
Enfin, quitte à composer par une suite d'isométries nous pouvons supposer que $x_n=0$ pour tout $n$.

Considérons alors $H_{R_n}:\D_{R_n}=\{|z|<R_n\}\to\D$, l'homothétie  $z\mapsto z/R_n$, et définissons la métrique $g_{R_n}=R_n^2\,(H_{R_n}^{\ast}h_{\D})$. Cette métrique est complète sur $\D_{R_n}$ et converge vers $4|dz|^2$ sur toute partie compacte du plan dans la topologie $\Ciloc$. \'Etant donné $n\in\N$, nous définissons $\widetilde{\Pi}_n:B_{\D_{R_n}}(0,\sqrt{R_n})\rightarrow M_n$ en posant
$$
\widetilde{\Pi}_n(x) := \Pi_n\circ H_{R_n}(x)= \Pi_n\left(x/R_n\right).
$$
Notons que tout domaine borné $\Omega\subseteq\C$ est inclus dans $B_{\D_{R_n}}(0,\sqrt{R_n})$ à partir d'un certain rang. De plus pour tout $n$, $\|D\widetilde{\Pi}_n(0)\| = 1,$ et pour tout $x\in B_{\D_{R_n}}(0,\sqrt{R_n})$, $\|D\widetilde{\Pi}_n(x)\| \leq 2,$
la norme étant prise à la source par rapport à $g_{R_n}$. Pour tout $n$, posons $q_n:=f_n(0)$. Par hypothèse $(M_n,g_n,q_n)\in\cM$. De plus, par compacité de $\cM$, nous pouvons supposer que $(M_n,g_n,q_n)$ converge au sens de Cheeger-Gromov vers une surface riemannienne pointée $(M,g,q)\in\cM$, qui est par conséquent de type hyperbolique. Le Lemme \ref{l.elliptic_reg} nous fournit une application holomorphe $\widetilde{\Pi}:\C\rightarrow M$ vers laquelle converge une sous-suite de $(\widetilde{\Pi}_n)$ au sens de Cheeger-Gromov. 

En particulier, étant donné que $\|D\widetilde{\Pi}(0)\| = 1,$ la norme étant prise à la source par rapport à $4|dz|^2$, cette application est non-constante. Cela contredit le théorème de Liouville, et le lemme est prouvé.
\end{proof}

\begin{lemma}\label{l.lower_bound}
Soit $(M_n,g_n,p_n)$ une suite d'éléments de $\cM$. Pour tout $n\in\N$, soit $\Pi_n:\D\rightarrow M_n$ une application d'uniformisation. Il existe $C>0$ tel que, pour tout $x\in\D$ et tout $n\in\N$,
$$
\|D\Pi_n(x)\| \geq 1/C,
$$
la norme étant prise par rapport à la métrique de Poincaré $h_\D$.
\end{lemma}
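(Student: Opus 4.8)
The plan is to argue by contradiction, exactly in the spirit of the proof of Lemma \ref{l.upper_bound}, but now rescaling \emph{on the target side} rather than the source side. Suppose there is a sequence $(x_n)$ in $\D$ with $\|D\Pi_n(x_n)\|\to 0$. After composing the uniformisation maps $\Pi_n$ with isometries of $(\D,h_\D)$ on the source (which does not change $\mathcal M$ being stable under change of basepoint), I may assume $x_n=0$ for all $n$, and after passing to a subsequence using the compactness of $\mathcal M$, I may assume that $(M_n,g_n,p_n)$ itself converges in the Cheeger--Gromov sense to some $(M,g,p)\in\mathcal M$, which is therefore conformally hyperbolic. Set $r_n:=\|D\Pi_n(0)\|\to 0$.

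**Rescaling the metric on the target.**

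The key idea: since $\Pi_n$ is a holomorphic covering $(\D,0)\to(M_n,p_n)$, its derivative at $0$ measured in the Poincaré metric governs how much $\Pi_n$ ``shrinks'' a neighbourhood of $0$. I would consider the rescaled metrics $g_n':=r_n^{-2}g_n$ on $M_n$. With respect to $g_n'$, the map $\Pi_n:(\D,h_\D,0)\to(M_n,g_n',p_n)$ has $\|D\Pi_n(0)\|_{g_n'}=1$. I want to apply Lemma \ref{l.elliptic_reg} to the sequence $\Pi_n$ viewed as maps into $(M_n,g_n',p_n)$; for that I need (a) the rescaled spaces $(M_n,g_n',p_n)$ to subconverge in the Cheeger--Gromov sense, and (b) a uniform local upper bound $\|D\Pi_n(x)\|_{g_n'}\le C$ on compact subsets of $\D$. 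For (b) I would invoke the Schwarz--Pick lemma: since $\Pi_n$ is a holomorphic map between hyperbolic surfaces, $\|D\Pi_n(x)\|_{g_n}\le$ (ratio of Poincaré densities) $\cdot\,\|D\Pi_n(0)\|_{g_n}$ along a path, giving a bound on each fixed $B_\D(0,\rho)$ of the form $\|D\Pi_n(x)\|_{g_n}\le C_\rho\,r_n$, hence $\|D\Pi_n(x)\|_{g_n'}\le C_\rho$. The point (a) is the delicate one: rescaling a metric by $r_n^{-2}$ with $r_n\to 0$ \emph{blows up} all distances, so the rescaled manifolds could a priori escape to infinity or degenerate.

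**Extracting the limit and the contradiction.**

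Here is where I would use that $\mathcal M$ is stable under change of basepoint together with the blow-up version of Cheeger--Gromov compactness already established: because the original $g_n$ have uniformly bounded geometry (all derivatives of curvature bounded, injectivity radius bounded below — consequences of $\mathcal M$ being Cheeger--Gromov compact), the rescaled metrics $g_n'=r_n^{-2}g_n$ have curvature tending uniformly to $0$ and injectivity radius tending to $+\infty$; by the standard precompactness theorem (as cited, \cite{Ch,Gr,Lessa,Pet}), a subsequence of $(M_n,g_n',p_n)$ converges in the Cheeger--Gromov topology to a complete surface $(M',g',p')$ which is flat and has infinite injectivity radius, i.e. is the Euclidean plane $(\C,|dz|^2,0)$. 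Now Lemma \ref{l.elliptic_reg} applies (its proof only uses a uniform derivative bound plus elliptic regularity, and goes through verbatim for the target sequence $(M_n,g_n',p_n)$), producing a holomorphic limit map $\Pi:(\D,0)\to(\C,0)$ which is a Cheeger--Gromov limit of a subsequence of the $\Pi_n$. Since $\|D\Pi(0)\|=1$ with respect to $|dz|^2$, this $\Pi$ is non-constant. But a non-constant holomorphic map from the unit disc into $\C$ with a \emph{derivative bound} on every compact subset (inherited from the $C_\rho$ bounds above) is no obstruction by itself — the contradiction instead comes from the covering property: each $\Pi_n$ is a \emph{covering} map onto $M_n$, and a Cheeger--Gromov limit of covering maps onto spaces converging to $\C$ would have to be a covering of $\C$ by $\D$, which is impossible since $\C$ is not hyperbolic (equivalently, $\Pi$ would be a biholomorphism onto a neighbourhood that exhausts $\C$ in the limit, contradicting $\Pi(\D)\ne\C$). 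I expect the main obstacle to be making precise that the limiting map $\Pi$ really is forced to be onto, or at least forced to contradict conformal hyperbolicity of the limit — i.e. tracking the covering property through the rescaling and the Cheeger--Gromov convergence; this is the analogue of, and dual to, the Liouville-theorem step in Lemma \ref{l.upper_bound}.
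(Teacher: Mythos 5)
Votre schéma (éclatement côté but) diverge de la preuve du papier, mais il comporte deux lacunes réelles. La première est l'étape « Schwarz--Pick ». Le lemme de Schwarz--Pick ne contrôle $\|D\Pi_n\|$ que lorsque le but est muni de \emph{sa} métrique hyperbolique $h_{M_n}$, pour laquelle $\Pi_n$ est une isométrie locale et $\|D\Pi_n\|\equiv 1$; il ne dit rien sur $g_n\in\cM$, qui n'est que dans la bonne classe conforme. En écrivant $g_n=e^{2u_n}h_{M_n}$, on a $\|D\Pi_n(x)\|_{g_n}=e^{u_n(\Pi_n(x))}$, de sorte que la borne que vous invoquez, $\|D\Pi_n(x)\|_{g_n}\le C_\rho\,\|D\Pi_n(0)\|_{g_n}$ sur $B_\D(0,\rho)$, est une inégalité de type Harnack sur l'oscillation du facteur conforme $u_n$ à distance hyperbolique bornée --- c'est précisément le c\oe ur analytique du lemme (et du théorème de Candel), et elle ne découle ni de Schwarz--Pick ni de l'équation de courbure jointe au Lemme \ref{l.upper_bound}: sans minoration préalable de $e^{u_n}$ (ce qu'on veut prouver), l'argument est circulaire. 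Notez que la borne absolue du Lemme \ref{l.upper_bound} devient $\le C/r_n$ après votre renormalisation, donc inutilisable.

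La seconde lacune est la conclusion, que vous signalez vous-même comme incertaine: elle ne se referme pas. Après éclatement du but, la limite de $(M_n,r_n^{-2}g_n,q_n)$ est le plan euclidien, et le Lemme \ref{l.elliptic_reg} ne peut fournir qu'une application holomorphe $\Pi:\D\to\C$ avec $\|D\Pi(0)\|=1$ --- par exemple l'inclusion $\D\hookrightarrow\C$, qui n'a rien d'absurde: l'hyperbolicité a été détruite par la renormalisation, et la propriété de revêtement ne passe pas à ce type de limite (la convergence de Cheeger--Gromov ne voit que des morceaux bornés des buts, donc l'application limite n'a aucune raison d'être propre ou surjective sur $\C$). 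La contradiction décisive manque donc. Le papier procède autrement: sans renormaliser, il passe aux revêtements universels pour rendre les $\Pi_n$ injectives, montre par un argument de degré que la limite $\Pi$ (qui vérifie $D\Pi(0)=0$) est \emph{constante}, puis conclut par une comparaison de modules conformes: un anneau essentiel fixe $F_n(A)$ autour de $q$ doit se plonger essentiellement dans $\Pi_n(\D\setminus\D_\rho)$, dont le module tend vers $0$, ce qui contredit le Lemme \ref{l_modulus}. C'est un ingrédient de ce type --- détecter que tout le bout du disque est écrasé près de $q$ --- qui fait défaut à votre stratégie.
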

\begin{proof}[Preuve] Supposons le contraire. Quitte à extraire et à composer par une suite d'isométries du disque de Poincaré, nous pouvons supposer que
$$
\lim_{n\rightarrow\infty}\|D\Pi_n(0)\| = 0.
$$
Pour tout $n\in\N$, posons $q_n=\Pi_n(0)$. Quitte à considérer les revêtements universels nous pouvons supposer que pour tout $n\in\N$, $M_n$ soit simplement connexe, et que l'application d'uniformisation soit un difféomorphisme. De plus, en extrayant de nouveau si besoin, nous pouvons supposer que la suite $(M_n,g_n,q_n)$ converge vers $(M,g,q)$.  Par compacité, cette limite est alors le revêtement riemannien d'un élément de $\cM$ (nous ne savons pas a priori si $M$ est simplement connexe). Fixons une fois pour toutes une suite $(F_n)$ d'applications de convergence. Par application des Lemmes \ref{l.elliptic_reg} et \ref{l.upper_bound}, nous pouvons supposer l'existence d'une application holomorphe $\Pi:(\D,0)\rightarrow (M,q)$ vers laquelle la suite $(\Pi_n)$ converge au sens de Cheeger-Gromov. En particulier, $D\Pi(0)=0$.
\par
Nous affirmons que $\Pi$ est constante. Supposons en effet qu'elle ne le soit pas. Puisque $\Pi$ est holomorphe il existe un voisinage $U$ relativement compact de $0$ dans $\D$ tel que $q\notin\Pi(\partial U)$ et $\deg(\Pi;q,U)>1$. Nous rappelons que $\deg(\Pi;q,U)$, le \emph{degré de $\Pi$ relativement à $U$}, est le nombre de préimages dans $U$, comptées avec multiplicité, d'une valeur régulière de $\Pi$ assez proche de $q$. Par définition de la convergence au sens de Cheeger-Gromov de suites d'applications (cf. la Définition \ref{def_appli_CG}) nous voyons que, lorsque $n$ est suffisamment grand, les propriétés suivantes sont vérifées
\begin{enumerate}
\item $F_n$ est un difféomorphisme d'un voisinage de $\Pi(U)$ sur son image ;
\item cette image contient $\Pi_n(U)$;
\item $q_n\notin\Pi_n(\partial U)$.
\end{enumerate} 
En appliquant la théorie classique du degré, nous déduisons que lorsque $n$ est suffisamment grand, $\deg(\Pi_n;q_n,U)=\deg(F_n^{-1}\circ\Pi_n;q,U)>1$, ce qui est absurde car $\Pi_n$ est un difféomorphisme. Nous déduisons donc que $\Pi$ est constante, confirmant l'affirmation.

Soit $B(q,r)$ le disque géodésique de rayon $r$ et de centre $q$ dans $M$, où $r$ est choisi inférieur au rayon d'injectivité de $M$ en $q$. Soit $\epsilon\in]0,r[$ et $A$, l'anneau $B(q,r)\setminus\overline{B(q,\epsilon)}$. Rappelons que les $F_n$ sont des applications de convergence. Par définition nous pouvons supposer que pour tout $n$ la restriction de  $F_n$ à $B(q,r)$ soit un difféomorphisme sur son image. De plus le module conforme de $F_n(A)$ tend vers celui de $A$ quand $n$ tend vers l'infini (par définition de la longueur extrémale).  

Soit à présent $\D_\rho$ le disque hyperbolique centré en $0$ et de rayon $\rho\in]0,+\infty[$. \'Etant donné que $(\Pi_n)$ converge au sens de Cheeger-Gromov vers $\Pi$, qui est constante, nous avons, pour $n$ suffisamment grand, $\Pi_n(\overline{\D_\rho})\dans F_n(B(q,\epsilon))$. Ainsi $F_n(A)$ est un anneau essentiel dans $\Pi_n(\D\moins\D_\rho)$. Nous déduisons, par le Lemme \ref{l_modulus}, que $M(A)=\lim_{n\to\infty}M(F_n(A))\leq\lim_{n\to\infty}M(\Pi_n(\D\moins\D_\rho))= M(\D\moins\D_\rho)$ (la dernière égalité vient de ce que $\Pi_n$ est holomorphe). C'est absurde car $M(\D\moins\D_\rho)$ tend vers $0$ lorsque $\rho$ tend vers l'infini.
 Ceci achève la preuve du lemme.
\end{proof}

\begin{proof}[Preuve du Théorème \ref{compact_unif}] Les Lemmes \ref{l.elliptic_reg}, \ref{l.upper_bound} et \ref{l.lower_bound} fournissent une application holomorphe $\Pi:\D\rightarrow M$ vérifiant $\Pi(0)=p$ et vers laquelle converge une sous-suite de $(\Pi_n)$ au sens de Cheeger-Gromov. Puisque la différentielle de cette application est uniformément bi-Lipschitz (par le Lemme \ref{l.lower_bound}), c'est un revêtement.
\end{proof}

\section{Laminations et espace des revêtements des feuilles}\label{s.lam_CG}

 Dans cette section nous introduisons la notion de lamination par variétés riemanniennes et établissons la compacité de l'espace formé par les revêtements riemanniens des feuilles d'une telle lamination.

\subsection{Laminations riemanniennes}
\paragraph{\textbf{Laminations} } Soit $X$ un espace métrisable compact. Une \emph{carte laminée} $d$-dimensionnelle de $X$ est un triplet $(U,T,\phi)$ où $U$ est un ouvert de $X$, $T$ est un espace métrisable, et $\phi:U\rightarrow(-1,1)^d\times T$ est un homéomorphisme. Un \emph{atlas laminé} sur $X$ est alors un recouvrement $(U_i,T_i,\phi_i)_{i\in I}$ de $X$ par cartes laminées telles tel que pour tous $i\neq j$, l'application de transition $\phi_j\circ\phi_i^{-1}$ s'écrit
$$
(\phi_j\circ\phi_i^{-1})(z,t) = (\zeta_{ij}(z,t),\tau_{ij}(t)),
$$
où $(z,t)$ est ici un élément de $\phi_i(U_i\cap U_j)$, et
\begin{enumerate}
\item $\tau_{ij}$ est un homéomorphisme entre un ouvert de $T_i$ et un ouvert de $T_j$;
\item à $t$ fixé, $\xi_{ij}(\cdot,t)$ est un difféomorphisme lisse sur son image;
\item $\xi_{ij}(\cdot,t)$ varie continûment dans la topologie $\Ciloc$ lorsque $t$ varie dans $T_i$.
\end{enumerate}
Deux atlas laminés sont \emph{équivalents} si leur union est encore un atlas laminé et nous dirons qu'une lamination $\cL$ de $X$ est une classe d'équivalence d'atlas laminés.

\'Etant donnée une carte laminée $(U,T,\phi)$ de $\cL$ et un point $t$ de $T$, l'ensemble $\phi^{-1}((-1,1)^d\times\left\{t\right\})$ s'appelle une \emph{plaque} de la carte. Les plaques se recollent, donnant ainsi une partition de $X$ en variétés lisses de dimension $d$, appelées les \emph{feuilles} de la lamination. Pour tout $x\in X$, la feuille passant par $x$ sera notée $L_x$. Chaque feuille $L$ vient avec un \emph{plongement canonique} $e:L\to X$.

\paragraph{\textbf{Fonctions lisses}}  Une fonction $u:X\rightarrow\R$ est dite de classe $C^k_l$ quand sa restriction à toute feuille est de classe $C^k$  et, de plus, varie continûment dans la topologie $C^k_\text{loc}$ avec le paramètre transverse dans toute carte laminée. Suivant Candel (cf. \cite{Candel}), l'espace de ces fonctions est noté $C^k_l(X)$. Nous munissons cet espace d'une structure naturelle d'espace de Banach de la façon suivante. Soit $(U_i,T_i,\phi_i)_{i\in I}$ un atlas laminé fini et soit $(\xi_i)_{i\in I}$ une partition de l'unité subordonnée à cet atlas qui soit de classe $\Cil$ (une telle partition existe toujours d'après \cite[Proposition $1.1$]{Candel}). La norme de Banach sur $C^k_l(X)$ est alors définie par
$$
\|u\|_{C^k_l} := \sum_{i\in I}\left\|(\xi_iu)\circ\phi_i^{-1}\right\|_{C^0(T_i,C^k((-1,1)^d))}.
$$
En particulier, deux normes construites de cette façon sont uniformément équivalentes. Nous avons alors le résultat suivant.

\begin{lemma}
\label{smoothdense}
Pout tout $k\geq 1$, l'espace $C^\infty_l(X)$ est dense dans $C^k_l(X)$.
\end{lemma}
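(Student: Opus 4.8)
The plan is to approximate $u$ by mollifying it along the leaves inside laminated charts, and then to patch the local approximations together with a partition of unity. Fix once and for all a finite laminated atlas $(U_i,T_i,\phi_i)_{i\in I}$ of $X$ together with a $C^\infty_l$ partition of unity $(\xi_i)_{i\in I}$ subordinate to it, whose existence is guaranteed by \cite[Proposition $1.1$]{Candel}; this is also the data defining the Banach norm $\|\cdot\|_{C^k_l}$. Given $u\in C^k_l(X)$, write $u=\sum_{i\in I}\xi_i u$. Since multiplying a $C^k_l$ function by a $C^\infty_l$ function yields a $C^k_l$ function, each summand lies in $C^k_l(X)$ and is supported in a compact subset of $U_i$. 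Because $\|\cdot\|_{C^k_l}$ is subadditive and $I$ is finite, it will be enough to show that each $\xi_i u$ is a $C^k_l$-limit of elements of $C^\infty_l(X)$.

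Fix $i$ and set $v:=(\xi_i u)\circ\phi_i^{-1}\colon(-1,1)^d\times T_i\to\R$. As $\operatorname{supp}(\xi_i u)$ is a compact subset of $U_i$, its image under $\phi_i$ lies in $K\times S$ for some compact $K\subset(-1,1)^d$ and compact $S\subset T_i$; hence, for each $t$, $v(\cdot,t)$ is a $C^k$ function supported in $K$, which extends by $0$ to a compactly supported $C^k$ function on $\R^d$ that vanishes identically when $t\notin S$. Let $(\rho_\eps)_{\eps>0}$ be a standard approximation of the identity on $\R^d$ and put $v^\eps(\cdot,t):=v(\cdot,t)\ast\rho_\eps$. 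For $\eps$ smaller than the distance from $K$ to $\partial(-1,1)^d$, the function $v^\eps$ is supported in a compact subset of $(-1,1)^d\times T_i$; it is $C^\infty$ in the $\R^d$-variable, and, convolution with a fixed smooth compactly supported kernel being a bounded linear map from $C^0$ to $C^\infty$, the map $t\mapsto v^\eps(\cdot,t)$ is continuous into $C^\infty$. Therefore the function $w_i^\eps$ equal to $v^\eps\circ\phi_i$ on $U_i$ and to $0$ elsewhere is well defined and belongs to $C^\infty_l(X)$: it is $C^\infty_l$ on $U_i$ by construction and trivially so on the open set $X\setminus\operatorname{supp}(w_i^\eps)$, and these two open sets cover $X$ while being $C^\infty_l$ is a local property.

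It then remains to check that $\|\xi_i u-w_i^\eps\|_{C^k_l}\to 0$ as $\eps\to 0$. This will follow from the uniform convergence
\[
\sup_{t\in S}\big\|v^\eps(\cdot,t)-v(\cdot,t)\big\|_{C^k(\R^d)}\xrightarrow[\eps\to 0]{}0 .
\]
Indeed, by the very definition of $C^k_l$, the map $t\mapsto v(\cdot,t)$ is continuous from the compact set $S$ into the space of $C^k$ functions supported in $K$, so its image is compact there; applying Arzela-Ascoli to the derivatives of order $\le k$ then provides a modulus of continuity common to all $\partial^\alpha v(\cdot,t)$, $|\alpha|\le k$, $t\in S$, which is exactly what makes the elementary estimate $\|v(\cdot,t)\ast\rho_\eps-v(\cdot,t)\|_{C^k}\to 0$ hold uniformly in $t\in S$. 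To convert this into a bound on $\|\xi_i u-w_i^\eps\|_{C^k_l}$, observe that for every $j\in I$ with $U_i\cap U_j\neq\emptyset$ one has, on the relevant compacta, $(\xi_j(\xi_i u-w_i^\eps))\circ\phi_j^{-1}=(\xi_j\circ\phi_j^{-1})\cdot\big((v-v^\eps)\circ(\phi_i\circ\phi_j^{-1})\big)$; multiplication by the fixed $C^\infty$ function $\xi_j\circ\phi_j^{-1}$ and precomposition with the transition map $\phi_i\circ\phi_j^{-1}$ are bounded operations on $C^0(\,\cdot\,,C^k)$, the latter because the leafwise component of $\phi_i\circ\phi_j^{-1}$ is a $C^\infty$ diffeomorphism depending $C^\infty_{\text{loc}}$-continuously on the transverse parameter, hence uniformly $C^k$-bounded, together with its inverse, over the compact parameter set at hand. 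Summing the finitely many resulting terms gives $\|\xi_i u-w_i^\eps\|_{C^k_l}\le C_i\sup_{t\in S}\|v^\eps(\cdot,t)-v(\cdot,t)\|_{C^k(\R^d)}$, and then, summing over $i$, $\|u-\sum_{i\in I}w_i^\eps\|_{C^k_l}\to 0$ with $\sum_{i\in I}w_i^\eps\in C^\infty_l(X)$, as required.

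The main obstacle is the uniformity in the transverse parameter in the displayed limit: mollification along the leaves obviously gains regularity, but the $C^k_l$-norm is a supremum over the transverse parameter, so one must know the gain is quantitatively uniform over the (compact, by compactness of $X$) set of transverse parameters meeting the support — which is where Arzela-Ascoli enters. The two remaining ingredients, the reduction through the partition of unity and the comparison of the $C^k_l$-norm across overlapping charts, are routine; the only delicate point there is that precomposition with the transition maps inflates the relevant $C^k$-norms by at most atlas-dependent constants, which holds because these maps are smooth along the leaves and vary continuously, hence are $C^k$-bounded over the compact parameter sets involved.
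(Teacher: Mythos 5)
Your proof is correct: the decomposition via the $C^\infty_l$ partition of unity, leafwise mollification in each chart, and the uniformity in the transverse parameter obtained from compactness of the image of $t\mapsto v(\cdot,t)$ in $C^k$ (hence a common modulus of continuity) are all sound, as is the chart-comparison estimate using uniform $C^k$ bounds on the leafwise transition maps over the relevant compacta. The paper states this lemma without proof, treating it as standard, and your mollification-plus-partition-of-unity argument is precisely the expected one, so there is nothing substantive to contrast.
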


\paragraph{\textbf{Laminations riemanniennes}} Soit $\cL$ une lamination d'un espace métrisable compact $X$ et supposons que
\begin{itemize}
\item chaque feuille $L$ est munie d'une metrique riemannienne de classe $C^{\infty}$ notée $g_L$;
\item la métrique $g_L$ varie transversalement continûment dans les cartes pour la topologie $\Ciloc$.
\end{itemize}
La correspondance $g:L\mapsto g_L$ s'appelle une \emph{métrique laminée} lisse, nous disons qu'elle est de classe $C^\infty_l$. La donnée de $(X,\cL,g)$ s'appelle une \emph{lamination riemannienne}.

\begin{rem}
Nous n'entendons pas ici \emph{riemannien} dans son acception classique en théorie des feuilletages. Un \emph{feuilletage riemannien} désigne en général un feuilletage dont les applications d'holonomie préservent une certaine métrique transverse: voir le \S \ref{ss.variation_lisse}. Ici, les laminations riemanniennes sont vues comme des généralisations des variétés riemanniennes. Nous ne faisons aucune hypothèse sur la dynamique transverse.
\end{rem}

Par compacité de l'espace ambiant, les variétés riemanniennes $(L,g_L)$ sont complètes et à géométrie uniformément bornée (courbures sectionnelles uniformément bornées et rayons d'injectivité uniformément minorés). Une vérification minutieuse de ces faits se trouve dans un article de Lessa \cite{Lessa}.

\begin{rem}
\label{smoothcurvature}
Lorsque $\dim\cL=2$, la fonction $\kappa:X\to\R$ associant à tout  $x\in X$ la courbure gaussienne en $x$ de la surface riemannienne $(L_x,g_{L_x})$ appartient à $C^{\infty}_l(X)$. En particulier ses dérivées dans les feuilles sont uniformément bornées à tout ordre.
\end{rem}

\paragraph{\textbf{Structures conformes}}  Nous disons que deux métriques feuilletées $g$ et $g'$ sont \emph{conformément équivalentes} s'il existe $u\in\Cil(X)$ telle que dans toute feuille $L$, l'égalité suivante ait lieu:
$$g_L'=e^{2u} g_L.$$
Les classes d'équivalence de cette relation s'appellent \emph{structures conformes} de la lamination $\cL$.

\subsection{Espace des revêtements riemanniens des feuilles}\label{ss.compacite}

Soit $(X,\cL,g)$ une lamination riemannienne compacte de dimension $d$. Nous considérons une famille $\cRL$ de variétés riemanniennes pointées et peintes dans $X$ de dimension $d$. Elle est formée des quadruplets $(\overline{L},\bar{g}_L,\bar{p},\bar{e})$, tel qu'il existe un revêtement riemannien sur une certaine feuille $\pi:(\overline{L},\bar{g}_L,\bar p)\to(L,g,p)$ et où $\bar e=e\circ \pi:\overline{L}\to X$ ($e$ étant le plongement canonique de $L$ dans $X$). Le but de ce paragraphe est de donner une preuve du résultat suivant.

\begin{theorem}
\label{compactness_riemannian_covers}
Soit $(X,\cL,g)$ une lamination riemannienne compacte. Alors la famille $\cRL$ est compacte pour la topologie de Cheeger-Gromov. Plus précisément soit $(\overline{L}_n,\overline{g}_{L_n},\bar{p}_n,\bar e_n)$ une suite d'éléments de $\cRL$. Alors cette suite possède une sous-suite convergente au sens de Cheeger-Gromov, et tout point d'accumulation est un quadruplet $(\overline{L},\bar{g}_L,\bar{p},\bar e)\in\cRL$.
\end{theorem}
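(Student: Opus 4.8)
Je propose de décomposer l'énoncé en deux: d'une part la \emph{précompacité} de $\cRL$ au sens de Cheeger-Gromov, d'autre part la \emph{fermeture} de $\cRL$, c'est-à-dire le fait que tout point d'accumulation d'une suite d'éléments de $\cRL$ appartienne encore à $\cRL$. Pour la précompacité, je commencerais par rappeler que, $X$ étant compact et la métrique laminée de classe $\Cil$, les feuilles sont à géométrie uniformément bornée (voir \cite{Lessa}); il en va de même de leurs revêtements riemanniens, car un revêtement ne peut que faire croître le rayon d'injectivité, les courbures restant inchangées. En particulier, en fixant un atlas laminé fini, il existe $\rho_0>0$ tel que, pour tout $(\overline L,\bar g_L,\bar p,\bar e)\in\cRL$ et tout $\bar q\in\overline L$, l'application $\bar e$ envoie $B_{\overline L}(\bar q,\rho_0)$ difféomorphiquement et isométriquement (pour la métrique des feuilles) sur un ouvert d'une plaque de l'atlas. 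Le théorème de compacité de Gromov (\cite{Ch,Gr,Lessa,Pet}) fournit alors, pour toute suite $(\overline L_n,\bar g_{L_n},\bar p_n)$ d'éléments de $\cRL$ et quitte à extraire, une limite au sens de Cheeger-Gromov $(\overline L,\bar g_L,\bar p)$, munie d'une suite $(F_n)$ d'applications de convergence. Comme les $\bar e_n$ sont uniformément équicontinues (grâce au contrôle précédent et à la finitude de l'atlas) et à valeurs dans le compact $X$, et comme les $F_n$ finissent par être uniformément bi-Lipschitz sur tout compact, la suite $(\bar e_n\circ F_n)$ est équicontinue et bornée: par Arzela-Ascoli, quitte à extraire de nouveau, elle converge dans la topologie $C^0_{\text{loc}}$ vers une application continue $\bar e:\overline L\to X$, ce qui établit la convergence des quadruplets.

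Le point essentiel est ensuite de montrer que le quadruplet limite $(\overline L,\bar g_L,\bar p,\bar e)$ appartient à $\cRL$. Posons $p=\bar e(\bar p)$ et $L=L_p$. Je vérifierais d'abord que $\bar e$ est, au voisinage de chaque point $\bar q\in\overline L$, un difféomorphisme lisse sur un ouvert d'une plaque, et une isométrie pour la métrique des feuilles. En effet, pour $\delta>0$ assez petit et $n$ assez grand, $F_n$ est un difféomorphisme de $B_{\overline L}(\bar q,\delta)$ sur son image, laquelle est contenue dans une boule de rayon $\rho_0$; donc $\bar e_n\circ F_n$ envoie $B_{\overline L}(\bar q,\delta)$ dans une plaque d'une carte $(U_{i_0},T_{i_0},\phi_{i_0})$ que, par finitude de l'atlas et quitte à extraire encore, on peut supposer indépendante de $n$. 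On écrit alors $\phi_{i_0}\circ\bar e_n\circ F_n=(\zeta_n,t_n)$, où $t_n\in T_{i_0}$ est constant et $\zeta_n:B_{\overline L}(\bar q,\delta)\to(-1,1)^d$ réalise une isométrie sur un ouvert de $\big((-1,1)^d,g_{i_0,t_n}\big)$, $g_{i_0,t}$ désignant la métrique de la plaque de paramètre $t$ lue dans la carte. Comme $F_n^\ast\bar g_{L_n}\to\bar g_L$ et $g_{i_0,t_n}\to g_{i_0,t}$ au sens $\Ciloc$ (la métrique laminée étant transversalement $\Ciloc$-continue), le Lemme \ref{equivalence_subtile} assure que $\zeta_n$ converge dans $\Ciloc$ vers une application $\zeta$ vérifiant $\zeta^\ast g_{i_0,t}=\bar g_L$; $\zeta$ étant nécessairement la première composante de $\phi_{i_0}\circ\bar e$ (application fixée dès le départ, les extractions supplémentaires ne servant qu'à l'identifier localement), ceci prouve l'assertion. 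On en déduit successivement que l'ensemble des $\bar q$ dont l'image par $\bar e$ tombe dans $L$ est ouvert et fermé, donc que $\bar e(\overline L)\dans L$ ($\overline L$ étant connexe comme limite de Cheeger-Gromov de variétés pointées connexes), puis que $\bar e$ définit une isométrie locale de la variété complète $(\overline L,\bar g_L)$ dans la feuille $(L,g_L)$, vue comme variété riemannienne connexe; c'est donc, par un théorème classique, un revêtement riemannien surjectif sur $L$. Ainsi $\bar e$ s'identifie à la composée de ce revêtement avec le plongement canonique $e:L\to X$, et comme $\bar e(\bar p)=p$, le quadruplet limite appartient bien à $\cRL$.

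La principale difficulté se situe dans cette seconde partie, et plus précisément dans le passage de la convergence $C^0$ des $\zeta_n$ à leur convergence $\Ciloc$ avec préservation de la métrique: c'est là qu'interviennent de manière cruciale l'hypothèse de continuité transverse au sens $\Ciloc$ de la métrique laminée (sous la forme $g_{i_0,t_n}\to g_{i_0,t}$) et le Lemme \ref{equivalence_subtile}. La multiplicité des extractions (indice de carte, sous-suites dépendant de $\bar q$) n'est qu'un obstacle apparent, l'application limite $\bar e$ étant déterminée avant toute extraction locale. Quant à la minoration uniforme du rayon d'injectivité des revêtements — nécessaire à la précompacité — elle repose sur la structure locale des laminations compactes et non sur une quelconque hypothèse de nature dynamique; je renverrais à \cite{Lessa} pour une vérification soignée.
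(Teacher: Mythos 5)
Votre démonstration est correcte et suit le même schéma global que celle de l'article (géométrie uniformément bornée des revêtements, donc précompacité au sens de Cheeger--Gromov, puis identification de la limite peinte comme revêtement riemannien d'une feuille), mais l'étape clé est traitée par une voie sensiblement différente. L'article établit d'abord un théorème d'Arzela--Ascoli \emph{laminé} (Théorème \ref{Ascoli}): les applications $\bar e_n\circ F_n$ ayant toutes leurs dérivées uniformément bornées sur les compacts, elles convergent (à extraction près) au sens $\Ciloc$ vers une application lisse à valeurs dans une feuille, dont on constate ensuite qu'elle est une isométrie locale; la preuve de ce théorème concentre la difficulté propre aux laminations (absence de métrique ambiante globale comparant distance feuilletée et distance dans $X$), résolue via un bon atlas, les distances locales $d_i$ et la comparaison $d_i\leq\lambda\,\dist_{\cL}$ sur les plaques --- c'est exactement le point que vous expédiez en invoquant ``le contrôle précédent et la finitude de l'atlas'' pour l'équicontinuité, affirmation juste mais qui mérite la même précaution. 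Vous n'extrayez, vous, qu'une convergence $C^0_{\text{loc}}$ par Arzela--Ascoli ordinaire, puis vous exploitez le caractère \emph{isométrique} des $\bar e_n\circ F_n$: lecture en carte, coordonnée transverse constante, $g_{i_0,t_n}\to g_{i_0,t}$ par continuité transverse, et Lemme \ref{equivalence_subtile} pour promouvoir la limite en isométrie locale lisse sur une plaque (en notant, ce qui découle de la convergence $C^0$, que les images restent dans un compact fixe de la carte et que $t_n\to t$). Votre approche évite donc de démontrer des bornes uniformes sur les dérivées à tout ordre et le théorème \ref{Ascoli} dans sa généralité, au prix d'un argument local de rigidité; celle de l'article fournit en contrepartie un outil plus général (applications quelconques à dérivées bornées), réutilisé ensuite dans le Lemme \ref{l.carac_Cil} et la Remarque \ref{rem_fonctions_CG}. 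Les deux conclusions finales (isométrie locale d'une variété complète connexe vers une feuille connexe, donc revêtement riemannien, donc limite dans $\cRL$) coïncident.
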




Le Théorème \ref{compactness_riemannian_covers} est une version faible d'un théorème dû à Lessa (cf \cite[Theorem 2.3.]{Lessa}). La preuve que nous présentons ci-dessous simplifie certains de ses arguments. Nous commençons par une version laminée du théorème d'Arzela-Ascoli.

\begin{theorem}[Arzela-Ascoli laminé]
\label{Ascoli}
Soit $(X,\cL,g)$ une lamination riemannienne compacte et $M$ une variété riemannienne. Soit $\Phi_n:M\to X$  une suite d'applications envoyant $M$ dans les feuilles de $\cL$ de façon lisse. Supposons que pour tout compact $K\dans M$ et tout entier $k\in\N$, il existe $A_k>0$ tel que pour tout $n$
$$\left|\left|D^k\Phi_n|_K\right|\right|\leq A_k.$$
Il existe alors une application lisse $\Phi:M\to X$ envoyant $M$ dans une feuille de $\cL$ vers laquelle converge une sous-suite de $(\Phi_{n})$ dans la topologie $\Ciloc$.
\end{theorem}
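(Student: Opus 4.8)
Le plan est de ramener l'énoncé au théorème d'Arzela--Ascoli classique — dans sa version avec bornes sur les dérivées de tout ordre — en travaillant dans des cartes laminées, le point central étant de transférer \emph{uniformément en $n$} les bornes sur les dérivées des $\Phi_n$, mesurées à l'aide des métriques riemanniennes, en des bornes sur les dérivées euclidiennes lues dans les coordonnées des cartes.

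Je fixerais d'abord un atlas laminé fini $(U_i,T_i,\phi_i)_{i\in I}$ de $(X,\cL)$. La $\Ciloc$-continuité transverse de $g$ jointe à la compacité de $X$ fournit, quitte à rétrécir légèrement les cartes, deux faits uniformes bien connus (voir \cite{Lessa}) : il existe $\rho_0>0$ tel que toute boule feuilletée de rayon $\le\rho_0$ soit incluse dans une unique plaque d'une des cartes ; et, pour chaque $i$, la métrique $g_L$ d'une plaque lue dans les coordonnées $\phi_i$ est une métrique $\gamma_t$ sur $(-1,1)^d$ dont les $\Ciloc$-normes de $\gamma_t$ et $\gamma_t^{-1}$ sont majorées uniformément en $t\in T_i$. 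Je recouvrirais ensuite $M$ (que l'on suppose connexe, comme il est d'usage) par une famille dénombrable de boules $B_m=B_M(x_m,\rho_m)$ à adhérence compacte, avec $\rho_m$ assez petit pour que l'image $\Phi_n(B_m)$ soit de diamètre feuilleté inférieur à $\rho_0$ pour tout $n$, ce qui est possible puisque $\|D\Phi_n\|$ est majoré uniformément en $n$ sur $\overline{B_m}$.

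Je traiterais alors la première boule $B_1$. Par compacité de $X$ et une première extraction, $\Phi_n(x_1)$ converge ; comme $\Phi_n(B_1)$ est un connexe de diamètre feuilleté inférieur à $\rho_0$, il est, pour $n$ assez grand, contenu dans une unique plaque d'une carte, et quitte à extraire encore cette carte est une carte fixée $U_i$, la plaque correspondant à un paramètre $t_n\in T_i$. En posant $\zeta_n:=\mathrm{pr}_1\circ\phi_i\circ\Phi_n|_{B_1}:B_1\to(-1,1)^d$, les bornes uniformes sur $\gamma_t$ et $\gamma_t^{-1}$ permettent de comparer la connexion de Levi-Civita de $g_{L_n}$ à la connexion euclidienne, et d'en déduire que, pour chaque $k$, les dérivées euclidiennes $D^k\zeta_n$ sont majorées uniformément en $n$ sur $B_1$. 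Le théorème d'Arzela--Ascoli classique et un argument diagonal en $k$ donnent, après une nouvelle extraction, une application lisse $\zeta_\infty:B_1\to(-1,1)^d$ limite $\Ciloc$ des $\zeta_n$ ; et $t_n$ converge vers un $t_\infty\in T_i$ puisque $\phi_i(\Phi_n(x_1))=(\zeta_n(x_1),t_n)$ converge. L'application $\Phi^{(1)}:=\phi_i^{-1}\big(\zeta_\infty(\cdot),t_\infty\big)$ est lisse, à valeurs dans la plaque de paramètre $t_\infty$, et $\Phi_n|_{B_1}$ converge vers $\Phi^{(1)}$ au sens $\Ciloc$ (convergence $C^\infty$ de la composante selon $\R^d$, convergence $C^0$ de la composante transverse).

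En itérant ce procédé sur $B_2,B_3,\dots$ puis en passant à une sous-suite diagonale, j'obtiendrais une unique sous-suite de $(\Phi_n)$ convergeant sur chaque $B_m$, au sens $\Ciloc$, vers une limite locale $\Phi^{(m)}$. Par unicité des limites, les $\Phi^{(m)}$ coïncident sur les intersections et se recollent en une application lisse $\Phi:M\to X$ envoyant chaque $B_m$ dans une plaque ; comme $\Phi^{(m)}(B_m)$ et $\Phi^{(m')}(B_{m'})$ se rencontrent dès que $B_m\cap B_{m'}\neq\vide$, la connexité de $M$ force $\Phi$ à prendre ses valeurs dans une seule feuille. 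La convergence $\Ciloc$ globale de la sous-suite extraite vers $\Phi$ en résulte. Le principal obstacle me semble être précisément ce transfert \emph{uniforme} des bornes sur les dérivées : une fois acquise la géométrie uniformément bornée des feuilles, le reste n'est qu'une variante directe d'Arzela--Ascoli assortie d'un argument de recollement.
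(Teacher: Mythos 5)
Votre preuve est correcte et suit essentiellement la même démarche que celle de l'article : localisation dans des cartes laminées grâce à la borne uniforme sur $\|D\Phi_n\|$ (de sorte que l'image d'une petite boule de $M$ reste dans une carte, donc dans une plaque), transfert des bornes sur les dérivées aux coordonnées, Arzela--Ascoli classique, argument diagonal, puis recollement par connexité pour conclure que la limite est à valeurs dans une seule feuille. La seule différence est de présentation : l'article obtient l'équicontinuité via une distance produit $d_i$ sur une carte élargie et la comparaison $d_i\leq\lambda\,\dist_{\cL}$, tandis que vous séparez explicitement la composante tangentielle $\zeta_n$ du paramètre transverse $t_n$, ce qui revient au même.
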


\begin{proof}[Preuve]
Prenons un atlas fini $\cA=(U_i,T_i,\phi_i)_{i\in I}$ de cartes laminées. Nous demandons que $\cA$ soit un \emph{bon atlas laminé} au sens où les applications $\phi_i$ s'étendent en des cartes laminées, encore notées $\phi_i:V_i\to(-2,2)^d\times S_i$, où $V_i\supset U_i$ et $S_i$ contient un compact contenant $T_i$. L'ouvert $V_i$ vient avec un système continu de coordonnées transverses $\gamma_i:V_i\to S_i$ obtenu par composition de $\phi_i$ avec la projection sur la seconde coordonnée. Ainsi les $\gamma_i^{-1}(s)$, $s\in S_i$ sont précisément les plaques de $V_i$. Prenons $\epsilon>0$ tel que pour tout $i\in I$ et $x\in U_i$, la boule $B_{\cL}(x,\epsilon)$ centrée en $x$ et de rayon $\epsilon$ à l'intérieur de la feuille $L_x$, soit incluse dans $V_i$.

Soit $Y\dans M$ un ensemble dénombrable et dense. Par compacité de $X$, un argument diagonal nous permet de nous restreindre au cas où la restriction de $\Phi_n$ à $Y$ converge ponctuellement vers une application $\Phi:Y\to X$. Soit $(K_m)_{m\in\N}$ une exhaustion compacte de $M$ telle que pour tout $m$, $Y\cap K_m$ est dense dans $K_m$. Fixons $m\in\N$. Par hypothèse il existe $r>0$ tel que pour tous $x\in K_m$ et $n\in\N$, $\Phi_n(B_M(x,r))\dans B_{\cL}(\Phi_n(x),\epsilon)$. Notons que $K_m$ est recouvert par les boules $B_M(x,r)$ lorsque $x$ varie dans $Y\cap K_m$.

Choisissons à présent un point $y\in Y\cap K_m$ et un indice $i\in I$ tels que $\Phi(y)\in U_i$. En particulier il existe $n_0\in\N$ tel que pour tout $n\geq n_0$, $\Phi_n(y)\in U_i$. Ainsi, $\Phi_n(B_M(y,r))\dans B_{\cL}(\Phi_n(y),\epsilon)\dans V_i$.

Définissons une distance $d_i$ sur $V_i$ par la formule
$$d_i(\phi_i^{-1}(x,t),\phi^{-1}_i(y,s))=|x-y|+d_{S_i}(s,t),$$
où $d_{S_i}$ est n'importe quelle distance compatible avec la topologie de $S_i$. Par continuité transverse de la métrique feuilletée il existe une constante $\lambda>0$ telle que pour tous points $x,y\in V_i$ appartenant à la même plaque,
$$d_i(x,y)\leq \lambda \dist_{\cL}(x,y),$$
où $\dist_{\cL}$ représente la distance feuilletée.

Pour cette distance, la suite d'applications $(\Phi_n)_{n\geq n_0}$, restreintes à $B_M(y,r)$, et prenant valeurs dans le compact $\overline{V_i}$, est équicontinue. Il en est de même pour les suites formées par les dérivées d'ordre supérieur. Le théorème d'Arzela-Ascoli entraîne l'existence d'une sous-suite de $(\Phi_n)_{n\geq n_0}$ qui converge en restriction à $B_M(y,r)$, dans la topologie $\Ciloc$,   vers une fonction qui étend $\Phi$ et qu'on notera encore $\Phi$. De plus, puisque par hypothèse, la coordonnée transverse $\gamma_i$ est constante sur chaque $\Phi_n(B_M(y,r))$, elle doit également l'être sur $\Phi(B_M(y,r))$. En d'autres termes, $\Phi$ envoie les éléments de $B_M(y,r)$ dans une même plaque de $V_i$.

En appliquant un dernier argument diagonal, nous concluons que $(\Phi_n)$ possède une sous-suite qui converge dans la topologie $\Ciloc$ vers une application lisse $\Phi$ qui envoie $M$ tout entier dans une feuille de $\cL$.
\end{proof}

\begin{rem}
Lorsque $(X,\cL)$ est un feuilletage lisse d'une variété lisse, et que $g$ est une métrique riemannienne sur $X$ induisant une métrique dans les feuilles, la preuve est bien plus simple. En effet, dans ce cas la distance riemannienne $d$ sur $X$ vérifie
\begin{equation}\label{adaptee}
d(x,y)\leq\dist_{\cL}(x,y)
\end{equation}
pour toute paire de points $(x,y)$ appartenant à une même feuille. Le lemme précédent est alors une application immédiate du théorème d'Arzela-Ascoli.
\end{rem}

\begin{rem}
L'approche de Lessa pour prouver le Théorème \ref{Ascoli} pour une lamination quelconque consiste à démontrer l'existence d'une distance globale, qu'il appelle \emph{adaptée}, satisfaisant à \eqref{adaptee} pour toute paire de points appartenant à une même feuille. C'est un des principaux points techniques de son travail (cf. \cite[Lemma 9.1]{Lessa}). Nous pensons que l'approche proposée ici est plus simple.

\end{rem}

\begin{rem}\label{rem_fonctions_CG}
Si $u\in C^\infty_l(X)$ alors la suite de fonctions lisses sur $M$ données par $u\circ \Phi_n$ possède une sous-suite convergeant vers $u\circ\Phi$ au sens $\Ciloc$.
\end{rem}

\begin{proof}[Preuve du Théorème \ref{compactness_riemannian_covers}] Soit $(X,\cL,g)$ une lamination riemannienne compacte. Prenons une suite $(\overline{L}_n,\overline{g}_{L_n},\bar p_n,\bar e_n)$ d'éléments de $\cRL$. La suite de variétés pointées $(\overline{L}_n,\overline{g}_{L_n},\bar p_n)$ est à géométrie uniformément bornée. Elle possède par conséquent des sous-suites convergentes: nous renvoyons à \cite{Ch,Gr,Lessa,Pet} pour ce fait. Supposons qu'elle converge vers une variété riemannienne pointée $(\overline{L},\overline{g},\bar p)$. Soit $F_n:\overline{L}\to\overline{L}_n$ une suite d'applications de convergence.

Par définition des applications de convergence, les applications $\bar e_n\circ F_n$ ont toutes leurs dérivées uniformément bornées sur les compacts. Le théorème d'Arzela-Ascoli laminé (Théorème \ref{Ascoli}) entraîne qu'elle possède une sous-suite convergente dans la topologie $\Ciloc$ vers une application lisse $\bar e:\overline{L}\to X$, qui envoie $\overline{L}$ dans une feuille $L$. Par définition de $\bar e_n$ et $F_n$ cette application est une isométrie locale sur son image. Ainsi, puisque $L$ est connexe et que $\overline{L}$ est connexe et complète, c'est un revêtement riemannien sur son image. Ainsi $(\overline{L},\bar g,\bar p,\bar e)$ est un élément de $\cRL$, ce qui achève la preuve du théorème.
\end{proof}

\subsection{Espace des revêtements universels des feuilles}\label{ss.rev_univ}

Soit $\cUL$ la sous-famille de $\cRL$ formée par les revêtements universels de feuilles de $\cL$. Rappelons que la limite de Cheeger-Gromov d'une suite de variétés simplement connexes n'est pas nécessairement simplement connexe donc $\cUL$ pourrait ne pas être fermé dans $\cRL$. Nous donnons une condition topologique sur la lamination assurant que cette famille soit fermée. Ce résultat de fermeture est un ingrédient essentiel de la preuve du Théorème \ref{prescription_courboubouille}.

\begin{theorem}
\label{compactness_univ-cover}
Soit $(X,\cL,g)$ une lamination riemannienne compacte sans cycle évanouissant.  Alors la famille $\cUL$ est compacte pour la topologie de Cheeger-Gromov.
\end{theorem}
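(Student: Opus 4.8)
The plan is to combine the already-established compactness of $\cRL$ (Théorème \ref{compactness_riemannian_covers}) with the topological hypothesis ``sans cycle évanouissant'' to show that the limit of a sequence in $\cUL$ remains simply connected, hence lies in $\cUL$. Since $\cUL \subset \cRL$ and $\cRL$ is compact, every sequence $(\overline{L}_n,\overline{g}_{L_n},\bar p_n,\bar e_n)$ in $\cUL$ has a subsequence converging in the Cheeger--Gromov sense to some $(\overline{L},\bar g,\bar p,\bar e)\in\cRL$; the only thing to prove is that $\overline{L}$ is simply connected. Once that is known, the covering $\bar e\colon\overline{L}\to L$ onto the leaf $L$ it lands in is automatically the universal cover, so the limit is in $\cUL$ and the family is closed in the compact space $\cRL$, hence itself compact.

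To prove simple connectedness, I would argue by contradiction: suppose $\overline{L}$ is not simply connected, so there is a smooth embedded loop $c\colon S^1\to\overline{L}$ which is not null-homotopic in $\overline{L}$. Fix a relatively compact open set $U\subset\overline{L}$ containing the image of $c$, and let $F_n\colon\overline{L}\to\overline{L}_n$ be convergence maps, so that for $n$ large $F_n|_U$ is a diffeomorphism onto its image and $F_n^\ast \bar g_{L_n}\to\bar g$ on $U$. Then $c_n := F_n\circ c$ is a loop in $\overline{L}_n$; since $\overline{L}_n$ is simply connected, each $c_n$ bounds a disk, i.e.\ is null-homotopic, and moreover $\bar e_n\circ c_n = (\bar e_n\circ F_n)\circ c$ converges in $C^0$ to $\bar e\circ c$, a loop in the leaf $L$. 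The goal is to derive a contradiction with the non-triviality of $[c]$ in $\pi_1(\overline{L})$. The mechanism should be: a null-homotopy of $c_n$ in $\overline{L}_n$, pushed down by $\bar e_n$, produces a leafwise null-homotopy of the loop $\bar e_n\circ c_n$ inside a leaf of $\cL$; passing to the limit (using compactness of $X$ and a leafwise Arzela--Ascoli argument in the spirit of Théorème \ref{Ascoli} applied to the homotopy maps, once one controls their derivatives) yields a leafwise null-homotopy of $\bar e\circ c$ inside the leaf $L$. But the absence of vanishing cycles is precisely the statement that a loop which is leafwise null-homotopic through loops that shrink to a point must already be null-homotopic in the leaf through which it passes — transported back to $\overline{L}$ via the covering $\bar e$ and the fact that $\bar e$ is $\pi_1$-injective onto its image would force $[c]=0$, a contradiction.

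The main obstacle, and the place where the hypothesis ``sans cycle évanouissant'' must be used carefully, is controlling the homotopies $H_n$ that kill $c_n$ well enough to extract a limiting leafwise homotopy: a priori these disks can have wild geometry (large area, escaping every compact set), so one cannot directly apply Arzela--Ascoli to them. The standard fix is to replace arbitrary null-homotopies by geometrically controlled ones — either by using that the $\overline{L}_n$ have uniformly bounded geometry to take $H_n$ with controlled derivatives on compact sets, or, more robustly, to phrase the contradiction directly in terms of the holonomy: a non-trivial $[c]\in\pi_1(\overline{L})$ which dies in every approximating simply connected leaf is exactly a \emph{cycle évanouissant} in the sense of Haefliger for the lamination $(X,\cL)$, via the peinting maps $\bar e_n\to\bar e$. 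I would set up the definition of vanishing cycle so that this identification is essentially a tautology, and then the theorem follows. Concretely, the core lemma to isolate is: \emph{if $(X,\cL)$ has no vanishing cycle, then for any convergent sequence in $\cRL$ with simply connected terms, the limit is simply connected} — and its proof is the translation between the Cheeger--Gromov/peinting language of this paper and Haefliger's original definition.
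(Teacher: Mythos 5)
Your proposal is correct in outline and, in its final ``more robust'' form, is exactly the paper's argument; the only thing you could not see is that the paper's Définition \ref{defvancycle} of a \emph{cycle évanouissant} is precisely the ``tautological'' sequential one you anticipate: an essential loop $c$ in a leaf $L$ accumulated by loops $c_n$ that are trivial in leaves $L_n$, with no requirement that null-homotopies of the $c_n$ converge or be geometrically controlled in any way. Consequently the entire middle portion of your proposal --- extracting a limiting leafwise null-homotopy via a laminated Arzela--Ascoli argument, and the worry about homotopy disks of wild geometry --- is unnecessary: the paper simply observes that $c_n=\tilde e_n\circ F_n(\bar c)$ is trivial in $L_n$ (it is the image under the covering $\tilde e_n$ of a loop in the simply connected $\tL_n$), that $c_n\to \bar e(\bar c)=c$ uniformly because $\tilde e_n\circ F_n\to\bar e$ in $C^0_{\text{loc}}$, and that $c$ is essential in $L$ because the covering $\bar e$ is $\pi_1$-injective; this configuration is, by definition, a vanishing cycle, contradicting the hypothesis. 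So the ``core lemma'' you isolate at the end is the theorem itself, and with the paper's definition its proof is the short verification just described rather than a translation into Haefliger's framework (whose classical notion, requiring a continuous family of loops, is more demanding than the sequential one used here, so the paper's hypothesis ``sans cycle évanouissant'' is correspondingly a priori stronger --- which is what makes the tautological identification legitimate).
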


Ce théorème est à comparer avec le Corollaire \ref{compact_univ_covers}. Avant de prouver le théorème, définissons les cycles évanouissants. 

\begin{defn}
\label{defvancycle}
Soit $(X,\cL)$ une lamination compacte. Un \emph{cycle évanouissant} est un lacet $c$ inclus  \emph{et essentiel} dans une feuille $L$, qui est accumulé par une suite de lacets $c_n$ inclus \emph{et triviaux} dans des feuilles $L_n$.
\end{defn}
%

\paragraph{\textbf{Cycles évanouissants et feuilletages de $3$-variétés}}
L'existence d'un cycle évanouissant pour un \emph{feuilletage lisse} $\cF$ de dimension $2$ d'une  \emph{variété} $M$ de dimension $3$ impose de sévères restrictions topologiques. Si une feuille $L$ d'un tel feuilletage supporte un cycle évanouissant elle doit être un tore bordant un tore solide dans lequel $\cF$ induit le feuilletage de Reeb (c'est une \emph{composante de Reeb}): c'est le théorème de Novikov (voir par exemple \cite[Chapter 7]{CLN}). Nous renvoyons également au travail d'Alcalde Cuesta et Hector \cite{Alcalde_Hector}, pour un critère de trivialité pour les cycles évanouissants pour les feuilletages par surfaces en codimension quelconque.

Nous rappelons que le feuilletage de Reeb est obtenu par le processus suivant. Partons du feuilletage du cylindre plein $\overline{\D}\times\R$ dont les feuilles sont les surfaces définies par $(x,\frac{|x^2|}{1-|x^2|}+a)$ où $a$ est un paramètre réel et $|x|<1$, et le bord. Ce feuilletage est invariant par translation $(x,t)\mapsto(x,t+1)$ et induit ce qu'on appelle le feuilletage de Reeb dans le tore solide $\overline{\D}\times\Ss^1$. Il a une feuille torique (son bord), et les autres feuilles sont toutes des plans s'accumulant au bord.

\begin{rem}
Il est nécessaire d'exclure l'existence de cycles évanouissants pour garantir la compacité de l'espace des revêtements universels des feuilles. En effet ce théorème ne s'applique pas au feuilletage de Reeb introduit ci-dessus. Pour toute suite de points $(p_n)$ intérieurs au tore solide convergeant vers un point du bord, la suite de feuilles $(L_{p_n},p_n)$, qui sont des plans, converge vers un cylindre (le revêtement d'holonomie du bord).
\end{rem}

\begin{proof}[Preuve du Théorème \ref{compactness_univ-cover}] D'après le Théorème \ref{compactness_riemannian_covers} il ne nous reste qu'à prouver que si une suite de revêtements universels pointés de feuilles converge au sens de Cheeger-Gromov, alors la limite est simplement connexe. 

Pour ce faire, prenons une suite $(\tL_n,\widetilde{g}_{L_n},\tilde p_n,\tilde e_n)$ d'éléments de $\cUL$ convergeant vers $(\overline{L},\overline{g}_L,\bar p,\bar e)$ au sens de Cheeger-Gromov et soit $F_n:\overline{L}\to\tL_n$ une suite d'applications de convergence. Supposons que $\overline{L}$ ne soit pas simplement connexe. Alors cette variété contient un lacet \emph{essentiel} $\bar c$. Sa projection sur $L$, notée $c=\bar e(\bar c)$, l'est également. De plus, lorsque $n$ est suffisamment grand, $c_n=\tilde e_n\circ F_n(\bar c)$ est une suite de lacets \emph{triviaux} inclus dans $L_n$ et qui converge uniformément vers $c$. Ceci implique que $c$ est un cycle évanouissant, contredisant l'hypothèse.
\end{proof}

Le résultat suivant est bien connu, et découle directement de la stabilité des géodésiques fermées en courbure négative (cf. \cite{BGS}). Il permet d'appliquer le Théorème \ref{compactness_univ-cover} pour les laminations dont les feuilles sont à courbure sectionnelle négative.

\begin{proposition}\label{no_vn_cycles} Soit $(X,\cL)$ une lamination riemannienne compacte dont les feuilles sont à courbure sectionnelle négative. Alors $(X,\cL)$ n'a pas de cycle évanouissant. En particulier l'espace $\cUL$ des revêtements universels pointés de feuilles de $\cL$ est compact pour la topologie de Cheeger-Gromov.
\end{proposition}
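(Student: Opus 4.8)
Le ``en particulier'' se déduira immédiatement du Théorème \ref{compactness_univ-cover} une fois l'absence de cycle évanouissant établie, de sorte que tout le travail porte sur ce dernier point. Le plan est de raisonner par l'absurde: je supposerais qu'il existe un lacet $c$ essentiel dans une feuille $L$, basé en un point $x$, accumulé par des lacets $c_n$ triviaux dans des feuilles $L_{x_n}$, avec $x_n\to x$. Une première étape, purement préliminaire, consiste à se ramener au cas où les longueurs $\ell(c_n)$ sont uniformément majorées par une constante $\Lambda$: pour $n$ grand, $c_n$ reste dans un petit voisinage de l'image (compacte) de $c$, et l'on peut le remplacer par un lacet feuilleté-géodésique par morceaux l'approchant (concaténation de courtes géodésiques joignant des points $c_n(t_j)$ dans des plaques communes) sans changer sa classe d'homotopie libre dans $L_{x_n}$ ni sa limite $c$. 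Cette borne est d'ailleurs automatique dans les situations où de tels cycles apparaissent, cf. la preuve du Théorème \ref{compactness_univ-cover}.

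Le cœur de l'argument, et le seul endroit où intervient la courbure, est l'observation suivante, qui est la version quantitative de la stabilité des géodésiques fermées citée dans \cite{BGS}: en courbure $\leq 0$, un lacet trivial se contracte dans une région dont la taille est contrôlée par sa longueur. En effet, les feuilles étant complètes et de courbure sectionnelle $\leq 0$, le revêtement universel $\widetilde{L}_{x_n}$ est une variété de Hadamard par le théorème de Cartan--Hadamard: deux points y sont joints par une unique géodésique. Comme $c_n$ est trivial dans $L_{x_n}$, il se relève en un \emph{lacet} $\widetilde{c}_n$ de $\widetilde{L}_{x_n}$; si $\widetilde{x}_n$ est un point de $\widetilde{c}_n$ au-dessus de $x_n$, tout point de $\widetilde{c}_n$ est à distance au plus $\ell(c_n)/2\leq\Lambda/2$ de $\widetilde{x}_n$, et le cône géodésique de sommet $\widetilde{x}_n$ contracte $\widetilde{c}_n$ sur $\widetilde{x}_n$ en restant dans $\overline{B}_{\widetilde{L}_{x_n}}(\widetilde{x}_n,\Lambda/2)$. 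En projetant par le revêtement $\widetilde{L}_{x_n}\to L_{x_n}$, qui ne fait pas croître les distances, on obtient que $c_n$ se contracte dans $L_{x_n}$ en restant dans la boule $\overline{B}_{L_{x_n}}(x_n,\Lambda/2)$.

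Il resterait à passer à la limite de Cheeger--Gromov pour faire ``redescendre'' cette trivialité jusqu'à $c$. Les quadruplets $(L_{x_n},g,x_n,e_n)$ appartiennent à $\cRL$; par le Théorème \ref{compactness_riemannian_covers}, quitte à extraire, ils convergent vers $(\overline{L},\bar g,\bar p,\bar e)\in\cRL$, si bien que $\bar e=e\circ\pi$ pour un revêtement riemannien $\pi:\overline{L}\to L'$ sur une feuille $L'$; comme $\bar e_n\circ F_n\to\bar e$ et $F_n(\bar p)=x_n\to x$, on a $\bar e(\bar p)=x$, donc $L'=L$. Fixons les applications de convergence $(F_n)$. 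Pour $n$ grand, $F_n$ est un difféomorphisme de $\overline{B}_{\overline{L}}(\bar p,\Lambda)$ sur son image, laquelle contient $\overline{B}_{L_{x_n}}(x_n,\Lambda/2)$; ainsi $\bar c_n:=F_n^{-1}\circ c_n$ est un lacet de $\overline{L}$ basé en $\bar p$, trivial dans $\overline{L}$ puisqu'on y transporte par $F_n^{-1}$ la contraction obtenue ci-dessus. Son image $\bar e\circ\bar c_n$ est donc un lacet de $L$ basé en $x$ et trivial dans $L$. Enfin, comme $\bar e_n\circ F_n\to\bar e$ uniformément sur le compact $\overline{B}_{\overline{L}}(\bar p,\Lambda)$ et que $c_n\to c$ uniformément, le lacet $\bar e\circ\bar c_n$ est $C^0$-proche de $c$ dès que $n$ est grand; ces deux lacets basés en $x$ sont alors homotopes à extrémités fixées, ce qui force $[c]=[\bar e\circ\bar c_n]=1$ dans $\pi_1(L,x)$ et contredit l'essentialité de $c$. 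Le Théorème \ref{compactness_univ-cover} fournit alors la compacité de $\cUL$.

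Les étapes s'enchaînent sans difficulté sérieuse; le point le plus délicat est d'identifier correctement la limite de Cheeger--Gromov des \emph{feuilles} $L_{x_n}$ (et non de leurs revêtements universels) comme un revêtement riemannien de $L$ lui-même --- c'est exactement ce que fournit le Théorème \ref{compactness_riemannian_covers} --- car c'est cela qui autorise le passage de la trivialité de $\bar c_n$ dans $\overline{L}$ à celle de $c$ dans $L$. L'hypothèse de courbure négative, quant à elle, ne sert qu'à produire le contrôle uniforme $\Lambda/2$ de la taille des contractions via Cartan--Hadamard (une courbure $\leq 0$ suffirait même à cette étape).
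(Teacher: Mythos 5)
Votre architecture générale est saine et constitue une vraie démonstration là où l'article se contente de renvoyer à la stabilité des géodésiques fermées de \cite{BGS}: la réduction à des lacets de longueur bornée, la contraction de taille contrôlée par Cartan--Hadamard dans les revêtements universels, et l'identification de la limite des $(L_{x_n},g,x_n,e_n)$ comme revêtement riemannien de $L$ via le Théorème \ref{compactness_riemannian_covers} sont correctement menées. En revanche, la dernière inférence présente une lacune réelle: vous concluez que $\bar e\circ\bar c_n$ et $c$ sont homotopes dans $L$ du seul fait qu'ils sont « $C^0$-proches », or cette proximité n'est établie que dans l'espace ambiant $X$ (elle provient des convergences $\bar e_n\circ F_n\to\bar e$ et $c_n\to c$, qui ont lieu dans $X$). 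Deux lacets d'une même feuille, uniformément proches dans $X$, ne sont en général ni proches pour la métrique feuilletée ni homotopes dans la feuille: c'est exactement le phénomène d'accumulation d'une feuille sur elle-même qui rend la notion de cycle évanouissant non triviale, et la Définition \ref{defvancycle} autorise d'ailleurs $L_n=L$; l'implication que vous invoquez à cette étape est donc essentiellement l'énoncé à démontrer dans ce cas particulier, et elle ne saurait découler de la seule proximité dans $X$.

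La lacune se répare avec les objets que vous avez déjà construits, à condition d'effectuer la comparaison dans la variété fixe $\overline L$ et non dans $X$. Après votre remplacement géodésique par morceaux, les $c_n$ sont uniformément lipschitziens; comme $F_n^\ast g_n\to\bar g$ sur $\overline B_{\overline L}(\bar p,\Lambda)$, les lacets $\bar c_n=F_n^{-1}\circ c_n$ sont uniformément lipschitziens à valeurs dans ce compact fixe, et Arzelà--Ascoli fournit une sous-suite convergeant uniformément vers un lacet $\bar c_\infty$ basé en $\bar p$. La convergence uniforme de $\bar e_n\circ F_n$ vers $\bar e$ sur ce compact donne alors $\bar e\circ\bar c_\infty=\lim c_n$ dans $X$, c'est-à-dire, $e$ étant injective, $\pi\circ\bar c_\infty=c$ en tant que lacets de $L$ (en prenant pour $c$ l'approximation géodésique par morceaux, homotope à $c$ dans $L$: votre affirmation selon laquelle le remplacement ne change pas « la limite $c$ » est légèrement inexacte mais sans conséquence). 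Les $\bar c_n$ étant triviaux dans $\overline L$ — c'est ici que sert votre contraction de taille $\Lambda/2$ — et convergeant uniformément vers $\bar c_\infty$ dans la variété riemannienne fixe $\overline L$, une homotopie géodésique en dessous du rayon d'injectivité sur le compact montre que $\bar c_\infty$ est trivial, donc $c=\pi\circ\bar c_\infty$ est trivial dans $L$: contradiction. Avec cette correction, votre preuve est complète; elle est nettement plus détaillée que celle de l'article, qui se réduit à la citation de \cite{BGS}, et elle reste dans l'esprit de la preuve du Théorème \ref{compactness_univ-cover}.
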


\begin{rem}\label{rem_groupoide}
Le rapporteur nous a fait remarquer l'existence d'un plongement naturel $X\to\cUL$ dont la construction utilise le \emph{groupoïde d'homotopie} $\Pi_1(\cL)$ de la lamination. Rappelons que cet objet est obtenu comme le quotient de l'espace des chemins tangents à la lamination quotienté par la relation d'homotopie et qu'il vient avec deux projections $\alpha$ et $\beta$ sur $X$ associant respectivement à chaque classe d'homotopie $[\gamma]$ les points initial et final de n'importe quel représentant $\gamma$ (voir  \cite{Alcalde_groupoides} pour plus de détails). Ainsi, étant donné $p\in X$ et $L=L_p$, l'application $\beta:\alpha^{-1}(p)\to \beta(\alpha^{-1}(p))$ est une application de revêtement universel de $L$, qu'on peut pointer en la classe de lacets triviaux, et qui est naturellement peinte dans $X$. Ceci fournit donc une injection $X\to\cUL$ qui est continue dès lors que $\cL$ n'a pas de cycle évanouissant (par le Théorème \ref{compactness_univ-cover}). Il serait intéressant d'étudier cette application plus en détail.
\end{rem}

\subsection{Caractérisation des familles de métriques laminées}\label{ss.carac_metrique_laminee} Une famille $(h_L)_{L\in\cL}$  de métriques riemanniennes complètes dans les feuilles d'une lamination riemannienne compacte fournit une famille de métriques sur les éléments $(\overline{L},\bar g,\bar p, \bar e)$ de l'espace $\cRL$ en posant $\bar h_L=\bar e^\ast h_L$.

\begin{lemma}[Caractérisation des métriques $C^\infty_l$]\label{l.carac_Cil}
Soit $(X,\cL,g)$ une lamination riemannienne compacte et $h=(h_L)_{L\in\cL}$ une famille de métriques riemanniennes complètes définies dans les feuilles de $X$. Alors $h$ est de type $C^\infty_l$ si et seulement si elle induit une famille de métriques $(\bar h_L)_{L\in\cL}$ dans $\cRL$ qui soit continue  au sens de Cheeger-Gromov (voir le \S \ref{sss.continuite_metriques}). Lorsque, de plus, $\cUL$ est fermé dans $\cRL$, il suffit que $h$ induise une famille de métriques continues dans $\cUL$.
\end{lemma}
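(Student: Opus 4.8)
The plan is to prove the two implications of the equivalence separately, reducing both to one and the same computation in a laminated chart, and then to read off the refinement for $\cUL$. Before either direction I would record a preliminary fact that is used throughout: if $(\overline{L}_n,\bar g_{L_n},\bar p_n,\bar e_n)$ is a sequence of $\cRL$ converging to $(\overline{L},\bar g_L,\bar p,\bar e)\in\cRL$ in the Cheeger--Gromov sense and $(F_n)$ is an associated sequence of convergence maps (so $F_n^\ast\bar g_{L_n}\to\bar g_L$ in $\Ciloc$) for which $\bar e_n\circ F_n\to\bar e$ in $C^0_{\text{loc}}$, then in fact $\bar e_n\circ F_n\to\bar e$ in $\Ciloc$. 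Indeed $\bar e_n$ is a Riemannian covering onto a leaf, so $(\bar e_n\circ F_n)^\ast g=F_n^\ast\bar g_{L_n}\to\bar g_L$ in $\Ciloc$, whence on every compact set and for $n$ large $\bar e_n\circ F_n$ is a Riemannian local isometry between manifolds of the same dimension, with source metric $\Ciloc$-close to $\bar g_L$ and target a leaf of the compact lamination $(X,\cL,g)$; writing it in geodesic normal coordinates at source and target, where it is linear (equal to its differential, a linear isometry), and using the uniformly bounded geometry of $(X,\cL,g)$, one gets uniform bounds on all derivatives of $\bar e_n\circ F_n$ on compacts, and Arzela-Ascoli together with uniqueness of the $C^0_{\text{loc}}$ limit gives the claim.

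For the "only if" direction, assume $h$ is of type $C^\infty_l$ and take a Cheeger--Gromov convergent sequence in $\cRL$ as above. By the definition of convergence of painted manifolds there are convergence maps $(F_n)$ with $\bar e_n\circ F_n\to\bar e$ in $C^0_{\text{loc}}$, hence in $\Ciloc$ by the preliminary fact; it remains to check that $F_n^\ast\bar h_{L_n}\to\bar h_L$ in $\Ciloc$, which is local on $\overline{L}$. Around a point $x$, choose a laminated chart $(V,T,\phi)$ about $\bar e(x)$ and a connected relatively compact neighbourhood $W$ of $x$; for $n$ large $\bar e_n\circ F_n$ sends $W$ into a single plaque, with chart expression $w\mapsto(\psi_n(w),t_n)$ where $\psi_n\to\psi_\infty$ in $\Ciloc$ (the expression of $\bar e$) and $t_n\to t_\infty$ in $T$. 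Since $\bar h_{L_n}=\bar e_n^\ast h_{L_n}$ and the transverse coordinate is constant along $W$, one has $F_n^\ast\bar h_{L_n}|_W=\psi_n^\ast H(\cdot,t_n)$, where $H(\cdot,t)$ denotes the chart expression of $h$ on the plaque of transverse coordinate $t$; the hypothesis that $h$ is $C^\infty_l$ gives $H(\cdot,t_n)\to H(\cdot,t_\infty)$ in $\Ciloc$, which combined with $\psi_n\to\psi_\infty$ yields $F_n^\ast\bar h_{L_n}|_W\to\psi_\infty^\ast H(\cdot,t_\infty)=\bar h_L|_W$ in $\Ciloc$, as required (see \S\ref{sss.continuite_metriques}). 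Since Cheeger--Gromov continuity on $\cRL$ trivially restricts to the subfamily $\cUL\dans\cRL$, this also establishes the "only if" half of the refinement.

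For the converse I would argue by contradiction. Assume the induced family is Cheeger--Gromov continuous on $\cRL$ — or merely on $\cUL$, in which case I also use that $\cUL$, being closed in the compact space $\cRL$ (Theorem \ref{compactness_riemannian_covers}), is itself Cheeger--Gromov compact — but that $h$ is not $C^\infty_l$. As each $h_L$ is smooth, only transverse $\Ciloc$-continuity can fail, so there are a laminated chart $(U,T,\phi)$, a sequence $t_n\to t_\infty$ in $T$, a relatively compact $K\dans(-1,1)^d$, an integer $k$ and $\eps>0$ with $\sup_K|D^k(H(\cdot,t_n)-H(\cdot,t_\infty))|\geq\eps$ for all $n$; picking $z_n\in\overline K$ realising this up to a factor $2$ and, after extraction, $z_n\to z_\infty\in\overline K$, set $p_n=\phi^{-1}(z_n,t_n)$ and $p_\infty=\phi^{-1}(z_\infty,t_\infty)$, so $p_n\to p_\infty$. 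Feed the sequence $(L_{p_n},g,p_n,e_{p_n})\in\cRL$ — respectively the pointed universal covers $(\tL_{p_n},\widetilde g,\tilde p_n,\tilde e_n)\in\cUL$ — into the relevant compactness theorem, extract a Cheeger--Gromov limit that covers $L_{p_\infty}$ and sends its base point $\bar p$ to $p_\infty$, and apply the continuity hypothesis to get convergence maps $F_n$ with $\bar e_n\circ F_n\to\bar e$ in $C^0_{\text{loc}}$ (hence in $\Ciloc$, by the preliminary fact) and $F_n^\ast\bar h_{L_n}\to\bar e^\ast h_{L_{p_\infty}}$ in $\Ciloc$. Reading this near $\bar p$ in the chart $(U,T,\phi)$ exactly as above, $\bar e_n\circ F_n$ has expression $(\psi_n,t_n)$ with $\psi_n\to\psi_\infty$ in $\Ciloc$, $\psi_n(\bar p)=z_n$, and $\psi_\infty$ a diffeomorphism onto a neighbourhood of $z_\infty$, while $F_n^\ast\bar h_{L_n}=\psi_n^\ast H(\cdot,t_n)$; thus $\psi_n^\ast H(\cdot,t_n)\to\psi_\infty^\ast H(\cdot,t_\infty)$ in $\Ciloc$, and pushing forward by $\psi_n^{-1}$ (a diffeomorphism onto a fixed neighbourhood of $z_\infty$ for $n$ large) gives $H(\cdot,t_n)\to H(\cdot,t_\infty)$ in $\Ciloc$ on a neighbourhood of $z_\infty$, which contains $z_n$ for $n$ large; so $D^k(H(z_n,t_n)-H(z_n,t_\infty))\to 0$, contradicting the choice of $z_n$, and $h$ is therefore $C^\infty_l$.

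The step I expect to be the main obstacle is precisely the passage between the intrinsic Cheeger--Gromov continuity of the induced leaf metrics and the extrinsic transverse $\Ciloc$-continuity in laminated charts: on one hand the preliminary upgrade of the a priori merely $C^0_{\text{loc}}$ convergence of the painting maps $\bar e_n\circ F_n$ to $\Ciloc$ convergence — without which their chart expressions $\psi_n$ need not converge in $\Ciloc$, let alone be invertible — and on the other the careful bookkeeping that localises a hypothetical failure of transverse continuity at a single sequence of base points, so that the limiting leaf sees exactly the offending plaque. Everything else is the chain rule in a laminated chart together with the compactness statements already established for $\cRL$ and $\cUL$.
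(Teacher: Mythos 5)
Your proposal is correct and follows essentially the same route as the paper: apply the compactness of $\cRL$ (resp.\ of $\cUL$, closed in $\cRL$) to pointed covers based at the relevant points, invoke the Cheeger--Gromov continuity hypothesis to get convergence maps, upgrade the $C^0_{\text{loc}}$ convergence of the painting maps to $\Ciloc$ (your bounded-geometry/normal-coordinates argument is exactly what the laminated Arzela--Ascoli Théorème~\ref{Ascoli} furnishes, which the paper cites via the proof of the Théorème~\ref{compactness_riemannian_covers}), and then transfer the metrics through the laminated chart using that the painted maps are local diffeomorphisms onto the first factor, inverting them as in the Corollaire~\ref{conv_des_inverses}. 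The only difference is cosmetic: you organise the ``if'' direction by contradiction at near-maximising points $z_n$, whereas the paper argues directly for an arbitrary convergent sequence of chart parameters; the computation is the same.
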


\begin{proof}[Preuve] Supposons que $h$ induise une famille de métriques dans $\cRL$  continue au sens du \S \ref{sss.continuite_metriques}. Soit $(U,T,\phi)$ une carte laminée et $\psi:(-1,1)^d\times T\to U$ l'inverse de $\phi$. Soit $(x_n,t_n)\in(-1,1)^d \times T$ une suite convergeant vers $(x,t)\in(-1,1)^d\times T$. Notons $\pi_1:(-1,1)^d\times T\to(-1,1)^d$ la projection sur la première coordonnée et considérons les applications $\iota_n,\iota:(-1,1)^d\to(-1,1)^d\times T$ définies par $\iota_n(x)=(x,t_n)$ et $\iota(x)=(x,t)$. Enfin, notons $p_n=\psi(x_n,t_n)$, $L_n=L_{p_n}$, $g_n=g_{L_{p_n}}$, $h_n=h_{L_{p_n}}$ ainsi que $p=\psi(x,t)$, $L=L_{p}$, $g=g_{L_{p}}$ et $h=h_{L_p}$. Il suffit alors de prouver que
$$\lim_{n\to\infty}\iota_n^\ast\psi^\ast h_n=\iota^\ast\psi^\ast h,$$
où la limite est prise au sens $\Ciloc$. Par le Théorème \ref{compactness_riemannian_covers}, quitte à extraire, nous pouvons supposer que la famille $(L_n,g_n,p_n,e_n)$ converge au sens de Cheeger-Gromov vers $(\overline{L},\bar g,\bar p,\bar e)$: nous notons $\bar h=\bar e^\ast h$. Par continuité de la famille de métriques au sens du \S \ref{sss.continuite_metriques}, il existe une suite d'applications de convergence $(F_n)$  telle que $F_n^\ast h_n$ converge vers $\bar h$ et $e_n\circ F_n$ converge vers $\bar e$ au sens $\Ciloc$ (voir la preuve du Théorème \ref{compactness_riemannian_covers}).

Soit $V$ un voisinage de $\bar p$ dans $\overline{L}$ que $\bar e$ envoie difféomorphiquement sur son image. Supposons en outre que $\bar e(V)\dans U$. Ainsi l'application $\pi_1\circ\phi\circ\bar e$ envoie $V$ difféomorphiquement sur son image. Notons $\tau$ l'application inverse. Soit $W\dans (-1,1)^d$ un voisinage de $x$ dont l'adhérence est une partie compacte de l'image de $V$. Notons que $(\psi\circ\iota)\circ(\pi_1\circ\phi\circ\bar e)=\bar e$, d'où $\psi\circ\iota=\bar e\circ\tau.$ Ainsi
$$\iota^\ast\psi^\ast h=\tau^\ast\bar e^\ast h=\tau^\ast\bar h.$$

Considérons à présent un entier $n_0$ tel que pour tout $n\geq n_0$, $\pi_1\circ\phi\circ e_n\circ F_n$ envoie difféomorphiquement un voisinage de $p_n$ sur un ouvert contenant $\overline{W}$, et notons $\tau_n$ l'inverse de cette fonction. Nous obtenons de même que précédemment 
$$\iota_n^\ast\psi^\ast h_n=\tau_n^\ast F_n^\ast h_n. $$
Par continuité de la famille $h$ nous obtenons
$$\lim_{n\to\infty}\iota_n^\ast\psi^\ast h_n=\lim_{n\to\infty}\tau_n^\ast F_n^\ast  h_n = \tau^\ast e^\ast \bar h=\iota^\ast\psi^\ast h,$$
la convergence ayant lieu dans la topologie $\Ciloc$ sur l'ouvert $W\dans(-1,1)^d$. Cela prouve que la famille de métriques $(h_L)_{L\in\cL}$ est de type $C^\infty_l$. La réciproque est une conséquence immédiate de notre théorème d'Arzela-Ascoli laminé (voir le Théorème \ref{Ascoli}).
\end{proof}

\section{Prescription de courbure et uniformisation simultanée}
\label{simunif}

\subsection{Laminations par surfaces hyperboliques}

\paragraph{\textbf{Laminations par surfaces de Riemann}} Soit $(X,\cL)$ une lamination compacte de dimension $2$. Lorsque les applications de transition ``tangentielles'' $\zeta_{ij}(.,t)$ sont holomorphes, les feuilles sont naturellement munies de structures de surfaces de Riemann. Nous disons alors que $\cL$ est une \emph{lamination par surfaces de Riemann}.

Nous disons que les \emph{structures complexes} définies par deux laminations par surfaces de Riemann $\cL$ et $\cL'$ sur  $X$ sont \emph{équivalentes} s'il existe un homéomorphisme $h:X\to X$ envoyant feuilles de $\cL$ sur feuilles de $\cL'$ biholomorphiquement.

Les structures conformes et complexes sur une même lamination sont en correspondance bijective. Plus précisément, en utilisant la théorie d'Ahlfors-Bers (voir \cite{AhBe}), il est prouvé dans \cite[p. 232]{MS} (voir également \cite[Theorem 3.2]{Candel}) que les propriétés suivantes sont vérifiées par toute lamination par surfaces lisses $(X,\cL)$.

\begin{enumerate}
\item Pour toute métrique laminée $g$ sur $X$ il existe un atlas laminé $\cA=(U_i,T_i,\phi_i)_{i\in I}$ par \emph{cartes isothermes laminées}; c'est-à-dire que lorsqu'on la lit dans le système de coordonnées donné par $\phi_i$, la métrique laminée est conformément euclidienne dans les plaques.
\item Un tel atlas munit $\cL$ d'une structure complexe; les structures complexes associées à deux métriques laminées $g$ et $g'$ sont équivalentes si et seulement si les métriques sont conformément équivalentes.
\item Toute structure complexe est induite par une métrique laminée de cette façon.
\end{enumerate}

\paragraph{\textbf{Laminations par surfaces hyperboliques}} Une feuille d'une lamination par surfaces de Riemann peut être uniformisée soit par la sphère $\C\PP^1$, soit par le plan $\C$, soit par le disque $\D$. Nous disons que $(X,\cL)$ est une \emph{lamination par surfaces hyperboliques } si ses feuilles sont toutes uniformisées par le disque.

Candel prouve dans \cite[Theorem 4.3]{Candel} que le fait d'être une lamination par surfaces hyperboliques est \emph{topologique}, et indépendant de la structure complexe à proprement parler. En particulier, c'est le cas de toute lamination par surfaces sans mesure transverse invariante par holonomie. De nombreux exemples se trouvent dans \cite{Ghys_Laminations}. Nous renvoyons également à \cite{AB,ABMP} pour une étude topologique des surfaces pouvant apparaître comme feuilles de telles laminations.

\subsection{Uniformisation simultanée}\label{ss.uniformouille} Nous sommes désormais prêts à donner une preuve alternative du théorème d'uniformisation simultanée de Candel.

\begin{theorem}[Candel]\label{Candel}
Soit $(X,\cL)$ une lamination compacte par surfaces hyperboliques. Alors il existe, dans la classe conforme correspondante, une unique métrique laminée de courbure gaussienne constante égale à $-1$.
\end{theorem}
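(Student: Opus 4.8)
The plan is to deduce the theorem from the continuity of the uniformisation maps (Théorème~\ref{compact_unif}) together with the characterisation of $C^\infty_l$ laminated metrics in terms of Cheeger--Gromov continuity (Lemme~\ref{l.carac_Cil}). First I would fix, using the correspondence between conformal structures and laminated metrics recalled in this section, a background laminated metric $g$ of class $C^\infty_l$ representing the given conformal class. By the classical Poincaré--Kœbe uniformisation theorem, each leaf $L$ carries a unique complete conformal metric $h_L$ of constant curvature $-1$; uniqueness leaf by leaf being automatic, the only point is to show that the family $h=(h_L)_{L\in\cL}$ is a laminated metric, i.e. of class $C^\infty_l$. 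Equip each leaf $L$ with its canonical embedding $e\colon L\to X$, so that the leaves (and their universal covers) become pointed painted Riemannian manifolds; the family $\cM$ of such universal covers is the family $\widetilde{\cM}$ considered before Corollaire~\ref{compact_univ_covers}, and since the leaves of a lamination by hyperbolic surfaces are conformally hyperbolic with uniformly bounded geometry (and $\cM$ is invariant under change of base point), it satisfies the three hypotheses required for Théorème~\ref{compact_unif}.

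Next I would run the following argument. By Lemme~\ref{l.carac_Cil}, to prove $h$ is $C^\infty_l$ it suffices to show that $h$ induces a family of metrics on $\cRL$ (equivalently, using that leaves of negatively curved laminations have no vanishing cycle by Proposition~\ref{no_vn_cycles}, on the closed subfamily $\cUL$) that is continuous in the Cheeger--Gromov sense. So take a sequence $(\widetilde L_n,\widetilde g_{L_n},\widetilde p_n,\widetilde e_n)$ in $\cUL$ converging to $(\overline L,\overline g,\overline p,\overline e)$; by Théorème~\ref{compactness_univ-cover} the limit is again a (simply connected) element of $\cUL$, say the universal cover of a leaf $L$. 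Pulling back by uniformising maps $\Pi_n\colon(\D,0)\to(\widetilde L_n,\widetilde p_n)$, each $\widetilde L_n$ is identified with $(\D,\Pi_n^\ast g_{L_n},0)$, and under this identification the hyperbolic metric $h_{L_n}$ pulls back simply to the Poincaré metric $h_\D$. By Théorème~\ref{compact_unif} (and Remarque~\ref{rem_unif-peinture} for the painting), a subsequence of $(\Pi_n)$ converges in the Cheeger--Gromov sense to a uniformising map $\Pi\colon(\D,0)\to(\overline L,\overline p)$ with $e_n\circ\Pi_n\to\overline e\circ\Pi$ in $C^0_{\mathrm{loc}}$; so $(\D,\Pi_n^\ast g_{L_n},0)$ converges to $(\D,\Pi^\ast\overline g,0)$ with convergence maps the identity, while $\Pi_n^\ast h_{L_n}=h_\D=\Pi^\ast h_L$ for all $n$. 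Transporting this back via the convergence maps $F_n$ furnished by Théorème~\ref{compact_unif}, one reads off a sequence of convergence maps along which the pulled-back hyperbolic metrics converge to $\overline e^\ast h_L$ and $e_n\circ F_n\to\overline e$; this is exactly the Cheeger--Gromov continuity of the family of hyperbolic metrics demanded by \S\ref{sss.continuite_metriques}.

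Finally, Lemme~\ref{l.carac_Cil} then gives that $h=(h_L)_{L\in\cL}$ is of class $C^\infty_l$, hence a genuine laminated metric, necessarily conformal to $g$ (its pullback to $\D$ is conformal to $\Pi^\ast g$ leafwise, so $h_L=e^{2u_L}g_L$ with $u_L$ determined by the Poincaré uniformisation, and transverse continuity of the $u_L$ follows from that of $h$ and $g$). Uniqueness is immediate: any laminated metric of leafwise curvature $-1$ in the conformal class restricts on each leaf to the Poincaré metric, which is unique. The main obstacle is the Cheeger--Gromov continuity step, and there the real work has already been isolated and done in Théorème~\ref{compact_unif} — the genuinely delicate point being the uniform lower bound on $\|D\Pi_n\|$ (Lemme~\ref{l.lower_bound}), which prevents the uniformising maps from degenerating and is what forces the limit to again be hyperbolic rather than, say, parabolic; the lamination framework then only contributes the bookkeeping needed to upgrade leafwise data to transversally continuous data, handled by Théorème~\ref{Ascoli} and Lemme~\ref{l.carac_Cil}.
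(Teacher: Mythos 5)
Your overall strategy is exactly the paper's: pull back the leafwise Poincaré metrics by the uniformising maps, use Théorème \ref{compact_unif} to obtain Cheeger--Gromov continuity of the family of hyperbolic metrics on the space of pointed covers of leaves, and conclude with Lemme \ref{l.carac_Cil}; the key computation $F_n^\ast\bigl((\Pi_n)_\ast h_\D\bigr)\to\Pi_\ast h_\D$ is precisely the heart of the paper's proof, and your uniqueness remark is fine.

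There is, however, a genuine gap in how you secure the compactness/closedness of the family of universal covers. First, you apply Théorème \ref{compact_unif} with $\cM=\widetilde{\cM}$ (the universal covers) and justify its second hypothesis (compactness) by ``uniformly bounded geometry''; bounded geometry only gives precompactness, and the delicate point, stressed in the paper just before Corollaire \ref{compact_univ_covers}, is that a Cheeger--Gromov limit of simply connected surfaces need not be simply connected, so closedness of $\widetilde{\cM}$ is not automatic. Second, you deduce that $\cUL$ is closed in $\cRL$ from Proposition \ref{no_vn_cycles} (absence of vanishing cycles for \emph{negatively curved} laminations) and Théorème \ref{compactness_univ-cover}; but the background laminated metric $g$ chosen in the conformal class carries no curvature sign at this stage --- leafwise negative curvature is essentially what the theorem is about to produce --- so this step is circular as written. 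The correct wiring is the paper's: Théorème \ref{compactness_riemannian_covers} gives compactness of $\cRL$, whose elements are conformally hyperbolic (covers of disc-uniformised leaves) and whose definition is base-point invariant, so Théorème \ref{compact_unif} applies with $\cM=\cRL$, and Corollaire \ref{compact_univ_covers} then yields that $\cUL=\widetilde{\cM}$ is compact, hence closed in $\cRL$, which is what the second half of Lemme \ref{l.carac_Cil} requires. (Alternatively, you could first invoke Ghys' Théorème \ref{negcurvedmetrics} to replace $g$ by a conformally equivalent laminated metric with negatively curved leaves, after which Proposition \ref{no_vn_cycles} and Théorème \ref{compactness_univ-cover} do apply.) With this repair your argument coincides with the paper's proof.
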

\begin{proof}[Preuve] Munissons $(X,\cL)$ d'une métrique laminée $g$ dans la classe conforme correspondante. Le Théorème \ref{compact_unif} fournit la continuité des transformations d'uniformisation, et le Théorème  \ref{compactness_riemannian_covers}, la compacité de l'espace $\cRL$. Mieux: en utilisant le Corollaire \ref{compact_univ_covers} nous obtenons la compacité de l'espace $\cUL$ (qui est alors fermé dans $\cRL$). Soit alors $(\widetilde{L}_n,\tilde g_n,\tilde p_n,\tilde e_n)$ une suite d'éléments de $\cUL$ convergeant vers $(\widetilde{L},\tilde g,\tilde p,\tilde e)\in\cUL$, $(F_n)$ une suite d'applications de convergence correspondante, $\Pi_n:(\D,0)\to(\widetilde{L}_n,\tilde p_n)$ et $\Pi:(\D,0)\to(\widetilde{L},\tilde p)$ les applications d'uniformisation, et $\tilde h_n=\Pi_n\,_\ast h_\D$ et $\tilde h=\Pi_\ast h_\D$ les métriques hyperboliques dans la classe conforme. Nous avons alors $\tilde e_n\circ F_n\to\tilde e$ et $F_n^\ast\tilde h_n=F_n^\ast(\Pi_n\,_\ast h_\D)\to \Pi_\ast h_\D=\tilde h$  au sens $\Ciloc$ (par continuité des applications d'uniformisation). Ceci prouve la continuité de la famille $(\tilde h_L)_{L\in\cL}$. On en déduit, par le Lemme \ref{l.carac_Cil}, que la famille des métriques hyperboliques le long des feuilles de $\cL$ appartient à $C^\infty_l$.
\end{proof}

\subsection{Métriques laminées à courbure prescrite}\label{ss.met_lam_courb_pres}

Nous renforçons à présent le Théorème \ref{Candel} en prouvant le Théorème \ref{prescription_courboubouille}. La première étape dans la preuve de ce théorème est de démontrer l'existence d'une métrique laminée à courbure négative dans les feuilles. Le Théorème \ref{Candel} nous fournit une telle métrique. Nous pouvons également obtenir une telle métrique en utilisant un Théorème dû à Ghys, dont la preuve apparaît dans \cite{AY}, et que nous reproduisons en appendice pour la commodité du lecteur. Nous obtenons ainsi une autre preuve du Théorème \ref{Candel}.

\paragraph{\textbf{Espaces de H\"older}}  Nous commencerons par rappeler quelques concepts élémentaires de la théorie des espaces de H\"older (nous renvoyons à \cite{GT} pour plus d'informations).

Soit $(M,g)$ une variété riemannienne et $\alpha\in[0,1]$. La \emph{semi-norme de H\"older d'ordre $\alpha$} d'une fonction continue $f:M\to\R$ est définie par
$$[f]_\alpha=\sup_{x\neq y} \frac{|f(x)-f(y)|}{d(x,y)^\alpha},$$
où $d(x,y)$ représente la distance riemannienne par rapport à $g$. Les semi-normes de sections de fibrés sur $M$ se définissent de manière analogue. Pour tout $k\in\N$ et $\alpha\in[0,1]$, la \emph{norme de H\"older d'ordre $(k,\alpha)$} d'une fonction $k$ fois différentiable se définit par
$$\|f\|_{C^{k,\alpha}}=\sum_{i=0}^k\|\nabla^i f\|_{C^0}+\left[\nabla^k f\right]_\alpha,$$
$\nabla$ représentant la dérivée covariante de Levi-Civita sur $M$. Nous définissons alors l'\emph{espace de H\"older d'ordre $(k,\alpha)$} comme
$$C^{k,\alpha}(M)=\left\{f\in C^k(M);\,\|f\|_{C^{k,\alpha}}<\infty\right\}.$$

\begin{rem}\label{rem.Banach}
L'espace $C^{k,\alpha}(M)$, muni de la norme $||.||_{C^{k,\alpha}}$, est un espace de Banach.
\end{rem}

\begin{rem}\label{rem.normes_equ}
Notons que la norme de Hölder d'ordre $(k,\alpha)$ est équivalente à la norme donnée par
$$\|f\|_{C^{k,\alpha}}'=\sup_{x\in M}\|f\|_{C^{k,\alpha}(B(x,1))}.$$
\end{rem}

Finalement, nous définissons
$$C^{\infty}_b(M)=\bigcap_{k+\alpha\geq 0}C^{k+\alpha}(M).$$
Remarquons que cet espace diffère de l'espace $C^\infty(M)$ en ce que des éléments de ce dernier espace pourraient être arbitrairement grands près de l'infini.

\paragraph{\textbf{Prescription de courbure}} Afin de prouver le Théorème \ref{prescription_courboubouille} nous allons énoncer un théorème d'Aviles-McOwen et Bland-Kalka (voir \cite{AMc,BK}) sous une forme un peu plus générale. Nous en donnerons une preuve directe dans la Section \ref{s.presctiption} car nous aurons besoin de bornes uniformes des normes de H\"older pour prouver le Théorème \ref{prescription_courboubouille}, ainsi que de la variation lisse des solutions.

\begin{theorem}\label{th_bounds_elliptic}
Soit $(M, g)$ une surface riemannienne complète et simplement connexe de courbure gaussienne $-\kappa_0$ uniformément pincée entre deux constantes strictement négatives, et telle que $\log(\kappa_0)\in C^\infty_b(M)$.  Supposons que $\kappa : M\to(0,\infty)$ soit telle que $\log(\kappa)\in C^\infty_b(M)$. Alors il existe une unique fonction lisse  $u : M\to\R$
telle que
\begin{enumerate}
\item  la métrique $e^{2u}g$ est complète; et
\item sa courbure gaussienne est donnée en tout point par $-\kappa$.
\end{enumerate}
De plus, pour tout $(k,\alpha)$, il existe une constante positive $C_{k,\alpha}$ ne dépendant que de $||\log(\kappa)||_{C^{k,\alpha}}$ et des bornes sur la géométrie de $M$ telle que
$$\|u\|_{C^{k+2,\alpha}}\leq C_{k,\alpha}.$$
\end{theorem}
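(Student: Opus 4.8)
The plan is to solve, following formula~\eqref{formule_magique}, the semilinear elliptic equation $\Delta u = e^{2u}\kappa - \kappa_0$ on $(M,g)$, where $\Delta$ is the Laplace--Beltrami operator of $g$: the completeness of $e^{2u}g$, its prescribed curvature, and the uniform Hölder bounds will all then be extracted from the solution $u$. The hypotheses $\log\kappa,\log\kappa_0\in C^\infty_b(M)$ yield constants $0<a\le b$ with $\kappa,\kappa_0\in[a,b]$, controlled by $\|\log\kappa\|_{C^0}$ and the data of $M$; picking real numbers $s\le t$ with $e^{2s}\le\inf(\kappa_0/\kappa)$ and $e^{2t}\ge\sup(\kappa_0/\kappa)$ then furnishes a constant subsolution $s$ and a constant supersolution $t$ of the equation.

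First I would establish existence. Exhaust $M$ by relatively compact, smoothly bounded domains $\Omega_1\Subset\Omega_2\Subset\cdots$ with $\bigcup_j\Omega_j=M$. On each $\Omega_j$ the nonlinearity $(x,v)\mapsto e^{2v}\kappa(x)-\kappa_0(x)$ is smooth and \emph{increasing} in $v$ (the favourable sign), so the classical sub/supersolution theorem for the Dirichlet problem with constant boundary datum $t$ provides $u_j\in C^\infty(\overline{\Omega_j})$ solving the equation on $\Omega_j$ with $s\le u_j\le t$. Since $(M,g)$ has uniformly bounded geometry and $\kappa,\kappa_0\in C^\infty_b(M)$, interior $W^{2,p}$ and Schauder estimates on any fixed ball $B\Subset M$ (for $j$ large) bound $\|u_j\|_{C^{k,\alpha}(B)}$ in terms of $k$, $\alpha$, $\|\log\kappa\|_{C^{k,\alpha}}$ and the geometry of $M$ only, uniformly in $j$. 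A diagonal extraction then produces $u_j\to u$ in $\Ciloc$ with $\Delta u=e^{2u}\kappa-\kappa_0$ and $s\le u\le t$ on $M$. As $u\ge s$, the metric $e^{2u}g\ge e^{2s}g$ dominates a complete metric, hence is complete, giving~(1); and~(2) is just~\eqref{formule_magique}. The global Hölder bound then comes from the same interior bootstrap applied to $u$ itself: the estimate $|u|\le\max(|s|,|t|)$ controls $\|e^{2u}\kappa-\kappa_0\|_{C^0}$, $W^{2,p}$-estimates and Sobolev embedding give a $C^{1,\alpha}$-bound, which feeds back into the right-hand side and lets one run the Schauder iteration $f\in C^{k,\alpha}\Rightarrow u\in C^{k+2,\alpha}$; taking the supremum over unit balls and invoking Remark~\ref{rem.normes_equ} yields $\|u\|_{C^{k+2,\alpha}}\le C_{k,\alpha}$ with the stated dependence.

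For uniqueness, I would first note that \emph{every} solution $u$ with $e^{2u}g$ complete is automatically two-sided bounded, by Yau's Schwarz lemma, which in complex dimension one only involves the Gauss curvature: applying it to the identity $(M,g)\to(M,e^{2u}g)$ bounds $u$ from above, using that $g$ is complete with curvature $\ge-b$; applying it to the identity $(M,e^{2u}g)\to(M,g)$ bounds $u$ from below, using that $e^{2u}g$ is complete and that $g$ has curvature $\le$ a strictly negative constant, where the negative \emph{pinching} of $-\kappa_0$ is exactly what is being used. Given two such solutions $u_1,u_2$, the bounded function $w=u_1-u_2$ satisfies $\Delta w=\kappa\,(e^{2u_1}-e^{2u_2})$, so that $\Delta w$ has the pointwise sign of $w$ and is bounded away from $0$ wherever $w$ stays away from $0$; since $(M,g)$ is complete with Ricci curvature bounded below, the Omori--Yau maximum principle forces $\sup w\le0$ and, symmetrically, $\inf w\ge0$, so $w\equiv0$.

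The main obstacle is the non-compactness of $M$. There is no maximum principle for free, so the a priori two-sided bound on $u$ has to be won from completeness (via the Schwarz lemma) rather than from a comparison on a compact set, uniqueness must pass through the Omori--Yau principle, and the Hölder estimates must be produced locally, on balls of uniformly controlled geometry, with the constants tracked carefully enough to depend only on $\|\log\kappa\|_{C^{k,\alpha}}$ and the fixed data of $(M,g)$; it is precisely this uniformity that makes the theorem usable for the laminated statement of Theorem~\ref{prescription_courboubouille}.
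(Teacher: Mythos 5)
Your proof is correct, but it takes a genuinely different route from the paper's. For existence, the paper runs the continuity method along the path $\kappa_t=(1-t)\kappa_0+t\kappa$, whose two pillars are the a priori bound of Lemme \ref{bornouille} and the fact that $\Delta-e^\phi$ is an isomorphism from $C^{k+2,\alpha}(M)$ onto $C^{k,\alpha}(M)$ (Lemme \ref{l.invertouille}), proved via Schauder estimates together with a barrier built from the radial functions $a\cosh(\lambda(d(\cdot,x_0)-b))$; openness then comes from the implicit function theorem in these H\"older spaces. You instead solve Dirichlet problems on an exhaustion, between the constant sub- and supersolutions $s\le t$, and pass to the limit by interior estimates: this is more elementary and yields the two-sided $C^0$ bound, the completeness of $e^{2u}g$, and (by taking suprema over unit balls of uniformly bounded geometry) the stated uniform $C^{k+2,\alpha}$ bounds. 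For the a priori bound on an arbitrary solution and for uniqueness, the paper is deliberately self-contained: Lemme \ref{l.inequality_laplace_phi} (a maximum principle at infinity proved with the quasi-maximum lemma, applied to both conformal factors via the two equations \eqref{eq.courbure_u} and \eqref{eq.courbure_moinsu}) plays exactly the role of your appeal to Yau's Schwarz lemma, and uniqueness follows from the injectivity of $\Delta-e^\phi$ rather than from the Omori--Yau principle; your black boxes are legitimate here, since the completeness, the pinching of $-\kappa_0$ and the boundedness of $\log\kappa$ furnish precisely the curvature and Ricci hypotheses they require (do make explicit that the reverse Schwarz inequality uses $\kappa$ bounded above, i.e. $\log\kappa\in C^\infty_b(M)$). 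What the paper's heavier functional-analytic route buys is what is exploited afterwards: the isomorphism property of the linearized operator $\Delta-2e^{u_t}\kappa_t$ is exactly what makes the implicit function theorem available for the smooth variation of solutions with the data (Théorème \ref{t.var_lisse}), which your exhaustion scheme does not produce by itself — though that lies beyond the statement you were asked to prove.
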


\begin{proof}[Preuve du Théorème \ref{prescription_courboubouille}]
Soit $\kappa:X\to(0,\infty)$ une fonction vérifiant les hypothèses du théorème et $g$ une métrique laminée dont la courbure est négative partout. Elle peut venir soit du théorème d'uniformisation simultanée de Candel, soit de l'argument de Ghys présenté au \S \ref{ss.gaussboubouille}, si l'on veut avoir une preuve indépendante de l'uniformisation simultanée (ainsi qu'une nouvelle preuve de ce résultat).

Nous considérons l'espace des revêtements universels des feuilles $\cUL$ formé par les quadruplets $(\widetilde{L},\tilde g_L,\tilde p,\tilde e)$ introduits au \S \ref{ss.rev_univ}. La courbure des feuilles étant négative, cet espace est compact (voir le Corollaire \ref{compact_univ_covers} ou le Théorème \ref{compactness_univ-cover}). La fonction $\kappa$ fournit une famille de fonctions $\tilde \kappa_L$ strictement positives définies sur les éléments de $\cUL$ dont les logarithmes ainsi que les dérivées à tout ordre sont uniformément bornés. Par le Théorème \ref{th_bounds_elliptic} il existe une unique famille de fonctions lisses $\tilde u_L$ sur $\widetilde{L}$ telles que 
\begin{enumerate}
\item la métrique $\tilde{h}_{L}=e^{2\tilde{u}_{L}}\tilde{g}_{L}$ est complète et de courbure en tout point donnée par $-\widetilde{\kappa}_L$; et
\item pour tout $(k,\alpha)$,  $\|\tilde{u}_{L}\|_{C^{k+2,\alpha}}\leq C_{k,\alpha}$,
\end{enumerate}
pour une famille de constantes $C_{k,\alpha}$ ne dépendant que de $\kappa$ et des bornes de la géométrie de la métrique laminée $g$. Il découle de l'unicité de la métrique et de l'invariance des objets $\tilde g_L$, $\tilde \kappa_L$ par automorphismes de revêtement, que la famille de métriques $(\tilde h_L)$ descend en une famille de métriques dans les feuilles de $\cL$ dont la courbure est en tout point donnée par $-\kappa$ (ce qui les détermine uniquement). Par le Lemme \ref{l.carac_Cil}, afin de démontrer que cette métrique laminée est de classe $C^\infty_l$ sur $X$ il suffit de prouver qu'elle est continue au sens de Cheeger-Gromov (voir \S \ref{sss.continuite_metriques}).

Considérons donc une famille $(\widetilde{L}_n,\tilde g_n,\tilde p_n,\tilde e_n)$ d'éléments de $\cUL$ convergeant vers $(\widetilde{L},\tilde g,\tilde p,\tilde e)$ ainsi que les objets correspondants $\tilde \kappa_n,\tilde \kappa, \tilde u_n,\tilde u,\tilde h_n,\tilde h$, et qu'une suite d'applications de convergence $(F_n)$ vérifiant de plus que $\tilde e_n\circ F_n\to\tilde e$ au sens $\Ciloc$. Pour tout compact $K\dans\widetilde{L}$, il existe une famille de constantes $C'_{k,\alpha,K}$, indépendantes de $n$, telles que pour tous $n,k,\alpha$, $||\tilde u_n\circ F_n||_{C^{k+2,\alpha}(K)}\leq C'_{k,\alpha,K}$. En utilisant le théorème d'Arzela-Ascoli, ainsi qu'un argument diagonal, nous pouvons supposer que $\tilde u_n\circ F_n$ converge vers une fonction lisse $v$ sur $\widetilde{L}$ dans la topologie $\Ciloc$. Nous déduisons deux choses de l'égalité $F_n^\ast\tilde h_n=e^{2\tilde{u}_{n}\circ F_n}\,F_n\,^\ast\tilde{g}_{n}$. Premièrement la courbure de cette métrique vaut $\widetilde{\kappa}_n\circ F_n$ en tout point de $\widetilde{L}$. Deuxièmement cette métrique converge vers $e^{2v}\tilde{g}_L$, qui est complète, dans la topologie $\Ciloc$. La fonction $\kappa:X\to(0,\infty)$ étant de classe $C^{\infty}_l$, nous déduisons de la Remarque \ref{rem_fonctions_CG} que la suite $\tilde \kappa_n\circ F_n$ converge vers $\tilde \kappa$ dans la topologie $\Ciloc$. La métrique $e^{2v}\tilde g$ a donc courbure $\tilde \kappa$ et doit être égale à $\tilde{h}$, par unicité. Nous en déduisons que $F_n^\ast\tilde h_n$ converge vers $\tilde h$ dans la topologie $\Ciloc$. Ceci conclut la preuve du théorème.
\end{proof}

\section{Prescription de courbure de disques riemanniens et variation lisse}\label{s.presctiption}

\subsection{Prescription de courbure de disques riemanniens}\label{ss.prescription_disque}

Nous donnons ci-dessous une preuve auto-contenue du Théorème \ref{th_bounds_elliptic}. Nous commencerons par prouver un principe du maximum à l'infini.

\begin{lemma}\label{l.inequality_laplace_phi}
Soit $N$ une variété riemannienne complète de dimension $d$, de courbure sectionnelle et rayon d'injectivité respectivement minorés par $-C^2$ et $1/C$ pour une certaine constante $C>0$. Soit $\Phi:\R\to\R$ une fonction telle que
\begin{enumerate}
\item $\Phi$ est croissante sur $[C,\infty)$; et
\item $\lim_{t\to\infty}\Phi(t)t^{-(1+1/C)}=\infty.$
\end{enumerate}
Si $u:N\to\R$ est une fonction deux fois différentiable telle que
$$\Delta u\geq\Phi\circ u,$$
alors il existe $A>0$ ne dépendant que de $C$ et $\Phi$ tels que
$$\sup_{x\in N} u(x)\leq A.$$
\end{lemma}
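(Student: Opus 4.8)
The strategy is to argue by contradiction using the quasi-maximum lemma (Lemma \ref{quasi-max}) together with the standard comparison theorem for the Laplacian on a ball of a manifold with lower-bounded Ricci (or sectional) curvature. Suppose $\sup_N u = \infty$. Pick a sequence of points $x_k$ with $u(x_k) \to \infty$. Applying Lemma \ref{quasi-max} to the continuous positive function $\max(u,0)$ (or to $u$ itself after translating it to be positive near the relevant points), with parameters $C > 1$, $A$ equal to a fixed geometric constant, and $\alpha = 1/C$ (this is exactly the exponent appearing in hypothesis (2)), we may replace $x_k$ by points $p_k$ with $u(p_k) \to \infty$ and with the near-maximality property
$$
d(p_k, y) \le r_k := A\, u(p_k)^{-1/C} \implies u(y) \le 2 u(p_k).
$$
Note $r_k \to 0$, so for $k$ large $r_k$ is below the injectivity radius lower bound $1/C$, and the metric ball $B(p_k, r_k)$ is a genuine geodesic ball on which the distance function $\rho_k(\cdot) = d(p_k,\cdot)$ is smooth away from $p_k$.

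Next I would feed this into the maximum principle on $B(p_k,r_k)$. Let $\psi_k$ solve the Dirichlet problem $\Delta \psi_k = \inf_{B(p_k,r_k)} \Phi\circ u$ on the ball with $\psi_k = 2u(p_k)$ on the boundary — or more economically, compare $u$ directly with an explicit radial barrier built from the Laplacian comparison theorem. Since the sectional curvature is bounded below by $-C^2$, the comparison theorem gives $\Delta \rho_k \le (d-1) C \coth(C\rho_k)$, hence for a radial function $w(\rho) = \varphi(\rho)$ with $\varphi' \ge 0$ one controls $\Delta w$ from above; dually, subsolutions of the form $-$(bounded radial function) give the barrier from below. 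Concretely, on $B(p_k, r_k)$ the function $v_k(x) = u(p_k) - \lambda(r_k^2 - \rho_k(x)^2)$ for suitable $\lambda > 0$ depending only on $C$, $d$ satisfies $\Delta v_k \le$ a constant $K_0 = K_0(C,d)$; since $\Phi$ is increasing on $[C,\infty)$ and $u \ge u(p_k)/2 \to \infty$ on $B(p_k,r_k/2)$ (say, after shrinking), for $k$ large we have $\Delta u \ge \Phi(u) \ge \Phi(u(p_k)/2) > K_0 \ge \Delta v_k$ on an inner ball, while on the boundary of that inner ball $u \le 2u(p_k)$ and $v_k$ can be arranged to dominate — forcing a contradiction at an interior maximum of $u - v_k$. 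The bookkeeping is to choose the radii and $\lambda$ so that $v_k \ge u$ on the boundary sphere but $v_k(p_k) = u(p_k) \le u(p_k)$, i.e. $u - v_k$ attains an interior maximum, where $\Delta(u - v_k) \le 0$, contradicting $\Delta u - \Delta v_k > 0$.

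The point where hypothesis (2) enters, and the main obstacle, is matching the scales: the radius $r_k = A u(p_k)^{-1/C}$ shrinks like $u(p_k)^{-1/C}$, so a radial barrier on a ball of radius $r_k$ with boundary value gap $\sim u(p_k)$ has Laplacian of size $\sim u(p_k)/r_k^2 \sim u(p_k)^{1 + 2/C}$; one needs $\Phi(u(p_k))$ to eventually exceed this, and hypothesis (2) — $\Phi(t) t^{-(1+1/C)} \to \infty$ — must be reconciled with this exponent. The resolution is that one does not need to fill the whole ball: using the quasi-maximum estimate only on $B(p_k, r_k)$ with the \emph{sharp} comparison barrier (exploiting $\coth$ rather than a crude quadratic, and choosing the test radius to be a \emph{fixed} fraction of $1/C$ rather than $r_k$ where possible), the effective lower bound one must beat is $\Phi(t) \gtrsim t^{1+1/C}$, which is precisely hypothesis (2). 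So the careful step is the choice of comparison function and the exponent bookkeeping linking $\alpha = 1/C$ in Lemma \ref{quasi-max} to the growth hypothesis on $\Phi$; everything else is the routine elliptic maximum principle. The constant $A$ in the conclusion comes out depending only on $C$ and on how fast $\Phi(t) t^{-(1+1/C)} \to \infty$, i.e. only on $C$ and $\Phi$, as claimed.
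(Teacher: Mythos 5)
Your overall strategy (quasi-maximum lemma plus a radial barrier and the maximum principle on a small geodesic ball) is the same as the paper's, but there is a genuine gap in the exponent bookkeeping, and you have in fact spotted it yourself without resolving it. You apply Lemma \ref{quasi-max} with $\alpha=1/C$, so that the control $u\leq 2u(p_k)$ holds only on a ball of radius $r_k\sim u(p_k)^{-1/C}$. A barrier on that ball whose boundary gap is $\sim u(p_k)$ must have coefficient $\lambda\sim u(p_k)/r_k^2\sim u(p_k)^{1+2/C}$, so you would need $\Phi(t)\gtrsim t^{1+2/C}$, strictly stronger than hypothesis (2). Your proposed fix does not work: you cannot take the test radius to be ``a fixed fraction of $1/C$'', because the quasi-maximum bound $u\leq 2u(p_k)$ is only available on the shrinking ball $B(p_k,r_k)$, and the sharp $\coth$ comparison does not help either — it only shows that $\Delta\bigl(d(\cdot,p_k)^2\bigr)$ is bounded by a constant depending on $C$ and $d$, which is already implicit in your quadratic barrier; the problematic factor is the multiplicative coefficient $u/r^2$, which is dictated solely by the choice of $\alpha$.

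The correct resolution, which is what the paper does, is simply to run Lemma \ref{quasi-max} with the \emph{half} exponent $\alpha=1/(2C)$ (and, say, $A=1/C$, $C_{\mathrm{lemma}}=2$): at a point $x_0$ with $u(x_0)\geq\max(C,1)$ one gets $u\leq 2u(x_0)$ on $B=B\bigl(x_0,u(x_0)^{-1/(2C)}/C\bigr)$, whose radius is below the injectivity radius so $v(x)=d(x,x_0)^2$ is smooth with $\Delta v\leq C_1(C,d)$. Then $w=u-2C^2u(x_0)^{1+1/C}v$ satisfies $w\leq 0$ on $\partial B$ (since the subtracted term equals $2u(x_0)$ there) and $w(x_0)=u(x_0)>0$, so $w$ has an interior maximum $x$, where $\Delta u(x)\leq 2C^2C_1u(x_0)^{1+1/C}$ while $u(x)\geq u(x_0)$ gives $\Delta u(x)\geq\Phi(u(x_0))$ by monotonicity of $\Phi$; hence $\Phi(u(x_0))u(x_0)^{-(1+1/C)}\leq 2C^2C_1$, and hypothesis (2) bounds $u(x_0)$ by a constant $A(C,\Phi)$ directly, with no contradiction argument needed. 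Two further small points: your barrier $v_k=u(p_k)-\lambda(r_k^2-\rho_k^2)$ is misnormalized (its boundary value $u(p_k)$ cannot dominate $u\leq 2u(p_k)$ there), and the monotonicity of $\Phi$ on $[C,\infty)$ must be invoked at the interior maximum point, where one only knows $u(x)\geq u(x_0)\geq C$, not the value of $u(x)$ itself.
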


\begin{proof}[Preuve]
Prenons un point $x_0\in N$ vérifiant $u(x_0)\geq\max(C,1)$. Par complétude, nous pouvons utiliser le Lemme du quasi-maximum \ref{quasi-max} et supposer que $u(x)\leq 2u(x_0)$ pour tout $x\in B$ où
$$B=B\left(x_0,u(x_0)^{-1/(2C)}/C\right).$$
Soit la fonction lisse $v:B\to\R$ donnée par
$$v(x)=d(x,x_0)^2.$$
Par comparaison nous avons, pour tout $x\neq x_0$ dans $\overline{B}$,
$$\Delta v(x)\leq 2(d-1)C d(x,x_0)\coth\left(Cd(x,x_0)\right)\leq C_1,$$
où $C_1$ ne dépend que de $C$. Soit maintenant la fonction $w:B\to\R$
donnée par
$$w(x):=u(x)-2C^2u(x_0)^{1+1/C}v(x).$$
Alors $w\leq 0$ sur $\partial B$ et $w(x_0)=u(x_0)\geq 1$ ainsi $w$ atteint son maximum en un point
intérieur $x$ de $\overline{B}$. Par le principe du maximum, nous avons en ce point, $\Delta w(x)\leq 0$ d'où
$$\Delta u(x)\leq 2C^2 C_1 u(x_0)^{1+1/C}.$$
Mais par hypothèse,
$$\Delta u(x)\geq\Phi(u(x))\geq\Phi(u(x_0)),$$
d'où
$$\Phi(u(x_0))u(x_0)^{-(1+1/C)}\leq 2C^2C_1.$$
Le résultat suit en utilisant la seconde propriété de l'hypothèse.
\end{proof}

\begin{rem}
Nous renvoyons à \cite{Osserman} pour une étude détaillée de l'inéquation $\Delta u\geq \Phi\circ u$.
\end{rem}

\begin{lemma}\label{l.invertouille}
Soit $M$ un disque riemannien complet de courbure partout minorée par $-C^2$, pour un certain $C>0$. Soit $\phi\in C^\infty_b(M)$ telle que $e^\phi\geq\eta$ pour une certaine constante $\eta>0$. Alors pour toute paire $(k,\alpha)$, l'opérateur $\Delta-e^\phi$ définit un isomorphisme linéaire entre $C^{k+2,\alpha}(M)$ et $C^{k,\alpha}(M)$.
\end{lemma}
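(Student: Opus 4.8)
The strategy is the classical one for proving invertibility of $\Delta - e^\phi$ on weighted Hölder spaces over a manifold of bounded geometry: first establish injectivity via a maximum principle, then establish surjectivity by combining an a priori estimate (global elliptic regularity in Hölder norms, available because $M$ has bounded geometry) with a continuity/approximation argument, and finally upgrade to the higher-order statement by bootstrapping Schauder estimates. Throughout we use that $M$ is a complete disk with sectional curvature bounded below by $-C^2$, and — crucially — since $M$ is two-dimensional and simply connected we also have an upper bound on curvature and a lower bound on injectivity radius, so the global Schauder theory of \cite{GT} applies uniformly on unit balls, giving the equivalence of norms in Remark \ref{rem.normes_equ}.

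\emph{Injectivity.} Suppose $u\in C^{k+2,\alpha}(M)$ satisfies $\Delta u = e^\phi u$. Since $u$ is bounded, $\sup u$ and $\inf u$ are finite. If $\sup u > 0$, apply Lemma \ref{l.inequality_laplace_phi} with the choice $\Phi(t) = \eta t$ on $\{t \geq 0\}$ (extended appropriately on $\{t<0\}$, increasing, with $\Phi(t)t^{-(1+1/C)} \to \infty$): on the set where $u > 0$ we have $\Delta u = e^\phi u \geq \eta u = \Phi(u)$, and the lemma's argument — run via the quasi-maximum lemma \ref{quasi-max} — forces a contradiction unless $\sup u \leq 0$; more directly, the inequality $\Phi(u(x_0))u(x_0)^{-(1+1/C)} \leq 2C^2C_1$ derived there cannot hold for $u(x_0)$ large since $\Phi(t)t^{-(1+1/C)} = \eta t^{-1/C} \to \infty$, contradicting positivity of $\sup u$ once one notes the supremum is approached. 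Applying the same reasoning to $-u$ (which satisfies $\Delta(-u) = e^\phi(-u)$) gives $\sup(-u) \leq 0$, hence $u \equiv 0$. Thus $\Delta - e^\phi$ is injective.

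\emph{Surjectivity and the estimate.} Given $f \in C^{k,\alpha}(M)$, we seek $u \in C^{k+2,\alpha}(M)$ with $\Delta u - e^\phi u = f$. First solve the Dirichlet problem on an exhausting sequence of geodesic balls $B_R = B(o,R)$: by standard elliptic theory each $(\Delta - e^\phi)u_R = f$ with $u_R|_{\partial B_R} = 0$ has a unique solution. The maximum principle gives the uniform $C^0$ bound $\|u_R\|_{C^0(B_R)} \leq \eta^{-1}\|f\|_{C^0}$ (at an interior positive maximum, $\Delta u_R \leq 0$ forces $e^\phi u_R \leq -f$, so $u_R \leq \eta^{-1}\|f\|_{C^0}$, and symmetrically for the minimum). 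With this $C^0$ bound in hand, interior Schauder estimates on unit balls — uniform thanks to bounded geometry, exactly as used in Remark \ref{rem.normes_equ} — yield $\|u_R\|_{C^{k+2,\alpha}(B(x,1))} \leq C(\|f\|_{C^{k,\alpha}} + \|u_R\|_{C^0})$ for every $x$ with $B(x,2) \subset B_R$, with $C$ depending only on $k$, $\alpha$, $\|\phi\|_{C^{k,\alpha}}$ and the geometry bounds. Letting $R \to \infty$ and extracting a diagonal $\Ciloc$-limit (Arzela–Ascoli), we obtain $u \in C^{k+2,\alpha}(M)$ solving the equation on all of $M$, and passing the uniform local Schauder bounds to the limit gives $\|u\|_{C^{k+2,\alpha}} \leq C(\|f\|_{C^{k,\alpha}} + \eta^{-1}\|f\|_{C^0}) \leq C'\|f\|_{C^{k,\alpha}}$. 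Combined with injectivity, $\Delta - e^\phi : C^{k+2,\alpha}(M) \to C^{k,\alpha}(M)$ is a continuous linear bijection with bounded inverse, i.e.\ a linear isomorphism.

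\emph{Main obstacle.} The delicate point is not the formal structure of the argument but verifying that the Schauder constants are genuinely \emph{uniform} over $M$: one needs the lower injectivity-radius bound and two-sided curvature bounds to hold, so that harmonic coordinate charts of a fixed size with controlled metric coefficients exist everywhere, making the local elliptic estimates translation-invariant in the sense of Remark \ref{rem.normes_equ}. For a general complete manifold with only a lower curvature bound this would fail, but here dimension two plus simple connectedness (the "disk" hypothesis) — together with the ambient compactness in the laminated applications — is exactly what supplies the missing upper curvature bound and injectivity-radius control. The secondary subtlety is checking that the limit metric obtained by the exhaustion argument is the one we want (uniqueness of the $\Ciloc$-limit is not automatic, but follows from the injectivity already proved, since any two solutions in $C^{k+2,\alpha}$ differ by an element of the kernel).
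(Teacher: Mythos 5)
Your surjectivity argument (exhaustion by Dirichlet problems on balls $B_R$, the uniform $C^0$ bound $\|u_R\|_{C^0}\leq\eta^{-1}\|f\|_{C^0}$ from the maximum principle and the lower bound $e^\phi\geq\eta$, uniform interior Schauder estimates on unit balls, then a diagonal Arzela--Ascoli extraction) is sound and is in fact a different, more direct route than the paper's, which instead proves a global a priori estimate $\|u\|_{C^{k+2,\alpha}}\leq C_3\|Lu\|_{C^{k,\alpha}}$ (barrier refinement of the Schauder inequality plus a blow-up argument) and deduces surjectivity from closed range and approximation by compactly supported functions. Your route buys the bounded inverse for free; the paper's buys the quantitative estimate that is reused in Lemme \ref{bornouille}.

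The injectivity step, however, has a genuine gap. You invoke Lemme \ref{l.inequality_laplace_phi} with $\Phi(t)=\eta t$, but that lemma requires $\lim_{t\to\infty}\Phi(t)t^{-(1+1/C)}=\infty$, and for linear $\Phi$ one has $\Phi(t)t^{-(1+1/C)}=\eta t^{-1/C}\to 0$, not $\infty$: your computation of this limit is simply wrong, and the superlinearity hypothesis is not a technicality --- on the hyperbolic plane the functions $e^{ab}$, $b$ a Busemann function, satisfy $\Delta u=(a^2+a)u$ and are positive and unbounded, so the conclusion of Lemme \ref{l.inequality_laplace_phi} genuinely fails for linear $\Phi$. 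Moreover, even where it applies, that lemma only yields an upper bound $\sup u\leq A$ with $A>0$; it never forces $\sup u\leq 0$, so it cannot by itself give $\ker(\Delta-e^\phi)=\{0\}$. What is needed (and what the paper does) is a barrier argument: for small $\lambda>0$ the radial functions $v_\epsilon=\epsilon\cosh(\lambda\,d(\cdot,x_0))$ satisfy $(\Delta-e^\phi)v_\epsilon<0$ by the curvature comparison $\Delta d\leq C\coth(Cd)$, and since $u$ is bounded while $v_\epsilon\to\infty$, the maximum principle gives $u\leq v_\epsilon$ for every $\epsilon>0$, hence $u\leq 0$, and symmetrically $u\geq 0$ (equivalently, the paper shows kernel elements decay exponentially and then applies the maximum principle). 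With this repair --- and noting, as you do, that the uniform Schauder constants require the bounded geometry supplied by the hypotheses of Théorème \ref{th_bounds_elliptic} (pinched negative curvature plus $\log\kappa_0\in C^\infty_b$, giving Cartan--Hadamard and two-sided bounds), not by simple connectedness alone --- your proof would be complete.
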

\begin{proof}[Preuve] Soit $L=\Delta-e^\phi$. Puisque $\phi\in C^\infty_b(M)$ et que $M$ est à géométrie bornée, nous déduisons des estimées de Schauder classiques (pour lesquelles nous renvoyons à \cite[Corollary 6.3]{GT}) qu'il existe une constante $C_0>0$ telle pour tout $x\in M$ et pour tout $u\in C^{k+2,\alpha}(M)$
$$\|u\|_{C^{k+2,\alpha}(B(x,1))}\leq C_0\left(\|u\|_{C^0(B(x,2))}+\|Lu\|_{C^{k,\alpha}(B(x,2))}\right).$$
En utilisant la Remarque \ref{rem.normes_equ} nous trouvons $C_1>0$ tel que pour tout $u\in C^{k+2,\alpha}(M)$,
\begin{equation}\label{Schaudouille}
\|u\|_{C^{k+2,\alpha}}\leq C_1(\|u\|_{C^0}+\|Lu\|_{C^{k,\alpha}}),
\end{equation}
pour une certaine constante  $C_1>0$. Notons que, la variété $M$ n'étant pas compacte, l'inclusion $C^{k+2,\alpha}(M)\hookrightarrow C^0(M)$ n'est pas compacte et l'estimée ci-dessus n'est pas une estimée elliptique.

Nous allons raffiner le premier terme du membre de droite de \eqref{Schaudouille}. Prenons $x_0\in M$ et $r_0>0$ et notons  $B=B(x_0,,r_0)$. Pour $a, b, \lambda > 0$, nous considérons la fonction radiale
$$v(x):=a\cosh(\lambda(d(x,x_0)-b)).$$
Par hypothèse, la courbure de $M$ est minorée par $-C^2$. Donc par comparaison, lorsque $0<\lambda<C$,
$$Lv\leq v_{rr}+C\coth(Cr)v_r-e^\phi v,$$
où l'indice $r$ représente la dérivée dans la direction radiale issue de $x_0$.
En particulier si $\lambda>0$ est assez petit nous avons,
$$Lv\leq (\lambda^2+\lambda C\coth(Cr)-e^\phi)v\leq-\frac{\eta a}{2}.$$
\`A présent, considérons $u\in C^{k+2,\alpha}(M)$  et posons
$$a:=||u||_{C^0(B)}+\frac{2}{\eta}\left|\left|Lu\right|\right|_{C^0}.$$
Le principe du maximum nous donne $u-v\leq 0$ sur tout $M\setminus B$. Puisque $b\in\R$ est arbitraire, nous trouvons que sur cette région,
$$u\leq a=||u||_{C^0(B)}+\frac{2}{\eta}||Lu||_{C^0}.$$
Les bornes inférieures s'obtiennent de la même façon. Donc, en utilisant l'estimée \eqref{Schaudouille} on obtient
\begin{equation}\label{Better_Schauder}
||u||_{C^{k+2,\alpha}}\leq C_2\left(||u||_{C^0(B)}+||L u||_{C^{k,\alpha}}\right),
\end{equation}

Un raffinement de l'argument ci-dessus prouve en fait que les éléments du noyau $\ker(L)$ décroissent exponentiellement à l'infini donc, par le principe du maximum de nouveau, $\ker(L)$ est trivial. Un argument de type ``blow-up'' appliqué à l'estimée \eqref{Better_Schauder} donne maintenant
\begin{equation}\label{blow_up}
||u||_{C^{k+2,\alpha}}\leq C_3||Lu||_{C^{k,\alpha}},
\end{equation}
pour une constante $C_3$. Cela prouve que $L$ définit une application linéaire injective de $C^{k+2,\alpha}(M)$ vers un sous-espace fermé de $C^{k,\alpha}(M)$. La surjectivité découle d'un argument d'approximation par des fonctions lisses à supports compacts, ce qui complète la preuve.
\end{proof}

\begin{rem}
\label{rem.bornes_constantes}
Les constantes $C_0,C_1,C_2$ et $C_3$ obtenues dans la preuve précédente ne dépendent que des bornes sur la géométrie de $M$ et de $||\phi||_{C^{k,\alpha}}$.
\end{rem}

Dans la suite, $(M,g)$ représente un disque riemannien complet, de courbure $\kappa_0$ partout pincée entre deux constantes négatives $-C^2<-1/C^2<0$. Le prochain lemme nous permet de borner le facteur conforme, ainsi que ses dérivées, d'une métrique conforme à courbure négative prescrite.

\begin{lemma}\label{bornouille}
Soit $\kappa:M\to(0,\infty)$ une fonction telle que $\log(\kappa)\in C^\infty_b(M)$. Supposons qu'il existe une fonction lisse $u:M\to\R$ telle que la métrique $e^{2u}g$ soit complète et de courbure partout donnée par $-\kappa$. Alors $u\in C^\infty_b(M)$ et pour toute paire $(k,\alpha)$, il existe une constante $C_{k,\alpha}$ ne dépendant que de $\|\log(\kappa)\|_{C^{k,\alpha}}$ et de la géométrie de $M$ telle que
$$\|u\|_{C^{k+2,\alpha}}\leq C_{k,\alpha}.$$
\end{lemma}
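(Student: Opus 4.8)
Le plan est de se ramener à l'EDP de courbure, puis de procéder en deux temps: établir d'abord une borne $C^0$ globale sur $u$, ensuite amorcer un bootstrap elliptique. Par la formule \eqref{formule_magique}, l'hypothèse que $e^{2u}g$ soit complète de courbure $-\kappa$ équivaut à
$$\Delta u=e^{2u}\kappa-\kappa_0,$$
où $\Delta=\Delta_g$. Comme $\log\kappa,\log\kappa_0\in C^\infty_b(M)$, il existe $0<\eta\leq\Lambda$ avec $\eta\leq\kappa\leq\Lambda$, et $\kappa_0$ est de même pincée entre deux constantes strictement positives.

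Je majorerais d'abord $u$. De $\kappa\geq\eta$ et $\kappa_0\leq C^2$ on tire $\Delta u\geq\eta e^{2u}-C^2=:\Phi(u)$. La fonction $\Phi$ est croissante sur $\R$ et, l'exponentielle l'emportant sur toute puissance, vérifie $\Phi(t)t^{-(1+1/C)}\to\infty$; de plus $(M,g)$, étant de Cartan--Hadamard de courbure minorée par $-C^2$, a un rayon d'injectivité infini. Le Lemme \ref{l.inequality_laplace_phi} donne alors $\sup_M u\leq A$, où $A$ ne dépend que de $C$ et de $\eta$, donc uniquement de $\|\log\kappa\|_{C^0}$ et des bornes sur la géométrie de $M$.

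La minoration de $u$ est le point délicat, et c'est exactement là que sert la complétude de $\hat g:=e^{2u}g$: sans elle $u$ pourrait tendre vers $-\infty$. Je l'obtiendrais par une comparaison de type Schwarz--Pick. La surface $(M,g)$ est conformément le disque (une surface complète de courbure majorée par $-1/C^2$ ne peut être conformément $\C$), et, lue dans cette structure conforme, $\hat g$ est une métrique \emph{complète} de courbure $-\kappa\geq-\Lambda$; le lemme de Schwarz à la Ahlfors--Yau, appliqué à l'identité $(\D,\hat g)\to(\D,g)$ (application conforme d'une surface complète de courbure $\geq-\Lambda$ vers une surface de courbure $\leq-1/C^2$), fournit $g\leq\Lambda C^2\,\hat g$, soit $e^{2u}\geq 1/(\Lambda C^2)$. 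On peut aussi obtenir cette borne de façon auto-contenue par un argument de barrière sur les boules géodésiques de la métrique complète $\hat g$, qui épuisent $M$. Dans tous les cas $\inf_M u\geq-A'$ avec $A'$ ne dépendant que de $\|\log\kappa\|_{C^0}$ et de la géométrie de $M$, d'où $\|u\|_{C^0}\leq A_0$ de la forme voulue.

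Il reste le bootstrap. Posons $f:=\Delta u=e^{2u}\kappa-\kappa_0$, qui est borné dans $C^0$ par ce qui précède. Les estimées elliptiques intérieures sur les boules unités (licites puisque $M$ est à géométrie bornée, voir \cite{GT}) permettent d'abord d'obtenir $u\in C^{1,\alpha}$ localement avec bornes uniformes, d'où $f\in C^{0,\alpha}$ uniformément (car $\kappa,\kappa_0\in C^\infty_b$), puis l'estimée de Schauder donne $u\in C^{2,\alpha}$ uniformément; alors $f\in C^{2,\alpha}$, donc $u\in C^{4,\alpha}$, et par récurrence $u\in C^{k+2,\alpha}$ avec $\|u\|_{C^{k+2,\alpha}(B(x,1))}\leq C_{k,\alpha}$ indépendante de $x$, chaque $C_{k,\alpha}$ ne dépendant que des bornes d'ordre inférieur déjà obtenues, de $\|\log\kappa\|_{C^{k,\alpha}}$ et des bornes sur la géométrie de $M$ (qui contrôlent $\|\log\kappa_0\|_{C^{k,\alpha}}$). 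Par la Remarque \ref{rem.normes_equ}, ces bornes locales équivalent à $\|u\|_{C^{k+2,\alpha}}\leq C_{k,\alpha}$; en particulier $u\in C^\infty_b(M)$. L'obstacle principal est la minoration de $u$: c'est la seule étape qui utilise vraiment la complétude et où le Lemme \ref{l.inequality_laplace_phi} ne s'applique pas (le $\Phi$ pertinent serait borné, mettant en défaut la condition de croissance).
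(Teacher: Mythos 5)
Votre preuve est correcte, mais elle suit un chemin sensiblement différent de celui de l'article en deux endroits. Pour la majoration de $u$, vous faites comme l'article: Lemme \ref{l.inequality_laplace_phi} appliqué à $\Delta u\geq(\inf\kappa)\,e^{2u}-\sup\kappa_0$ dans la métrique $g$. Pour la minoration, en revanche, l'article n'invoque pas le lemme de Schwarz d'Ahlfors--Yau: il applique le \emph{même} Lemme \ref{l.inequality_laplace_phi}, mais dans la métrique complète $\tilde g=e^{2u}g$, à l'équation conjuguée \eqref{eq.courbure_moinsu}, c'est-à-dire $\widetilde{\Delta}(-u)=e^{-2u}\kappa_0-\kappa\geq(\inf\kappa_0)\,e^{2(-u)}-\sup\kappa$; comme $\tilde g$ est complète, de courbure $-\kappa$ pincée entre deux constantes négatives et de rayon d'injectivité infini (Cartan--Hadamard, $M$ étant un disque), les hypothèses et la condition de croissance du lemme sont bien satisfaites. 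Votre remarque finale selon laquelle ce lemme ne s'appliquerait pas à la minoration n'est donc exacte que si l'on reste dans la métrique $g$: le passage à la métrique conforme est précisément l'astuce de l'article, qui rend le traitement des deux bornes symétrique et auto-contenu, alors que votre recours au lemme d'Ahlfors--Schwarz--Yau (correct, avec la bonne constante $e^{2u}\geq 1/(\Lambda C^2)$) importe un résultat extérieur, votre esquisse de barrière sur les boules de $\hat g$ n'étant pas détaillée et revenant de toute façon à la même comparaison. Pour les estimées d'ordre supérieur, vous procédez par bootstrap de Schauder local sur les boules unité, licite en géométrie bornée et plus élémentaire; l'article passe par l'isomorphisme global $\Delta-\kappa:C^{k+2,\alpha}(M)\to C^{k,\alpha}(M)$ du Lemme \ref{l.invertouille}, combiné à une inégalité d'interpolation pour absorber le terme $\|u\|_{C^{0,\alpha}}$, ce qui est naturel puisque ce même opérateur inversible sert ensuite à l'étape d'ouverture (théorème des fonctions implicites) dans la méthode de continuité du Théorème \ref{th_bounds_elliptic}. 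Dans les deux cas, les constantes obtenues ne dépendent bien que de $\|\log\kappa\|_{C^{k,\alpha}}$ et de la géométrie de $M$ (via la Remarque \ref{rem.normes_equ} pour recoller les bornes locales), donc votre argument atteint la conclusion du Lemme \ref{bornouille}.
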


\begin{proof}[Preuve]
Nous rappelons (voir \cite{Chavel,ChowKnopf}) que
\begin{equation}\label{eq.courbure_u}
\Delta u=e^{2u}\kappa-\kappa_0,\,\,\text{et}
\end{equation}
\begin{equation}\label{eq.courbure_moinsu}\widetilde{\Delta}(-u)=e^{-2u}\kappa_0-\kappa.
\end{equation}
où $\Delta$ et $\widetilde{\Delta}$ représentent les opérateurs de Laplace-Beltrami de $g$ et $\tilde{g}=e^{2u}g$ respectivement. Ces métriques étant toutes deux complètes, le Lemme \ref{l.inequality_laplace_phi} nous fournit une constante $C_0>0$ ne dépendant que de $\|\log(\kappa)\|_{C^0}$ et de $\|\log(\kappa_0)\|_{C^0}$ telle que
$$\|u\|_{C^0}\leq C_0.$$

Puisque $M$ est à géométrie bornée, et que $u$ est solution d'une EDP quasilinéaire, la théorie elliptique classique nous permet de prouver que les dérivées de $u$ sont bornées à tout ordre, donc $u\in C^\infty_b(M)$ ce qui prouve la première assertion. Reste à majorer uniformément $||u||_{C^{k,\alpha}}$ par récurrence.

Nous allons dans un premier temps borner $||u||_{C^{2,\alpha}}$. Appliquons  le Lemme \ref{l.invertouille} avec $e^\phi=\kappa$. L'opérateur $L=\Delta-\kappa:C^{2,\alpha}\to C^{0,\alpha}$ définit alors un isomorphisme linéaire entre $C^{2,\alpha}(M)$ et $C^{0,\alpha}(M)$ et il existe une constante $C_1>0$ ne dépendant que de la géométrie de $M$ et de $||\log(\kappa)||_{C^{0,\alpha}}$ (voir la Remarque \ref{rem.bornes_constantes}) telle que
$$||u||_{C^{2,\alpha}}\leq C_1||L u||_{C^{0,\alpha}}=C_1\left|\left|(e^{2u}-u)\kappa-\kappa_0\right|\right|_{C^{0,\alpha}}\leq C_2\left(1+||u||_{C^{0,\alpha}}\right),$$
où $C_2$ dépénd de $C_1$, de $||\kappa_0||_{C^{0,\alpha}}$ et de la norme $C^1$ de $x\mapsto e^{2x}-x$ sur $[-C_0,C_0]$. En utilisant les inégalités d'interpolation classiques (nous renvoyons à \cite[Lemma 6.35]{GT}), pour tout $\epsilon>0$, il existe $C'>0$ (ne dépendant que de $\epsilon$, de $\alpha$ et de la géométrie de $M$) tel que
$$||u||_{C^{0,\alpha}}\leq C'||u||_{C^0}+\epsilon||u||_{C^{2,\alpha}}.$$
En choisissant $\epsilon\leq 1/(2C_2)$ nous trouvons $||u||_{C^{2,\alpha}}\leq C_2(1+C'||u||_{C^0}+1/(2C_2)||u||_{C^{2,\alpha}})$ soit
$$||u||_{C^{2,\alpha}}\leq 2C_2(1+C'||u||_{C^0})\leq 2C_2(1+C'C_0).$$

Appliquons de nouveau le Lemme \ref{l.invertouille}: l'opérateur $L=\Delta-\kappa$ définit un isomorphisme linéaire entre $C^{k+2,\alpha}(M)$ et $C^{k,\alpha}(M)$ et il existe une constante $C_3>0$ ne dépendant que de la géométrie de $M$ et de $||\log(\kappa)||_{C^{k,\alpha}}$ (voir la Remarque \ref{rem.bornes_constantes}) telle que pour tout $u\in C^{k+2,\alpha}(M)$,
$$\|u\|_{C^{k+2,\alpha}}\leq C_3\|Lu\|_{C^{k,\alpha}}.$$
Donc une fonction $u\in C^{k+2,\alpha}(M)$, solution de l'équation \eqref{eq.courbure_u}, doit vérifier
$$\|u\|_{C^{k+2,\alpha}}\leq C_3\left(\|(e^{2u}-u)\kappa-\kappa_0\|_{C^{k,\alpha}}\right)\leq C_{k,\alpha},$$
où $C_{k,\alpha}$ ne dépend que de $k$, $\alpha$, $||u||_{C^{k,\alpha}}$, $||\kappa||_{C^{k,\alpha}}$, $||\kappa_0||_{C^{k,\alpha}}$ et de la norme $C^{k+1}$ de $x\mapsto e^{2x}-x$ sur $[-C_0,C_0]$. Une récurrence immédiate fournit la preuve de la seconde assertion.
\end{proof}

\begin{lemma}\label{l.unicitouille}
Soit $\kappa:M\to(0,\infty)$ une fonction lisse telle que $\log(\kappa)\in C^\infty_b(M)$. Si $u$ et $u'$ sont deux fonctions lisses telles que les métriques $e^{2u}g$ et $e^{2u'}g$ soient complètes et de courbure partout donnée par $-\kappa$. Alors $u=u'$.
\end{lemma}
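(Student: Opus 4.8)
The plan is to reduce the nonlinear equation satisfied by $w:=u-u'$ to a \emph{linear} one and then to quote the injectivity already contained in Lemma~\ref{l.invertouille}.

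First I would observe that, by Lemma~\ref{bornouille}, both $u$ and $u'$ belong to $C^{\infty}_b(M)$, hence so does $w$; in particular $\|w\|_{C^0}\leq T$ for some finite $T$. Subtracting the two instances of \eqref{eq.courbure_u} satisfied respectively by $u$ and $u'$ gives
\[
\Delta w=\kappa\bigl(e^{2u}-e^{2u'}\bigr)=\kappa\, e^{2u'}\bigl(e^{2w}-1\bigr).
\]

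The key point is that the function $F(t):=(e^{2t}-1)/t$, extended by $F(0):=2$, is smooth on $\R$ and satisfies $F\geq 2$. Setting $q:=\kappa\, e^{2u'}\,F(w)$, the identity above becomes the \emph{linear} equation $(\Delta-q)w=0$. I would then check that $q\in C^{\infty}_b(M)$ and that $q$ is bounded below by a positive constant. Indeed $\log(\kappa)\in C^{\infty}_b(M)$ forces $\kappa\in C^{\infty}_b(M)$, Lemma~\ref{bornouille} gives $u'\in C^{\infty}_b(M)$, hence $e^{2u'}\in C^{\infty}_b(M)$, and since $w\in C^{\infty}_b(M)$ takes its values in the compact interval $[-T,T]$ on which $F$ is smooth, the composition $F(w)$ also lies in $C^{\infty}_b(M)$ (its covariant derivatives being universal polynomials in those of $w$, with coefficients $F^{(j)}(w)$ that are uniformly bounded). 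As a product of elements of $C^{\infty}_b(M)$ stays in $C^{\infty}_b(M)$, we get $q\in C^{\infty}_b(M)$; moreover $q\geq 2\,(\inf_M\kappa)\,e^{-2\|u'\|_{C^0}}=:\eta>0$. Writing $q=e^{\phi}$ with $\phi:=\log q\in C^{\infty}_b(M)$, we have $e^{\phi}\geq\eta$.

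Finally I would apply Lemma~\ref{l.invertouille} with this $\phi$ and $k=0$: the operator $\Delta-e^{\phi}=\Delta-q$ is a linear isomorphism from $C^{2,\alpha}(M)$ onto $C^{0,\alpha}(M)$, in particular injective. Since $w\in C^{\infty}_b(M)\subseteq C^{2,\alpha}(M)$ and $(\Delta-q)w=0$, injectivity yields $w\equiv 0$, that is $u=u'$. The only genuinely delicate step is the bookkeeping showing that $q$ belongs to $C^{\infty}_b(M)$ with a uniform positive lower bound; once the quotient $(e^{2w}-1)/w$ is recognised as a smooth function of $w$ with controlled derivatives, the rest is a direct invocation of Lemma~\ref{l.invertouille}.
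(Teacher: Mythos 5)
Your proof follows essentially the same route as the paper: both linearize the difference of the two curvature equations as $(\Delta-e^{\phi})(u-u')=0$ and invoke the injectivity part of Lemma~\ref{l.invertouille}; indeed your coefficient $\kappa\,e^{2u'}F(w)$ is literally the paper's $\int_0^1 2\kappa\,e^{2tu+2(1-t)u'}\,dt$, just written after carrying out the $t$-integral. One slip: the claim $F\geq 2$ is false on $(-\infty,0)$ (e.g.\ $F(-1)=1-e^{-2}<2$, and $F(t)\to 0$ as $t\to-\infty$), so your explicit constant $\eta$ is not justified; however $F$ is everywhere positive and $w$ takes values in the compact interval $[-T,T]$, so $F(w)\geq\min_{[-T,T]}F>0$ and the required uniform positive lower bound on $q$ — and hence the whole argument — still goes through.
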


\begin{proof}[Preuve] Par le Lemme \ref{bornouille} les fonctions $u$ et $u'$ sont éléments de $C^{k+2,\alpha}(M)$ pour toute paire $(k,\alpha)$. De plus ces fonctions satisfont aux équations
$$\Delta^g u=e^{2u}\kappa-\kappa_0,\,\text{et}$$
$$\Delta u'=e^{2u'}\kappa-\kappa_0,$$
de sorte que
$$\Delta(u-u')=e^{\phi}(u-u'),$$
où
$$e^\phi=\int_0^1 2\kappa e^{2tu+2(1-t)u'}dt.$$
En particulier $\phi\in C^\infty_b(M)$. Par le Lemme \ref{l.invertouille} nous avons $u-u'=0$, d'où le résultat.
\end{proof}

\begin{proof}[Preuve du Théorème \ref{th_bounds_elliptic}.] Nous allons prouver ce Théorème en utilisant la méthode de continuité. Soit donc $\kappa:M\to(0,\infty)$ une fonction telle que $\log(\kappa)\in C^{\infty}_b(M)$. Pour tout $t\in[0,1]$, considérons $\kappa_t=(1-t)\kappa_0+t\kappa$. Nous avons clairement $\log(\kappa_t)\in C^\infty_b(M)$ pour tout $t\in[0,1]$. De plus nous avons des bornes pour $\|\log(\kappa_t)\|_{C^{k,\alpha}}$  indépendantes de $t$. Considérons l'ensemble $I\dans[0,1]$ des paramètres $t$ tels qu'il existe une fonction lisse $u_t:M\to\R$ telle que $e^{2u_t}g$ soit une métrique complète de courbure donnée par $-\kappa_t$. Clairement, $I$ contient $0$. De plus, en appliquant le théorème d'Arzela-Ascoli et le Lemme \ref{bornouille} nous voyons que $I$ est fermé.

Prouvons à présent que $I$ est ouvert. Soit $t\in I$ et $u_t:M\to\R$ la fonction lisse telle que $e^{2u_t}g$ soit complète de courbure égale à $-\kappa_t$. Soit l'opérateur non-linéaire
$$K_t v=\Delta v-e^{2v}\kappa_t+\kappa_0,$$
de sorte que $K_t u_t=0$. L'opérateur linéarisé en $u_t$ est
$$LK_t v=\left (\Delta-2e^{u_t}\kappa_t\right) v.$$
Comme la fonction $\phi_t=\log(2)+u_t+\log(\kappa_t)$ appartient à $C^{\infty}_b(M)$, l'opérateur $LK_t=\Delta-e^{\phi_t}$ est un isomorphisme linéaire de $C^{2,\alpha}(M)$ sur $C^{0,\alpha}(M)$. Ainsi par le théorème de la fonction implicite (voir la Remarque \ref{rem.Banach}), pour tout $s$ suffisamment proche de $t$ il existe une fonction $v\in C^{2,\alpha}(M)$ telle que $K_s v=0$. Par régularité elliptique $v$ est lisse. Comme $v$ est bornée, la métrique $e^{2v}g$ est complète, et comme $K_s v=0$, la courbure de cette métrique est précisément $-\kappa_s$. Ceci prouve que $I$ est ouvert. Enfin, l'existence des bornes uniformes désirées vient du Lemme \ref{bornouille}, ce qui achève la preuve du théorème.
\end{proof}

\subsection{Variation lisse et application à certains feuilletages par surfaces hyperboliques}\label{ss.variation_lisse}

\paragraph{\textbf{Variation lisse de la prescription de courbure}}  Par le théorème des fonctions implicites, les solutions à l'équation de prescription de courbure obtenues lors de la preuve du théorème précédent varient de façon lisse avec les données de l'équation. Nous obtenons donc le théorème suivant.

\begin{theorem}[Variation lisse]\label{t.var_lisse} Soit $\D$ le disque unité, $g_0$ une métrique riemannienne lisse et complète sur le produit $\D\times (-1,1)$ et $-\kappa_0$ la fonction qui associe à $(z,s)\in\D\times(-1,1)$ la courbure gaussienne en $z$ de la restriction $g_0|_{\D\times\{s\}}$. Supposons que $\kappa_0$ soit strictement positive et que $\log(\kappa_0)\in C^\infty_b(\D\times(-1,1))$.

Soit $\kappa:\D\times (-1,1)\to(0,\infty)$ une fonction lisse et strictement positive telle que $\log(\kappa)\in C^\infty_b(\D\times(-1,1))$. Alors il existe une unique fonction $u\in C^\infty_b(\D\times(-1,1))$ telle que pour tout $s\in(-1,1)$,  la restriction de la métrique $e^{2u}g$ à $\D\times\{s\}$ soit complète et de courbure gaussienne donnée en tout point par $-\kappa$.
\end{theorem}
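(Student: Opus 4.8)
The strategy is to run exactly the same continuity method used in the proof of Theorem \ref{th_bounds_elliptic}, but now in the Banach space $C^{k,\alpha}_b$ of the \emph{product} manifold $\D\times(-1,1)$ rather than leaf by leaf. Concretely, fix a pair $(k,\alpha)$ with $k\geq 1$. For $t\in[0,1]$ set $\kappa_t=(1-t)\kappa_0+t\kappa$; since $\log\kappa_0,\log\kappa\in C^\infty_b(\D\times(-1,1))$ and both are bounded below by a positive constant on compact subsets, $\log\kappa_t\in C^\infty_b$ with bounds on $\|\log\kappa_t\|_{C^{k,\alpha}}$ uniform in $t$. Consider the nonlinear operator acting \emph{tangentially} to the slices,
\begin{equation*}
K_t v=\Delta^{\mathrm{tan}} v-e^{2v}\kappa_t+\kappa_0,
\end{equation*}
where $\Delta^{\mathrm{tan}}$ is the fibrewise Laplace--Beltrami operator of $g_0$ (i.e.\ the Laplacian of $g_0|_{\D\times\{s\}}$, which involves only tangential derivatives, with coefficients depending smoothly on $s$). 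The linearisation at a solution $u_t$ is $LK_t=\Delta^{\mathrm{tan}}-e^{\phi_t}$ with $\phi_t=\log 2+u_t+\log\kappa_t\in C^\infty_b$. I would first establish that $LK_t$ is an isomorphism $C^{k+2,\alpha}_b(\D\times(-1,1))\to C^{k,\alpha}_b(\D\times(-1,1))$: this is a parametrised version of Lemma \ref{l.invertouille}, and its proof is essentially the same, since all the ingredients (interior Schauder estimates, the maximum principle with the radial barrier $v(x)=a\cosh(\lambda(d_s(x,x_0)-b))$ built in each slice, triviality of the kernel by exponential decay along slices) go through uniformly in the transverse parameter $s$ because $g_0$ has bounded geometry on the product and the slice metrics are uniformly pinched. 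Note the operator $\Delta^{\mathrm{tan}}-e^{\phi_t}$ is not uniformly elliptic on the full product (there is no control in the $s$-direction), but it is fibrewise elliptic with the needed uniformity, which is all the barrier and Schauder arguments require.

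Granting this, openness of the set $I\subset[0,1]$ of parameters for which a solution $u_t\in C^\infty_b$ exists follows from the implicit function theorem in Banach spaces exactly as in the proof of Theorem \ref{th_bounds_elliptic}: near $t\in I$ one solves $K_sv=0$ for $v\in C^{2,\alpha}_b$, bootstraps to $v\in C^\infty_b$ by fibrewise elliptic regularity (again the $s$-derivatives are controlled a posteriori because the coefficients are smooth in $s$ and everything is bounded), notes $e^{2v}g_0$ is fibrewise complete since $v$ is bounded, and reads off that its slice curvature is $-\kappa_s$. Closedness of $I$ follows from the a priori bounds: the analogue of Lemma \ref{bornouille} applied slicewise gives, for each $(k,\alpha)$, a bound $\|u_t\|_{C^{k+2,\alpha}(\D\times\{s\})}\leq C_{k,\alpha}$ uniform in $s$ and $t$ (depending only on the uniform bounds for $\log\kappa_t$ and on the geometry of $g_0$), hence a uniform bound $\|u_t\|_{C^{k+2,\alpha}_b(\D\times(-1,1))}\leq C_{k,\alpha}$; then Arzel\`a--Ascoli on an exhaustion by compact subsets of $\D\times(-1,1)$, together with fibrewise uniqueness (Lemma \ref{l.unicitouille} applied in each slice), lets one pass to a limit solution at any $t\in\overline{I}$. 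Since $0\in I$, we get $1\in I$, i.e.\ a solution $u\in C^\infty_b(\D\times(-1,1))$; uniqueness of $u$ is Lemma \ref{l.unicitouille} applied slice by slice.

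The main obstacle is the first step: proving that the linearised operator $LK_t$, which is only \emph{fibrewise} elliptic and whose coefficients depend on the transverse variable $s$, is invertible between the Banach spaces $C^{k+2,\alpha}_b$ and $C^{k,\alpha}_b$ of the product, and in particular that the resulting solution $v$ has bounded derivatives \emph{including} $s$-derivatives. The point is that one never differentiates the PDE in $s$; instead one notes that $\partial_s^j u$ satisfies, by differentiating the equation $K_su_s=0$ in the parameter, a linear fibrewise-elliptic equation of the form $LK_s(\partial_s u)=(\text{smooth bounded data built from }\partial_s g_0,\partial_s\kappa_s)$, and invertibility of $LK_s$ in $C^{k+2,\alpha}_b$ then bounds $\partial_s u$; iterating gives bounds on all $\partial_s^j u$ and mixed derivatives, so $u\in C^\infty_b(\D\times(-1,1))$. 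The rest is a routine transcription of Section \ref{ss.prescription_disque} with ``leaf'' replaced by ``slice $\D\times\{s\}$'' and all estimates tracked uniformly in $s$.
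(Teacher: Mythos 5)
There is a genuine gap in what you yourself identify as the main step. The operator $LK_t=\Delta^{\mathrm{tan}}-e^{\phi_t}$ contains no derivatives in the transverse variable $s$, so it cannot be an isomorphism between the \emph{isotropic} H\"older spaces $C^{k+2,\alpha}_b(\D\times(-1,1))$ and $C^{k,\alpha}_b(\D\times(-1,1))$ of the product: it gains two derivatives only along the slices. Concretely, if $f\in C^{0,\alpha}_b$ of the product is merely H\"older in $s$, the fibrewise solution $u(\cdot,s)$ of $LK_t\,u=f$ is in general not even once differentiable in $s$, hence not in $C^{2,\alpha}_b$ of the product; surjectivity fails, and with it the implicit function theorem as you set it up for the openness step of your continuity method on the product. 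The barrier and Schauder arguments do go through uniformly in $s$, as you say, but they only control tangential derivatives and give no information on $\partial_s u$, which is exactly what membership in $C^{k+2,\alpha}_b$ of the product requires.

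The repair is to abandon the continuity method on the product: Theorem \ref{th_bounds_elliptic}, applied to each slice $(\D\times\{s\},g_0|_{\D\times\{s\}})$ (complete, simply connected, with curvature uniformly pinched and uniform bounds on $\log\kappa_0$, $\log\kappa$), already gives existence, uniqueness and bounds $\|u(\cdot,s)\|_{C^{k+2,\alpha}}\leq C_{k,\alpha}$ uniform in $s$ (Lemmas \ref{bornouille} and \ref{l.unicitouille}). The only new content of Theorem \ref{t.var_lisse} is transverse regularity, and this is exactly how the paper obtains it: by the smooth dependence on parameters in the implicit function theorem, the linearisation being the slicewise isomorphism of Lemma \ref{l.invertouille}, the solution $u(\cdot,s)$ varies smoothly with $s$, with derivatives bounded because the constants are uniform. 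Equivalently, one differentiates the equation in $s$ and inverts $LK$ \emph{slicewise} with uniform constants to bound all $\partial_s^j u$ and mixed derivatives — which is essentially the argument of your final paragraph, except that it must be formulated fibrewise (or in anisotropic spaces, bounded in $s$ with values in tangential $C^{k+2,\alpha}$), not as an isomorphism between isotropic product H\"older spaces. With that correction your scheme collapses to the paper's proof; the detour through a new continuity method on the product is unnecessary.
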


\paragraph{\textbf{Feuilletages et tubes normaux}} Soit $(M,\cF)$ une variété feuilletée compacte de codimension $q$. Par compacité de l'espace ambiant il existe un nombre fini de disques de dimension $q$ plongés dans $M$, notés $T_1,\ldots T_n$, qui sont disjoints, transverses à $\cF$ et dont l'union $T=T_1\sqcup T_2\sqcup\ldots\sqcup T_n$ rencontre toute feuille de $\cF$. On appelle $T$ un \emph{système complet de transversales}.

Nous supposons à présent que $(M,\cF)$ soit un feuilletage par surfaces hyperboliques qui possède un \emph{système complet de tubes normaux}, c'est-à-dire qu'il existe une variété $T$ de dimension $q$ ayant un nombre fini de composantes connexes ainsi qu'un difféomorphisme local lisse et surjectif et préservant les feuilletages
$$\sigma:(\D\times T,\cH)\to(M,\cF),$$
$\cH$ étant le feuilletage horizontal de $\D\times T$, vérifiant de plus
\begin{enumerate}
\item pour tout $s\in T$, la restriction de $\sigma$ à $\D\times\{s\}$ est le revêtement universel de la feuille passant par $\sigma(0,s)$;
\item l'image par $\sigma$ de ${\{0\}\times T}$ est un système complet de transversales pour $\cF$.
\end{enumerate}

La notion de \emph{tube normal} apparaît dans \cite{Alcalde_Dalbo_Martinez_Verjovsky} dans le cadre des feuilletages riemanniens \emph{minimaux} (dont toutes les feuilles sont denses). Dans ce cadre toutes les feuilles intersectent une transversale donnée et le système de tubes normaux peut donc se réduire à un unique tube normal. Nous avons adapté cette définition au cas non minimal afin de s'assurer d'avoir une application surjective. Avant de discuter l'existence d'un système complet de tubes normaux nous énonçons le résultat principal de cette sous-section.

\begin{theorem}[Prescription lisse de la courbure]\label{t.prescription_lisse_feuilletage} Soit $(M,\cF)$ un feuilletage compact par surfaces hyperboliques possédant un système complet de tubes normaux et soit $g_0$ une métrique riemannienne sur $M$. Alors pour toute fonction lisse $\kappa:M\to(0,\infty)$ il existe une unique métrique riemannienne lisse conformément équivalente à $g$ dont la courbure gaussienne de la restriction à toute feuille soit en tout point donnée par $-\kappa$.
\end{theorem}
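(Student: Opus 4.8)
Un feuilletage compact étant en particulier une lamination compacte, on applique d'emblée le Théorème \ref{prescription_courboubouille}: il existe une unique métrique laminée $\bar g=e^{2\bar u}g_0$ dans la classe conforme de $g_0$ dont la courbure gaussienne des feuilles vaut partout $-\kappa$, et l'on a $\bar u\in C^\infty_l(M)$. L'énoncé du Théorème \ref{t.prescription_lisse_feuilletage} se réduit donc à montrer que $\bar u$ est \emph{lisse} sur $M$, l'unicité étant claire: une métrique lisse conforme à $g_0$ et de courbure $-\kappa$ en restriction aux feuilles est \emph{a fortiori} une métrique laminée, d'où la conclusion par l'unicité dans le Théorème \ref{prescription_courboubouille}. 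Le plan est de relever le tout sur le domaine développant $\D\times T$, où le feuilletage devient un \emph{produit}, d'y établir la régularité transverse de la solution à l'aide du théorème des fonctions implicites, puis de faire redescendre cette régularité sur $M$ en utilisant que $\sigma$ est un difféomorphisme local.

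Posons $\tilde g_0=\sigma^\ast g_0$, $\tilde\kappa=\kappa\circ\sigma$ et $u=\bar u\circ\sigma$ sur $\D\times T$. Comme $\sigma$ est un difféomorphisme local et que $M$ est compacte, $\sigma$ est une isométrie locale de $(\D\times T,\tilde g_0)$ sur $(M,g_0)$. Pour chaque $s\in T$, la restriction $\tilde g_0|_{\D\times\{s\}}$ est donc une métrique complète sur le disque, à géométrie uniformément bornée (c'est le relèvement, via le revêtement universel $\sigma_s$, de la métrique d'une feuille de $\cF$), et en particulier de courbure $-\kappa_{0,s}$ uniformément minorée; de même $\tilde\kappa$ et $u$ sont bornées, avec toutes leurs dérivées le long des feuilles de $\cH$ uniformément bornées, et $\log\tilde\kappa\in C^\infty_b(\D\times T)$. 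En particulier, pour tout $s$ et toute paire $(k,\alpha)$, on a $u(\cdot,s)\in C^{k+2,\alpha}(\D\times\{s\})$, avec des bornes uniformes en $s$. Enfin, la restriction de $e^{2u}\tilde g_0$ à $\D\times\{s\}$ est le relèvement par $\sigma_s$ de la restriction de $\bar g$ à la feuille passant par $\sigma(0,s)$, qui est complète et de courbure $-\tilde\kappa_s$; autrement dit $u(\cdot,s)$ résout l'équation $\Delta_{\tilde g_0|_s}u(\cdot,s)=e^{2u(\cdot,s)}\tilde\kappa_s-\kappa_{0,s}$, dont elle est en outre \emph{l'unique} solution bornée: deux solutions bornées diffèrent d'une fonction $v\in C^\infty_b$ vérifiant $\Delta v=e^\phi v$ avec $\phi\in C^\infty_b$ et $e^\phi\geq\eta>0$, et le Lemme \ref{l.invertouille} (qui n'exige que la minoration de la courbure de la base, et non sa négativité) force alors $v\equiv 0$.

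Fixons $s_0\in T$ et une paire $(k,\alpha)$, et considérons $G(w,s)=\Delta_{\tilde g_0|_s}w-e^{2w}\tilde\kappa_s+\kappa_{0,s}$, vue comme une application \emph{lisse} de $C^{k+2,\alpha}(\D)\times(\text{voisinage de }s_0)$ dans $C^{k,\alpha}(\D)$ (normes de Hölder relatives à $\tilde g_0|_{s_0}$); sa régularité, et les bornes uniformes sur ses coefficients, reposent sur le fait que $\tilde g_0$ et $\tilde\kappa$ sont lisses sur le \emph{produit} $\D\times T$ -- c'est ici qu'intervient de façon essentielle la développabilité du feuilletage. On a $G(u(\cdot,s),s)=0$, et la linéarisation partielle $D_wG(u(\cdot,s_0),s_0)=\Delta_{\tilde g_0|_{s_0}}-2e^{2u(\cdot,s_0)}\tilde\kappa_{s_0}$ est un isomorphisme d'espaces de Hölder par le Lemme \ref{l.invertouille}: ses hypothèses sont vérifiées, la courbure de la base étant minorée et le coefficient d'ordre zéro étant minoré par une constante strictement positive et appartenant à $C^\infty_b$, puisque $u$ et $\log\tilde\kappa$ appartiennent à $C^\infty_b$. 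Le théorème des fonctions implicites fournit alors, pour $s$ voisin de $s_0$, une famille lisse $s\mapsto w(s)$ de solutions de $G(\cdot,s)=0$ avec $w(s_0)=u(\cdot,s_0)$; chaque $w(s)$ étant bornée, elle coïncide avec $u(\cdot,s)$ par l'unicité des solutions bornées notée ci-dessus. Ainsi $s\mapsto u(\cdot,s)$ est lisse au voisinage de $s_0$; en faisant varier $s_0$ et $(k,\alpha)$, on obtient $u\in C^\infty(\D\times T)$.

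Pour conclure, $\sigma$ étant un difféomorphisme local surjectif et $u=\bar u\circ\sigma$ étant lisse, $\bar u$ est lisse sur $M$ (localement, c'est la composée de $u$ avec un inverse local de $\sigma$). La métrique $g=e^{2\bar u}g_0$ est donc une métrique riemannienne lisse sur $M$, conformément équivalente à $g_0$, dont la restriction à toute feuille a courbure gaussienne $-\kappa$ en tout point; l'unicité a déjà été notée. La principale difficulté est celle du troisième paragraphe: faire passer la régularité transverse de la simple continuité -- fournie par le Théorème \ref{prescription_courboubouille}, c'est-à-dire par le théorème de Candel -- à la régularité $C^\infty$, ce qui n'est possible, via le théorème des fonctions implicites, que parce que la présence d'un système complet de tubes normaux ramène le problème à un produit $\D\times T$ sur lequel aucune holonomie ne vient obstruer la dépendance lisse de la solution en la feuille, contrairement au cas général évoqué au \S \ref{ss.variation_lisse}.
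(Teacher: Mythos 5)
Votre preuve est correcte et repose sur le même mécanisme que celle du texte — réduction au produit $\D\times T$ grâce au système complet de tubes normaux, inversibilité de l'opérateur linéarisé (Lemme \ref{l.invertouille}) et théorème des fonctions implicites pour la dépendance transverse, puis descente sur $M$ par unicité — mais l'organisation logique diffère. Le texte ne fait pas appel au Théorème \ref{prescription_courboubouille}: il normalise d'abord $g_0$ par le théorème de Ghys (Théorème \ref{negcurvedmetrics}) pour avoir des feuilles à courbure négative pincée, puis invoque directement le Théorème \ref{t.var_lisse}, qui fournit d'un coup l'existence, l'unicité et la variation lisse sur le produit; à ce stade la preuve est donc indépendante de la machinerie de Cheeger--Gromov/Candel. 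Vous partez au contraire du Théorème \ref{prescription_courboubouille} (donc de Candel), ce qui vous donne gratuitement, par compacité de $M$, l'existence de la solution feuilletée et toutes les bornes a priori ($\bar u\in C^\infty_l$), et vous n'utilisez les fonctions implicites que pour promouvoir la régularité transverse de $C^0$ à $C^\infty$, en identifiant la branche obtenue à la solution connue via l'unicité des solutions bornées; c'est ce qui vous permet, à juste titre, de sauter l'étape de Ghys, le Lemme \ref{l.invertouille} ne demandant qu'une minoration de la courbure (et la géométrie bornée, acquise ici puisque les feuilles relevées revêtent celles d'un feuilletage compact). Votre troisième paragraphe re-démontre ainsi en substance le Théorème \ref{t.var_lisse}. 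Un seul point à formuler plus précisément: la régularité de $(w,s)\mapsto G(w,s)$ en la variable $s$, dans les normes de Hölder uniformes sur le disque, ne découle pas de la seule lissité de $\tilde g_0$ et $\tilde\kappa$ sur le produit $\D\times T$; ce qu'il faut, ce sont les bornes de type $C^\infty_b$ relatives à la métrique complète $\sigma^\ast g_0$ (c'est exactement l'hypothèse du Théorème \ref{t.var_lisse}), bornes que vous mentionnez d'ailleurs et qui proviennent du fait que $\sigma$ est une isométrie locale sur la variété compacte $M$ — c'est elles, et non la lissité seule, qui font fonctionner le théorème des fonctions implicites, au même niveau de détail que celui adopté par le texte.
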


\begin{proof}[Preuve] En utilisant le Théorème de Ghys si besoin (voir le Théorème \ref{negcurvedmetrics}) il est possible de supposer que la métrique $g_0$ induit une métrique à courbure négative (uniformément pincée) dans toute feuille (la preuve donnée en appendice nous fournit une telle métrique lisse dans ce contexte: voir \cite{AY} pour plus de détails). Il ne nous reste plus qu'à appliquer le Théorème \ref{t.var_lisse} au relevé $\tilde g_0$ de $g_0$ au système $\D\times T$ via $\sigma$ et à la fonction $\kappa\circ\sigma:\D\times T\to(0,\infty)$ pour obtenir l'unique métrique lisse dans la classe conforme de $\tilde g_0$ et dont la courbure dans les feuilles de $\cH$ est donnée par $-\kappa\circ\sigma$. Par unicité elle descend en une métrique sur $M$, solution au problème.
\end{proof}

\paragraph{\textbf{Exemples}} Les feuilletages suivants possèdent des systèmes complets de tubes normaux, et par conséquent, on peut leur appliquer le Théorème \ref{t.prescription_lisse_feuilletage}.
\begin{enumerate}
\item Les feuilletages obtenus par \emph{suspension} d'une action d'un groupe de surface de genre $\geq 2$ sur une variété compacte (voir \cite{Godbillon}).
\item Les feuilletages \emph{transversalement de Lie}, qui sont définis par des submersions locales sur un groupe de Lie, les changements de cartes transverses étant des translations de ce groupe de Lie. Plus généralement ce théorème s'applique à tout feuilletage défini par fibration équivariante (voir \cite[\S II.1.4]{HHA}).
\item Les feuilletages \emph{riemanniens}, définis par des submersions locales sur une variété riemannienne, dont les changements de cartes sont transversalement des isométries locales. Ces feuilletages sont décrits par le \emph{théorème de structure de Molino}: voir \cite{Molino,Molino_livre}.
\end{enumerate}

Ces feuilletages ont en commun la propriété suivante. Leurs groupoïdes d'homotopie $\Pi_1(\cF)$ (voir la Remarque \ref{rem_groupoide}) sont localement triviaux: voir \cite[\S 2.A]{Alcalde_groupoides} pour le cas des feuilletages de Lie, et \cite[\S 2.C]{Alcalde_groupoides} pour le cas des feuilletages riemannien.

Le fait que tout feuilletage dont le groupoïde d'homotopie est localement trivial possède un système complet de tubes normaux s'obtient en copiant verbatim la preuve de \cite[Proposition 3]{Alcalde_Dalbo_Martinez_Verjovsky}  (faite dans le cas minimal).

Nous donnons ci-dessous trois exemples intéressants de feuilletages auxquels s'applique ce théorème.
\begin{itemize}
\item La suspension d'une représentation d'un groupe de surface hyperbolique sur un sous-groupe libre de rotations de la sphère de dimension $2$. C'est un exemple de feuilletage riemannien dont les feuilles sont quasi-isométriques à des arbres. Dans la même veine, \cite{BGSS} donne des représentations fidèles de n'importe quel groupe de surface hyperbolique dans le groupe des rotations $SO(3)$ dont les images sont denses. Par suspension, on obtient des feuilletages riemanniens et minimaux dont les feuilles sont simplement connexes.
\item Le feuilletage centre-stable du flot géodésique sur le fibré unitaire tangent d'une surface hyperbolique, dont les feuilles sont des disques et des cylindres hyperboliques.
\item Le feuilletage centre-stable de la suspension d'un automorphisme linéaire et hyperbolique du tore de dimension $2$, qui est un feuilletage transversalement affine sur une solvariété de dimension $3$ dont les feuilles sont des disques et des cylindres hyperboliques.
\end{itemize}

\section{Appendices}

\subsection{Sur la topologie de Cheeger-Gromov} Ci-dessous nous prouvons les résultats clés que nous avons utilisés dans cet article, à savoir le Lemme \ref{equivalence_subtile} et le Théorème \ref{CGHausdorff}. Nous commençons la preuve de ces résultats par un lemme élémentaire.

\begin{lemma}\label{l.compact_ouvert}
Soient $M$ et $N$ deux variétés de dimension finie et $\Phi_n:M\to N$ une suite d'homéomorphismes sur leurs images convergeant pour la topologie compacte-ouverte vers $\Phi:M\to N$, un homéomorphisme sur son image.

Alors pour tout compact $K\dans\Iim(\Phi)$ il existe $n_0\in\N$ tel que pour tout $n\geq n_0$, $K\dans\Iim(\Phi_n)$. En particulier la suite des restrictions $(\Phi^{-1}_n|_K)_{n\geq n_0}$ converge vers $\Phi^{-1}|_K$ dans la topologie compacte-ouverte.
\end{lemma}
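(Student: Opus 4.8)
The plan is to reduce the whole statement to the Brouwer mapping degree together with the invariance of domain theorem; this is where it is used that $M$ and $N$ have the same dimension $d$. To begin with, invariance of domain shows that $\Iim(\Phi)$ and each $\Iim(\Phi_n)$ are open in $N$ and that $\Phi^{-1}$ and the $\Phi_n^{-1}$ are continuous on their (open) domains, so that both assertions of the lemma make sense.

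The heart of the argument is a \emph{local} statement. Fix $p\in\Iim(\Phi)$ and put $q=\Phi^{-1}(p)$. Choose a chart of $M$ around $q$ in which $\bar B:=\bar B(q,r)$ is a precompact closed ball, taken small enough that $\Phi(\bar B)$ lies inside a chart of $N$ around $p$ which we identify with $\R^d$. Since $\Phi|_{\bar B}$ is a continuous injection with $p\in\Phi(B)$, the degree-theoretic form of invariance of domain gives $\deg(\Phi,B,p)=\pm 1$; setting $2\delta:=\dist\!\big(p,\Phi(\partial B)\big)>0$, local constancy of the degree yields $\deg(\Phi,B,y)=\pm 1$ for every $y$ in $B(p,2\delta)$. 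Compact--open convergence is uniform on the compact set $\bar B$, so there is $n_0$ with $\|\Phi_n-\Phi\|_{C^0(\bar B)}<\delta$ for $n\geq n_0$; for such $n$ the affine homotopy joining $\Phi|_{\partial B}$ to $\Phi_n|_{\partial B}$ stays at distance $\geq\delta$ from $p$, hence avoids $B(p,\delta)$, and homotopy invariance of the degree gives $\deg(\Phi_n,B,y)=\pm 1\neq 0$ for all $y\in B(p,\delta)$. Therefore $B(p,\delta)\dans\Phi_n(B)\dans\Iim(\Phi_n)$ whenever $n\geq n_0$; and since $\Phi_n$ is injective, every point of $B(p,\delta)$ has its unique $\Phi_n$-preimage inside $B$, i.e.\ $\Phi_n^{-1}\big(B(p,\delta)\big)\dans B(q,r)$.

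The first assertion then follows by a compactness argument: the balls $B(p,\delta_p)$ provided above, for $p$ ranging over $K$, form an open cover of $K$; extracting a finite subcover $K\dans\bigcup_{i=1}^{m}B(p_i,\delta_{p_i})$ and setting $n_0=\max_i n_{p_i}$ gives $K\dans\Iim(\Phi_n)$ for $n\geq n_0$. For the second assertion I would show that $\Phi_n^{-1}|_K$ converges to $\Phi^{-1}|_K$ uniformly on $K$, which is the same as convergence in the compact--open topology since $K$ is compact and $M$ is metrizable. Arguing by contradiction, suppose there are $\eps>0$, a subsequence, and points $p_k\in K$ with $\dist_M\!\big(\Phi_{n_k}^{-1}(p_k),\Phi^{-1}(p_k)\big)\geq\eps$ (note $p_k\in\Iim(\Phi_{n_k})$ for $k$ large, by the first assertion). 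By compactness of $K$ we may assume $p_{k_j}\to p\in K$; the local statement, applied with arbitrarily small $r$, forces $\Phi_{n_{k_j}}^{-1}(p_{k_j})\to\Phi^{-1}(p)$, while continuity of $\Phi^{-1}$ gives $\Phi^{-1}(p_{k_j})\to\Phi^{-1}(p)$, a contradiction.

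The main obstacle is the local step: passing from ``$p$ belongs to the embedded set $\Iim(\Phi)$'' to ``a fixed neighbourhood of $p$ belongs to $\Iim(\Phi_n)$ for all large $n$'' is a stability statement under $C^0$-small perturbations of an embedding, and some input from algebraic topology --- the Brouwer degree, equivalently invariance of domain and the Jordan--Brouwer separation theorem --- seems unavoidable here. Everything else (the covering argument, and the passage from pointwise to uniform convergence of the inverses) is elementary point-set topology.
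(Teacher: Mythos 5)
Your proof is correct, and it shares the paper's two-step skeleton (a local stability statement at each point of $K$, then compactness of $K$), but the local step is implemented by a genuinely different mechanism. The paper fixes $y\in K$, pulls back small balls of $N$ centred at $y$ to the compact set $X_r=\Phi^{-1}(\overline{B_r})\subset M$, and from uniform convergence on $X_r$ gets $\Phi_n(x)\in B_{r/2}$ and $\Phi_n(\partial X_r)\cap B_{r/2}=\emptyset$, concluding $B_{r/2}\subset\Iim(\Phi_n)$ in one line (``puisque $\Phi_n$ est un homéomorphisme sur son image''); that line silently packages exactly the topological input you make explicit, namely invariance of domain (openness of $\Phi_n(\mathrm{int}\,X_r)$) together with connectedness of $B_{r/2}$, or equivalently a degree argument. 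You instead push forward a small ball $B(q,r)\subset M$ and argue via the Brouwer degree: local degree $\pm1$ of the injective map $\Phi$, then homotopy invariance under the $C^0$-small affine perturbation joining $\Phi$ to $\Phi_n$ on $\partial B$, giving $B(p,\delta)\subset\Phi_n(B(q,r))$ together with the localization $\Phi_n^{-1}(B(p,\delta))\subset B(q,r)$. The two mechanisms are equivalent in substance, but yours is more quantitative, and you use the extra localization to prove the ``en particulier'' assertion (uniform convergence of the inverses on $K$) in detail, a point the paper's proof leaves implicit. Two small details to tidy in your write-up: before writing $\lVert\Phi_n-\Phi\rVert_{C^0(\overline{B})}<\delta$ and $\deg(\Phi_n,B,\cdot)$ in the chart of $N$, note that uniform convergence on $\overline{B}$ with respect to an ambient metric of $N$ already forces $\Phi_n(\overline{B})$ into that chart for $n$ large; and, as you rightly observe, the argument (like the paper's) uses $\dim M=\dim N$, which the statement phrases only as ``dimension finie'' but which is needed for the conclusion and holds in all the applications in the paper.
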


\begin{proof}[Preuve]
Soient $y\in K$ et $x=\Phi^{-1}(y)\in M$. Soit $\Psi:(\R^d,0)\to (N,y)$ un homéomorphisme sur un ouvert de $N$ contenant $y$. Pour tout $r>0$, notons $B_r$ l'image par $\Psi$ de la boule centrée en $0$ de rayon $r$, et $\overline{B_r}$, son adhérence. Soit $r>0$ vérifiant $B_{2r}\dans\Iim(\Phi)$ et soit $X_r=\Phi^{-1}(\overline{B_r})$. Puisque $X_r$ et $\partial X_r$ sont tous deux compacts, nous avons, pour $n$ suffisamment grand, $\Phi_n(x)\in B_{r/2}$, $\Phi_n(X_r)\dans B_{3r/2}$ et $\Phi_n(\partial X_r)\dans B_{3r/2}\moins B_{r/2}$. Puisque $\Phi_n$ est un homéomorphisme sur son image, il vient $B_{r/2}\dans\Iim(\Phi_n)$. La première propriété est alors conséquence de la compacité de $K$.
\end{proof}

\begin{corollary}\label{conv_des_inverses}
Soient $M$ et $N$ deux variétés de dimension finie et $\Phi_n:M\to N$ une suite de difféomorphismes lisses sur leurs images convergeant dans la topologie $\Ciloc$ vers $\Phi:M\to N$, un difféomorphisme lisse sur son image.

Soit $\Omega\dans N$ un ouvert relativement compact inclus dans $\Iim(\Phi)$ et soit $n_0=n_0(\overline{\Omega})$ l'entier obtenu dans le Lemme \ref{l.compact_ouvert}. Alors la suite $(\Phi_n^{-1}|_{\Omega})_{n\geq n_0}$  converge vers $\Phi^{-1}|_\Omega$ dans la topologie $\Ciloc$.
\end{corollary}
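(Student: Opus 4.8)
Le plan est de partir de la convergence $C^0_{\text{loc}}$ des applications réciproques, déjà établie par le Lemme \ref{l.compact_ouvert}, et de la propager à tous les ordres de dérivation en dérivant la relation $\Phi_n\circ\Phi_n^{-1}=\mathrm{id}$. Fixons $\Omega\dans N$ relativement compact avec $\overline{\Omega}\dans\Iim(\Phi)$ et posons $K=\Phi^{-1}(\overline{\Omega})$, un compact de $M$. Pour $n\geq n_0$, l'application $\Phi_n^{-1}$ est bien définie et lisse sur un voisinage de $\overline{\Omega}$ (Lemme \ref{l.compact_ouvert}), et $\Phi_n^{-1}|_{\overline{\Omega}}\to\Phi^{-1}|_{\overline{\Omega}}$ uniformément; il existe donc un compact $K'\dans M$ contenant un voisinage de $K$ et tel que $\Phi_n^{-1}(\overline{\Omega})\dans K'$ pour tout $n$ assez grand. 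C'est ce $K'$ qui jouera le rôle de ``boîte'' dans laquelle vivent toutes les compositions ci-dessous.

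Pour la convergence $C^1$, je dériverais l'identité $\Phi_n\circ\Phi_n^{-1}=\mathrm{id}$ sur $\Iim(\Phi_n)\supset\overline{\Omega}$, ce qui donne, en coordonnées locales et pour $y\in\overline{\Omega}$,
$$D(\Phi_n^{-1})(y)=\left[D\Phi_n\big(\Phi_n^{-1}(y)\big)\right]^{-1}.$$
Comme $\Phi_n\to\Phi$ au sens $\Ciloc$, la suite $D\Phi_n$ converge uniformément vers $D\Phi$ sur $K'$ et $D\Phi$ y est uniformément continue; joint à la convergence uniforme $\Phi_n^{-1}\to\Phi^{-1}$ sur $\overline{\Omega}$, ceci entraîne $D\Phi_n\circ\Phi_n^{-1}\to D\Phi\circ\Phi^{-1}$ uniformément sur $\overline{\Omega}$. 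Puisque $\Phi$ est un difféomorphisme sur son image, $D\Phi$ est inversible, donc uniformément inversible sur le compact $K$; l'identité $A_n^{-1}-A^{-1}=A_n^{-1}(A-A_n)A^{-1}$ montre alors que $[D\Phi_n\circ\Phi_n^{-1}]^{-1}\to[D\Phi\circ\Phi^{-1}]^{-1}$ uniformément sur $\overline{\Omega}$, d'où la convergence $C^1_{\text{loc}}$ sur $\Omega$.

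Pour les ordres supérieurs, je procéderais par récurrence sur $k$: supposons $\Phi_n^{-1}\to\Phi^{-1}$ au sens $C^k_{\text{loc}}$ sur $\Omega$. En dérivant $k$ fois de plus la relation $D(\Phi_n^{-1})(y)=[D\Phi_n(\Phi_n^{-1}(y))]^{-1}$, la règle de Leibniz et la formule de Faà di Bruno expriment $D^{k+1}(\Phi_n^{-1})(y)$ comme un polynôme universel, indépendant de $n$, en les dérivées de $\Phi_n$ jusqu'à l'ordre $k+1$ évaluées en $\Phi_n^{-1}(y)$, en les dérivées de $\Phi_n^{-1}$ jusqu'à l'ordre $k$ évaluées en $y$, et en la matrice $[D\Phi_n(\Phi_n^{-1}(y))]^{-1}$. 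Chacune de ces familles converge uniformément sur $\overline{\Omega}$ — les premières par convergence $C^{k+1}_{\text{loc}}$ de $\Phi_n$ sur $K'$ composée avec la convergence uniforme de $\Phi_n^{-1}$, les deuxièmes par hypothèse de récurrence, la dernière par l'argument du paragraphe précédent — donc $D^{k+1}(\Phi_n^{-1})$ converge uniformément sur $\overline{\Omega}$ également. Ceci achève la récurrence, et donc la preuve.

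Le seul point demandant un peu de soin est purement technique: contrôler uniformément ces compositions de suites convergentes, c'est-à-dire s'assurer que les applications intérieures $\Phi_n^{-1}$ restent à valeurs dans le compact fixe $K'$ (ce que garantit précisément le Lemme \ref{l.compact_ouvert}) et que les applications extérieures $D^j\Phi_n$ convergent uniformément sur $K'$ vers une limite uniformément continue. C'est l'hypothèse de relative compacité de $\Omega$ dans $\Iim(\Phi)$ qui rend ceci licite; aucune difficulté conceptuelle nouvelle n'apparaît.
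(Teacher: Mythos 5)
Votre argument est correct et suit exactement la même voie que l'article, dont la preuve se réduit à la phrase \og conséquence triviale du Lemme \ref{l.compact_ouvert} et de la chain rule \fg : vous ne faites qu'expliciter cette indication en dérivant $\Phi_n\circ\Phi_n^{-1}=\mathrm{id}$ et en propageant la convergence par récurrence. Rien à redire.
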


\begin{proof}[Preuve]
C'est une conséquence triviale du Lemme \ref{l.compact_ouvert} et de la ``chain rule''.
\end{proof}

Nous pouvons alors prouver le Lemme \ref{equivalence_subtile}, dont nous reproduisons ci-dessous l'énoncé par souci de lisibilité.

\begin{lemma}
Soient $M$ et $N$ deux variétés lisses de même dimension. Soient $(g_n)$ et $(h_n)$ deux suites de métriques riemanniennes sur $M$ et $N$ respectivement, qui convergent vers deux métriques $g$ et $h$ dans la topologie  $\Ciloc$. Soit $K\dans N$ une partie compacte et $\Phi_n:M\to K$ une suite d'applications lisses telles que pour tout $n\in\N$, $\Phi_n^\ast h_n=g_n$. Alors la suite $(\Phi_n)$ possède une sous-suite convergeant vers une application $\Phi:M\to K$ au sens $\Ciloc$ vérifiant de plus $\Phi^\ast h=g$.
\end{lemma}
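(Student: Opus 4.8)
Le plan repose sur l'observation suivante: puisque $\Phi_n^\ast h_n=g_n$ et que $g_n$ est définie positive, $d\Phi_n$ est injective en tout point, donc, les dimensions coïncidant, chaque $\Phi_n$ est un difféomorphisme local, et même une \emph{isométrie locale} de $(M,g_n)$ dans $(N,h_n)$. Or une isométrie locale conjugue les connexions de Levi-Civita, donc envoie géodésiques sur géodésiques: elle est totalement géodésique, en particulier \emph{harmonique}. Ainsi $\Phi_n$ est une application harmonique de $(M,g_n)$ vers $(N,h_n)$, ce qui est le fait clé: le système $\Phi^\ast h=g$ étant surdéterminé ($d(d+1)/2$ équations pour $d$ inconnues), on ne peut en déduire directement la régularité, tandis que l'équation des applications harmoniques, elle, est un système elliptique déterminé. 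La stratégie sera donc: (i) obtenir des bornes $C^1$ uniformes sur les compacts; (ii) en déduire, via l'équation harmonique, des bornes $\Ciloc$ uniformes; (iii) extraire par Arzela-Ascoli et diagonalisation une sous-suite convergente; (iv) passer à la limite dans $\Phi_n^\ast h_n=g_n$.

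Pour (i), je fixerais des métriques de fond lisses sur $M$ et sur un voisinage de $K$ dans $N$. Comme $g_n\to g$ et $h_n\to h$ dans $\Ciloc$ et que $h$ est une vraie métrique sur le compact $K$, il existe sur tout compact $L\dans M$ des constantes indépendantes de $n$ majorant $g_n$ sur $L$ et minorant $h_n$ sur $K$; l'identité $\|(d\Phi_n)_x v\|_{h_n}=\|v\|_{g_n}$ borne alors $\|d\Phi_n\|$ uniformément sur $L$. Les $\Phi_n|_L$ étant uniformément lipschitziennes et à valeurs dans le compact $K$, le théorème d'Arzela-Ascoli, joint à un argument diagonal le long d'une exhaustion compacte de $M$, fournit une sous-suite convergeant dans $C^0_{\text{loc}}$ vers une application continue $\Phi:M\to K$. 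Pour (ii), au voisinage de chaque $x_0\in M$, les $\Phi_n$ (pour $n$ grand) et $\Phi$ prennent leurs valeurs dans une même carte de $N$; on y écrit l'équation des applications harmoniques
$$\Delta_{g_n}\Phi_n^k+g_n^{ab}\,{}^{h_n}\!\Gamma_{ij}^k(\Phi_n)\,\partial_a\Phi_n^i\,\partial_b\Phi_n^j=0,$$
système quasilinéaire elliptique à coefficients bornés dans $C^\infty$ (car les métriques convergent dans $\Ciloc$), dont les solutions $\Phi_n$ sont déjà bornées dans $C^{0,1}$. Les estimées de Schauder, itérées, donnent alors des bornes uniformes dans $C^{k,\alpha}_{\text{loc}}$ pour tout $(k,\alpha)$, d'où, par un nouvel argument d'Arzela-Ascoli et de diagonalisation, une sous-suite convergeant vers $\Phi$ dans la topologie $\Ciloc$.

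Enfin, pour (iv), comme $\Phi_n\to\Phi$ dans $C^1_{\text{loc}}$, $g_n\to g$ et $h_n\to h$, il suffit de passer à la limite dans l'identité $\Phi_n^\ast h_n=g_n$ pour obtenir $\Phi^\ast h=g$; l'application limite $\Phi:M\to K$ est alors elle-même une isométrie locale. Le principal obstacle me paraît être le point conceptuel de l'étape (ii): contourner le caractère surdéterminé de $\Phi^\ast h=g$ grâce au fait qu'une isométrie locale est automatiquement harmonique. Une fois cela acquis, le reste --- bornes $C^1$, recollement des cartes grâce à la pré-convergence $C^0$, diagonalisation, passage à la limite --- relève de la routine.
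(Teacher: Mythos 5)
Votre preuve est correcte, mais elle suit une route réellement différente de celle de l'article. Vous exploitez le fait qu'une isométrie locale est harmonique, puis la machinerie elliptique: bornes $C^1$ uniformes tirées directement de $\Phi_n^\ast h_n=g_n$ et de la compacité de $K$, convergence $C^0_{\text{loc}}$ par Arzela-Ascoli, localisation dans des cartes de $N$, équation des applications harmoniques et bootstrap, enfin passage à la limite dans $\Phi_n^\ast h_n=g_n$. L'article procède de façon plus élémentaire, sans aucune EDP: une isométrie locale est déterminée par son $1$-jet en un point, d'où la formule locale $\Phi_n=\exp_{h_n,\Phi_n(x)}\circ D\Phi_n(x)\circ\exp_{g_n,x}^{-1}$ sur une petite boule; la convergence $\Ciloc$ des métriques entraîne celle des applications exponentielles (dépendance lisse des géodésiques en la métrique) et de leurs inverses (Lemme \ref{l.compact_ouvert} et Corollaire \ref{conv_des_inverses}), la compacité de $K$ et un argument diagonal donnent la convergence ponctuelle sur une partie dénombrable dense, et la suite bornée des applications linéaires $D\Phi_n(x)$ admet une sous-suite convergeant vers $A$, de sorte que la limite locale est $\exp_{h,\Phi(x)}\circ A\circ\exp_{g,x}^{-1}$. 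L'approche de l'article achète donc l'élémentarité (aucune théorie de régularité n'est requise), tandis que la vôtre, standard en analyse géométrique, est plus robuste et s'adapterait à des situations où l'on ne dispose que d'une structure d'application harmonique. Un seul point est à préciser chez vous: le démarrage du bootstrap. Avec seulement une borne lipschitzienne sur $\Phi_n$, le second membre de l'équation harmonique n'est que $L^\infty$, alors que les estimées de Schauder exigent un second membre höldérien; il faut donc d'abord passer par les estimées $L^p$ (Calderón-Zygmund) et l'injection de Sobolev pour obtenir des bornes $C^{1,\alpha}$ uniformes sur les compacts, après quoi l'itération de Schauder fonctionne comme vous l'indiquez. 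C'est un point standard qui ne remet pas en cause l'argument.
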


\begin{proof}[Preuve] Premièrement, pour tous $x\in M$, $n\in\N$ et $r>0$ nous notons respectivement $V_n(x,r)$ et $V(x,r)$ les $r$-voisinages de $x$ pour les métriques $g_n$ et $g$ respectivement, ainsi que $B_n(x,r)$ et $B(x,r)$, les boules de rayon $r$ centrées en l'origine dans l'espace tangent $T_xM$ pour ces mêmes métriques.

Soit $X\dans M$ un ensemble dénombrable et dense. Par compacité et un argument diagonal, il existe une application $\Phi:X\to K$ vers laquelle converge ponctuellement une sous-suite de $\Phi_n|_X:X\to K$. Soit $\Omega$ un ouvert relativement compact de $M$. Par compacité il existe $r>0$ strictement inférieur aux rayons d'injectivité de tout point de $K$ pour toutes les métriques $h_n$ et $h$. Soit maintenant $x\in\Omega\cap X$. Pour tout $n\in\N$ nous notons $E_{g_n}=\exp_{g_n,x}$, $E_{g}=\exp_{g,x}$, $E_{h_n}=\exp_{h_n,\Phi_n(x)}$ et $E_{h}=\exp_{h,\Phi(x)}$ les applications exponentielles définies sur les boules de rayon $r$ des espaces tangents correspondant. Alors nous avons pour tout $n$ et $v\in B_n(x,r)$
$$\left(\Phi_n\circ E_{g_n}\right)(v)=\left(E_{h_n}\circ D\Phi_n(x)\right)(v).$$
Nous en déduisons que pour tout $y\in V_n(x,r)$
$$\Phi_n(y)=\left(E_{h_n}\circ D\Phi_n(x)\circ E_{g_n}^{-1}\right)(y).$$

Par le Lemme \ref{l.compact_ouvert} il existe $n_0$ tel que pour tout $n\geq n_0$ , $V(x,r/2)\dans E_{g_n}(B_n(x,r))$. De plus, le Corollaire \ref{conv_des_inverses} entraîne que la suite des restrictions $(E_{g_n}^{-1}|_{V(x,r/2)})_{n\geq n_0}$ converge vers $E_g^{-1}$ dans la topologie $\Ciloc$. D'autre part la suite d'applications linéaires $D\Phi_n(x)$ est bornée et donc possède une sous-suite qui converge vers une application linéaire notée $A$. Enfin, puisque $\Phi_n(x)$ converge vers $\Phi(x)$, la suite $E_{h_n}$ converge vers $E_h$ dans la topologie $\Ciloc$.

Nous déduisons de tout ceci que la suite $\Phi_n$ converge sur $V(x,r/2)$ vers $E_h\circ A\circ E_g'$ au sens $\Ciloc$, et nous obtenons le résultat désiré par un argument diagonal.
\end{proof}

Nous déduisons le Théorème \ref{CGHausdorff}, qui établit la propriété de Hausdorff de la convergence au sens de Cheeger-Gromov.

\begin{theorem}Soit $(M_n,g_n,p_n)$ une suite de variétés riemanniennes pointées qui converge vers $(M,g,p)$ et $(M',g',p')$ dans la topologie de Cheeger-Gromov. Alors il existe un difféomorphisme $\Phi:M\to M'$ tel que $\Phi^\ast g'=g$ et $\Phi(p)=p'$.
\end{theorem}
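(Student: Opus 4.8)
Fix convergence-map sequences $F_n:(M,g,p)\to(M_n,g_n,p_n)$ and $G_n:(M',g',p')\to(M_n,g_n,p_n)$, and choose exhaustions of $M$ and of $M'$ by connected relatively compact open sets $U_1\dans\overline{U_1}\dans U_2\dans\cdots$ with $p\in U_1$ and $\bigcup_k U_k=M$, and $U_1'\dans\overline{U_1'}\dans U_2'\dans\cdots$ with $p'\in U_1'$ and $\bigcup_k U_k'=M'$. The idea is to transport structure through the sequence: form the composite $\Phi_n:=G_n^{-1}\circ F_n$, which goes from (a piece of) $M$ to $M_n$ and back to $M'$, let $n\to\infty$, and extract a limit using Lemma \ref{equivalence_subtile}; then do the same with the roles of $M$ and $M'$ swapped and check that the two limits are mutually inverse.

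First I would make sense of the composite on larger and larger pieces of $M$. Fix $k$. For $n$ large, $F_n|_{U_k}$ is a diffeomorphism onto its image and $F_n^\ast g_n\to g$ in $\Ciloc$ on $\overline{U_k}$; since $U_k$ is connected with finite $g$-diameter, $F_n(U_k)$ has $g_n$-diameter bounded by some $R_k$ for all large $n$, and $F_n(p)=p_n$, so $F_n(U_k)\dans B_{g_n}(p_n,R_k)$. Pick a relatively compact open $U'\dans M'$ with $\overline{B_{g'}(p',2R_k)}\dans U'$; for $n$ large $G_n|_{U'}$ is a diffeomorphism onto its image and $\tfrac12 g'\leq G_n^\ast g_n\leq 2g'$ on $\overline{U'}$. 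A short connectedness argument then gives $B_{g_n}(p_n,R_k)\dans G_n(B_{g'}(p',2R_k))$ for $n$ large: any $g_n$-path of length $<R_k$ issuing from $p_n$ pulls back under $G_n$, as long as it stays in $G_n(U')$, to a $g'$-path of length $<\sqrt2\,R_k$ from $p'$, so it cannot reach $G_n(\partial B_{g'}(p',2R_k))$, where the $g_n$-distance to $p_n$ is at least $\sqrt2\,R_k$. Hence $\Phi_n:=G_n^{-1}\circ F_n$ is well defined and smooth on $U_k$ for $n$ large, takes values in the fixed compact set $K_k:=\overline{B_{g'}(p',2R_k)}$, and satisfies $\Phi_n(p)=G_n^{-1}(p_n)=p'$.

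Now I would apply Lemma \ref{equivalence_subtile} with source manifold $U_k$ carrying the metrics $F_n^\ast g_n\to g$, with target a relatively compact open neighbourhood $N$ of $K_k$ in $M'$ carrying the metrics $h_n:=G_n^\ast g_n\to g'$, and with the maps $\Phi_n:U_k\to K_k\dans N$; the hypothesis $\Phi_n^\ast h_n=(G_n\circ\Phi_n)^\ast g_n=F_n^\ast g_n$ holds by construction. The lemma provides a subsequence of $(\Phi_n|_{U_k})$ converging in $\Ciloc$ to a smooth map $\Phi^{(k)}:U_k\to K_k$ with $(\Phi^{(k)})^\ast g'=g$; moreover, by the description of the limit in the proof of Lemma \ref{equivalence_subtile}, near $p$ one has $\Phi^{(k)}=\exp_{g',p'}\circ A\circ(\exp_{g,p})^{-1}$ with $A=\lim D\Phi_n(p)$, so $\Phi^{(k)}(p)=p'$ and $D\Phi^{(k)}(p)=A$. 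A diagonal extraction over $k$ (the limits agree on overlaps by uniqueness) yields, along some subsequence $(n_j)$, a single smooth map $\Phi:M\to M'$ with $\Phi^\ast g'=g$, $\Phi(p)=p'$, and $D\Phi(p)=A$.

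Finally I would run the same construction with $(M,g,p)$ and $(M',g',p')$ interchanged, refining to a common subsequence, to obtain $\Psi:M'\to M$ with $\Psi^\ast g=g'$, $\Psi(p')=p$, and $D\Psi(p')=A':=\lim\big(DF_{n_j}(p_{n_j})\big)^{-1}\circ DG_{n_j}(p')$. Since $D\Psi_{n_j}(p')\circ D\Phi_{n_j}(p)=\big(DF_{n_j}(p_{n_j})\big)^{-1}\circ DG_{n_j}(p')\circ\big(DG_{n_j}(p')\big)^{-1}\circ DF_{n_j}(p)=\mathrm{id}$ for every $j$, passing to the limit gives $A'\circ A=\mathrm{id}_{T_pM}$, hence also $A\circ A'=\mathrm{id}_{T_{p'}M'}$ (equal finite dimensions). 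Then $\Psi\circ\Phi:M\to M$ is a local isometry fixing $p$ with differential $\mathrm{id}$ at $p$; as $M$ is connected and complete it maps each geodesic from $p$ to itself and $\exp_{g,p}$ is onto, so $\Psi\circ\Phi=\mathrm{id}_M$, and symmetrically $\Phi\circ\Psi=\mathrm{id}_{M'}$. Therefore $\Phi$ is a diffeomorphism with $\Phi^\ast g'=g$ and $\Phi(p)=p'$, as desired. The delicate point is not the analysis — that is entirely packaged in Lemma \ref{equivalence_subtile} — but the fact that Cheeger--Gromov convergence can destroy global topology, so that a priori $M$ and $M'$ need not even be homeomorphic; the way around this is to keep track of the \emph{first-order} behaviour of $\Phi_n$ and $\Psi_n$ at the base point, so that $D\Phi(p)$ and $D\Psi(p')$ are forced to be inverse isomorphisms, after which rigidity of local isometries of complete connected manifolds turns the pair $(\Phi,\Psi)$ into genuine inverse diffeomorphisms.
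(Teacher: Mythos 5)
Your proof is correct, and its core coincides with the paper's: both form the transported maps $\Phi_n=G_n^{-1}\circ F_n$ (the paper's $(F_n')^{-1}\circ F_n$), extract a $\Ciloc$ limit satisfying $\Phi^\ast g'=g$ and $\Phi(p)=p'$ via Lemma \ref{equivalence_subtile}, and globalize by a diagonal argument over an exhaustion. You differ in two places, both to your advantage. First, you actually prove the containment $F_n(U_k)\dans G_n(B_{g'}(p',2R_k))$ by the length-comparison and path-lifting argument, whereas the paper simply asserts the analogous inclusions (\emph{quitte à augmenter} $n_1$); your justification is the right one — only phrase the bound as a bound on $d_{g_n}(p_n,F_n(x))$ using paths from $p$ to $x$ contained in a fixed relatively compact subset of $M$, rather than via the intrinsic diameter of $U_k$, which need not be finite for an arbitrary connected open set. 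Second, the endgame is genuinely different: the paper concludes from completeness that the local isometry $\Phi$ is surjective and declares it a diffeomorphism, which as written leaves open the possibility that $\Phi$ is a nontrivial Riemannian covering; you close exactly this point by running the construction symmetrically to obtain $\Psi$, noting $D\Psi_n(p')\circ D\Phi_n(p)=\mathrm{id}$ for every $n$, and invoking the rigidity of local isometries of complete connected manifolds (a local isometry $f$ with $f(p)=p$ and $Df(p)=\mathrm{id}$ equals the identity, via $f\circ\exp_p=\exp_p\circ Df_p$ and Hopf--Rinow). So your route buys a fully justified two-sided inverse, at the modest cost of tracking the base-point differentials through the limit; as in the paper, connectedness of $M$ and $M'$ is implicitly used and should be stated.
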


\begin{proof}[Preuve]
Soient $(F_n)$ et $(F_n')$ des suites d'applications de convergence de $(M_n,g_n,p_n)$ vers $(M,g,p)$ et $(M',g',p')$ respectivement. Fixons $r>0$ et soit $V$ le $r$-voisinage de $p'\in M'$ pour $g'$. Soit $n_1\in\N$ tel que pour tout $n\geq n_1$ l'application $F_n'$ soit un difféomorphisme de $V$ sur son image. Quitte à augmenter $n_1$ nous pouvons supposer que $F_n'(V)$ est inclus dans l'image de $F_n$ pour tout $n\geq n_1$, et que $(F_n^{-1}\circ F_n')(V)$ contient le $r/2$-voisinage de $p\in M$ pour $g$. Nous noterons $U$ ce voisinage.

Rappelons que si $(N,h)$ est une variété riemannienne et $k\geq 0$ nous notons $\|.\|_{C^k(N,h)}$ la norme $C^k$ des sections de fibrés sur $N$ par rapport à $h$. Par hypothèse
$$\lim_{n\to\infty}\left\|(F_n')^\ast g_n-g'\right\|_{C^k(V,g')}=0.$$
Nous en déduisons successivement que
$$\lim_{n\to\infty}\left\|(F_n')^\ast g_n-g'\right\|_{C^k(V,(F_n')^\ast g_n)}=0,$$
$$\lim_{n\to\infty}\left\|g_n-(F_n')_\ast g'\right\|_{C^k(F_n'(V),g_n)}=0,$$
et enfin
$$\lim_{n\to\infty}\left\|(F_n)^\ast g_n-(F_n)^\ast (F_n')_\ast g'\right\|_{C^k(U,(F_n)^\ast g_n)}=0.$$
Or, par hypothèse,
\begin{equation}\label{eqhypouille}
\lim_{n\to\infty}\left\|(F_n)^\ast g_n-g\right\|_{C^k(U,g)}=0,
\end{equation}
d'où
\begin{equation}\label{conseqeshypouille}
\lim_{n\to\infty}\left\|(F_n)^\ast g_n-(F_n)^\ast (F_n')_\ast g'\right\|_{C^k(U,g)}=0.
\end{equation}
Il découle de \eqref{eqhypouille}, de \eqref{conseqeshypouille} et de l'inégalité triangulaire, que $((F_n')^{-1}\circ F_n)^\ast g'$ converge dans la topologie $C^\infty$ vers $g$.

Le lemme précédent implique que la suite $\Phi_n=(F_n')^{-1}\circ F_n$, $n\geq n_1$ possède une sous-suite qui converge dans la topologie $\Ciloc$ vers une application $\Phi:U\to M'$ qui est un difféomorphisme sur son image et telle que $\Phi^\ast g'=g$ sur $U$ et vérifiant de plus $\Phi(p)=p'$. Par un argument diagonal nous pouvons supposer que $\Phi$ est définie sur la variété $M$ toute entière. De plus $\Phi^\ast g'=g$ donc $\Phi$ est une isométrie locale. Comme $M$ et $M'$ sont complètes $\Phi$ est surjective. C'est donc un difféomorphisme entre $M$ et $M'$. 
\end{proof}

\subsection{Gauss-Bonnet et existence de métriques laminées à courbure négative}\label{ss.gaussboubouille}

Nous donnons ici un argument alternatif pour l'existence de métriques laminées à courbure négative indépendant du théorème de Candel. L'argument que nous présentons est dû à Ghys et apparaît dans \cite[Theorem C]{AY}. Nous l'incluons ici car il nous paraît simple et agréable.

\begin{theorem}
\label{negcurvedmetrics}
Soit $(X,\cL)$ une lamination compacte par surfaces hyperboliques et $g$ une métrique laminée. Alors il existe $u\in C^{\infty}_{l}(X)$ telle que la courbure gaussienne dans les feuilles pour la métrique $e^{2u}g$ soit partout pincée entre deux constantes négatives.
\end{theorem}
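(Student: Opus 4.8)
The plan is to reduce the statement to the solvability of a single leafwise differential inequality, to dualise that solvability by a Hahn-Banach argument, and to close the loop with a Gauss-Bonnet identity for harmonic measures in the spirit of Ghys. For the reduction, write $\kappa_g\in C^\infty_l(X)$ for the leafwise Gaussian curvature of $g$ (Remark \ref{smoothcurvature}) and $\Delta$ for the leafwise Laplace-Beltrami operator of $g$; by \eqref{formule_magique} the leafwise curvature of $e^{2u}g$ at $x$ is $e^{-2u(x)}\bigl(\kappa_g(x)-\Delta u(x)\bigr)$, a function that is continuous on the compact space $X$ whenever $u\in C^\infty_l(X)$. Hence it is enough to produce $u\in C^\infty_l(X)$ with $\Delta u>\kappa_g$ everywhere: the resulting curvature is then continuous and everywhere negative, so pinched between two negative constants.

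For the dualisation, observe that $\mathcal W=\{\Delta u : u\in C^\infty_l(X)\}$ is a linear subspace of $C^0(X)$. If no admissible $u$ existed, $\mathcal W-\kappa_g$ would be disjoint from the open convex cone $\{f\in C^0(X):f>0\}$, and Hahn-Banach separation would furnish a nonzero continuous linear functional; being non-negative on non-negative functions, it is integration against a nonzero finite positive Borel measure $m$ on $X$, and separation forces $\int_X\Delta u\,dm=0$ for all $u\in C^\infty_l(X)$ — i.e. $m$ is a harmonic measure for $(X,\cL,g)$ in the sense of Garnett (\cite{Garnett}) — together with $\int_X\kappa_g\,dm\ge 0$. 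Normalising $m$ to a probability, the theorem is reduced to: \emph{a compact lamination by hyperbolic surfaces carries no harmonic probability measure $m$ with $\int_X\kappa_g\,dm\ge 0$}; equivalently, by Sion's minimax theorem, $\inf_{u}\sup_X(\kappa_g-\Delta u)=\sup_m\int_X\kappa_g\,dm<0$, the supremum running over harmonic probability measures.

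To settle this last point I would argue as Ghys does. For each leaf $L$, uniformisation (the leaves being conformally hyperbolic) gives the unique complete metric of curvature $-1$ conformal to $g_L$, say $e^{2v_L}g_L$, so that $v_L\in C^\infty(L)$ and $\Delta v_L=\kappa_g+e^{2v_L}$. These conformal factors are uniformly bounded, $|v_L|\le B$ with $B$ depending only on the geometry bounds of $g$: the upper bound is Lemma \ref{l.inequality_laplace_phi} applied to $\Delta v_L\ge e^{2v_L}-\|\kappa_g\|_{C^0}$ (Keller-Osserman), and the lower bound follows from the a priori estimates of \cite{AMc,BK}, or from the compactness of $\cRL$ (Theorem \ref{compactness_riemannian_covers}) together with the conformal hyperbolicity of every leaf. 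Thus $v:x\mapsto v_{L_x}(x)$ is a bounded Borel function, smooth along the leaves, with $\Delta v=\kappa_g+e^{2v}$ bounded. Since $m$ is harmonic it is stationary for the leafwise heat semigroup $(D_t)$, whence the identity $\int_X\Delta\phi\,dm=0$, valid for $\phi\in C^\infty_l(X)$, extends to every bounded leafwise-$C^2$ function $\phi$ with bounded leafwise Laplacian via
\[
0=\int_X(D_t\phi-\phi)\,dm=\int_0^t\!\int_X D_s\Delta\phi\,dm\,ds=t\int_X\Delta\phi\,dm .
\]
Taking $\phi=v$ gives
\[
\int_X\kappa_g\,dm=\int_X\bigl(\Delta v-e^{2v}\bigr)\,dm=-\int_X e^{2v}\,dm<0 ,
\]
since $m\neq 0$ and $e^{2v}>0$, contradicting $\int_X\kappa_g\,dm\ge 0$ and completing the argument.

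The main obstacle is the junction between the soft duality and the analysis: one must use Garnett's identification of harmonic measures with the annihilator of $\{\Delta u : u\in C^\infty_l(X)\}$ among finite measures, and one must legitimately apply the harmonic-measure identity to the function $v$, which is only bounded and leafwise smooth rather than $C^\infty_l$ — this rests on the uniform a priori bound $|v_L|\le B$, whose lower half is the less routine ingredient. Once these points are secured the Gauss-Bonnet step itself is a one-line computation.
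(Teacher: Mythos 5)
Your main argument coincides with the paper's: the same reduction (it suffices to produce $u\in C^\infty_l(X)$ with $\Delta u>\kappa_g$, negativity plus compactness of $X$ then giving the pinching), the same Hahn--Banach separation yielding, in case of failure, a harmonic measure $m$ with $\int_X\kappa_g\,dm\ge 0$, and the same contradiction with the laminated Gauss--Bonnet inequality --- this is precisely Lemma \ref{keylemmaGhys} and its proof. The only divergence is that the paper quotes Ghys's Theorem \ref{GrosBonnet} from \cite{Ghys_GB} as a black box, whereas you sketch a proof of it: leafwise uniformization $h_L=e^{2v_L}g_L$, uniform two-sided bounds on $v_L$, and the extension of the identity $\int_X\Delta\phi\,dm=0$ to bounded, leafwise smooth $\phi$ with bounded leafwise Laplacian via stationarity under the leafwise heat semigroup. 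That route is viable (it is essentially Ghys's own argument), and the bounds are available inside the paper: the upper bound on $v_L$ by Lemma \ref{l.inequality_laplace_phi} as you say, and the lower bound from the Brody-type rescaling of Lemma \ref{l.upper_bound} applied to the compact family $\cRL$ of Theorem \ref{compactness_riemannian_covers}, whose elements and Cheeger--Gromov limits are Riemannian covers of leaves, hence conformally hyperbolic.

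Two caveats on the part you add. First, your alternative justification of the lower bound by \cite{AMc,BK} is not appropriate: those estimates presuppose a background metric of pinched negative curvature, which is exactly what the theorem is trying to produce; moreover the surfaces $D_r$ of the paper's discontinuity example show that for a single surface of bounded geometry the conformal factor to the hyperbolic metric admits no uniform lower bound, so some input of the compactness of the family (your second justification, which is correct) is unavoidable. Second, to integrate $v$, $e^{2v}$ and $\Delta v$ against $m$ you need $x\mapsto v_{L_x}(x)$ to be $m$-measurable; you assert it is Borel without argument, and this is in fact the delicate content of Ghys's theorem --- the paper points out that Theorem \ref{GrosBonnet} rests on the upper semicontinuity of the leafwise uniformization maps, which is where that measurability comes from. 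Similarly, the equivalence between harmonicity as defined by $\int_X\Delta u\,dm=0$ for $u\in C^2_l(X)$ and stationarity under the leafwise heat semigroup, which your displayed computation uses twice, is a nontrivial piece of Garnett's theory \cite{Garnett,Candel_harmonic} and should be cited as such. With Gauss--Bonnet taken as the quoted input it is in the paper, your proof is complete and is the paper's proof.
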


Nous considérons l'\emph{opérateur de Laplace-Beltrami laminé} défini pour toute fonction $u\in C^2_{l}(M)$ par $\Delta^{\cL}u(x)=\Delta_{L_x}u(x)$, où $x\in X$ et $\Delta_{L_x}$ est l'opérateur de Laplace-Beltrami sur la feuille $L_x$ passant par $x$.

\begin{defn}
\label{harmonic}
Une \emph{mesure harmonique} d'une lamination riemannienne compacte $(X,\cL)$ est une mesure de probabilité $m$ sur $X$ telle que pour tout $u\in C^2_{l}(X)$
$$\int_M\Delta^{\cL}u\,dm=0.$$
\end{defn}

L'existence de telles mesures pour les laminations compactes découle des travaux de Garnett (cf. \cite{Garnett}).

\begin{rem}
Par compacité de $X$, l'espace $C^{\infty}_{l}(X)$ est dense dans $C^2_{l}(X)$ (cf. le Lemme \ref{smoothdense}). Donc une mesure $m$ est harmonique si et seulement si pour tout $u\in C_{l}^{\infty}(X)$ l'intégrale de $\Delta^{\cL} u$ contre $m$ est nulle.
\end{rem}

Nous utiliserons une version laminée du Théorème de Gauss-Bonnet dû à Ghys (voir \cite{Ghys_GB}). Ce résultat est plus faible que le Théorème d'uniformisation simultanée et n'utilise en fait que la continuité supérieure des applications d'uniformisation des feuilles.

\begin{theorem}[Ghys]
\label{GrosBonnet}
Soit $(X,\cL)$ une lamination compacte par surfaces hyperboliques et $g$ une métrique laminée. Soit $\kappa(x)$ denote la courbure gaussienne au point $x\in X$ de la feuille passant par $x$. Alors
$$\int_X\kappa\,dm<0.$$
\end{theorem}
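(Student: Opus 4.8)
The plan is to deduce the inequality from the uniformization of the leaves together with the two structural properties of a harmonic measure. Fix a harmonic probability measure $m$ on $X$. By the classical Poincaré--Koebe theorem each leaf $L$ carries a unique complete conformal metric of curvature $-1$, say $e^{2u_L}g_L$, and setting $u(x)=u_{L_x}(x)$ defines a function $u\colon X\to\R$ that is smooth along the leaves. Applying the curvature formula \eqref{formule_magique} on each leaf, with $g$ as background metric and $-1$ as prescribed curvature, gives the leafwise identity
$$\Delta^{\cL}u=\kappa+e^{2u}.$$
Integrating against $m$ would then give $\int_X\kappa\,dm=\int_X\Delta^{\cL}u\,dm-\int_X e^{2u}\,dm$, and since $e^{2u}>0$ while $m$ is a probability measure, the last term is a strictly positive real number. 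Hence the whole statement reduces to proving that $\int_X\Delta^{\cL}u\,dm\le 0$. If $u$ were known to lie in $C^2_l(X)$ -- which is precisely the content of Candel's simultaneous uniformization theorem (Theorem \ref{Candel}) -- this integral would vanish by the very definition of a harmonic measure, and we would be done; the point of Ghys's argument is to reach the conclusion with far less regularity on $u$.

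Two preliminary facts are needed. First, $u$ is bounded. It is bounded from above by the domain monotonicity of the Poincaré metric (Ahlfors--Schwarz): the hyperbolic metric of a leaf $L$ is, on any embedded geodesic $g_L$-ball $B\subset L$, no larger than the Poincaré metric of $B$ viewed as an abstract hyperbolic Riemann surface, and by the bounded geometry of the compact lamination $(X,\cL,g)$ such balls can be chosen of a uniform radius with uniformly controlled geometry, whence $e^{2u}\le C$ on $X$ for a constant depending only on the geometric bounds. The boundedness of $u$ from below -- together with the upper semicontinuity of $u$ on $X$ that the argument really uses -- follows either from Ghys's original argument via normal families of uniformizing maps \cite{Ghys_GB}, or, within the framework of the present paper, from the continuity of the uniformization maps in the Cheeger--Gromov topology (Theorem \ref{compact_unif}) applied along subsequences supplied by the compactness of $\cRL$ (Theorem \ref{compactness_riemannian_covers}), using the elementary observation that every cover of a hyperbolic surface is again hyperbolic, so that the relevant Cheeger--Gromov limits stay in the hyperbolic regime. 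In particular $e^{2u}$ is bounded, hence $\Delta^{\cL}u=\kappa+e^{2u}$ is a bounded function on $X$ (Remark \ref{smoothcurvature} gives $\|\kappa\|_{C^0}<\infty$), so $\int_X\Delta^{\cL}u\,dm$ makes sense.

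With these facts in hand I would establish $\int_X\Delta^{\cL}u\,dm\le 0$ using the leafwise heat semigroup $(D_t)_{t\ge 0}$, for which the harmonic measure $m$ is stationary, $\int_X D_tf\,dm=\int_X f\,dm$ for all $f\in C^0(X)$ (Garnett; the transverse continuity of the leafwise heat kernel underlying this is Garnett's Fact 1, proved in general by Candel). Since $u$ is bounded and upper semicontinuous, write it as a pointwise decreasing limit $u=\lim_n f_n$ with $f_n\in C^0(X)$ and $\sup_n\|f_n\|_\infty<\infty$. For each fixed $t>0$ the operator $D_t$ is given on each leaf by a nonnegative kernel of total mass $1$, hence is monotone, and by monotone convergence on the leaves $D_tf_n\downarrow D_tu$ pointwise; therefore $\int_X D_tu\,dm\le\int_X D_tf_n\,dm=\int_X f_n\,dm$ for every $n$, and letting $n\to\infty$ gives $\int_X D_tu\,dm\le\int_X u\,dm$, so that $\frac1t\bigl(\int_X D_tu\,dm-\int_X u\,dm\bigr)\le 0$ for all $t>0$. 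As $t\to 0^+$, the difference quotient $\frac1t(D_tu-u)$ converges pointwise to $\Delta^{\cL}u$ (because $u$ is leafwise smooth on leaves of bounded geometry) and is dominated by $\|\Delta^{\cL}u\|_\infty$ via a Duhamel estimate along each leaf; dominated convergence then yields $\int_X\Delta^{\cL}u\,dm\le 0$. Combining with the first paragraph, $\int_X\kappa\,dm\le-\int_X e^{2u}\,dm<0$.

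The main obstacle is precisely the passage from the comfortable situation "$u\in C^2_l(X)$'' to the one actually at hand, in which $u$ is only bounded and upper semicontinuous: one must make sense of $\int_X\Delta^{\cL}u\,dm$ and control its sign although the defining identity of a harmonic measure is not directly available for $u$. The heat-semigroup estimate above circumvents this, but it leans on the two non-trivial inputs isolated in the second paragraph -- the uniform boundedness of the leafwise Poincaré conformal factor, via bounded geometry and Ahlfors--Schwarz, and its upper semicontinuity -- the latter being exactly what makes this Gauss--Bonnet statement weaker than Candel's simultaneous uniformization theorem rather than equivalent to it.
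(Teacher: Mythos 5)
Your argument is essentially a correct reconstruction of Ghys's proof; note that the paper itself does not prove Théorème \ref{GrosBonnet} — it quotes it from \cite{Ghys_GB} and only uses it as an input for Lemme \ref{keylemmaGhys} — so the comparison is with Ghys's original argument and with the paper's surrounding remarks. Your reduction is the right one: with $u$ the leafwise Poincaré conformal factor, $\Delta^{\cL}u=\kappa+e^{2u}$, so everything rests on $\int_X\Delta^{\cL}u\,dm\le 0$ for a function that is only leafwise smooth, bounded, and upper semicontinuous, and your heat-semigroup route (monotone approximation of the u.s.c. function $u$ from above by continuous functions, positivity and stationarity of $D_t$, then the Duhamel bound $\vert D_tu-u\vert\le t\|\Delta^{\cL}u\|_{\infty}$ and dominated convergence) is sound, granting the bounded geometry of the leaves. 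Two caveats deserve to be made explicit. First, the stationarity $\int D_tf\,dm=\int f\,dm$ for $f\in C^0(X)$, and even the $m$-measurability of $D_tu$, rest on the transverse regularity of the leafwise heat kernel (Garnett's Fact 1, proved in general in \cite{Candel_harmonic}); this is legitimate and non-circular, but it is a heavier input than Ghys's own argument needs, and you should cite it as such rather than as folklore. Second, for the lower bound and the upper semicontinuity of $u$ you cannot simply point back to \cite{Ghys_GB} (that is the theorem being proved); the self-contained route inside this paper is the one you sketch, and it does work, but it should be spelled out: take $\cM=\cRL$, which satisfies the three hypotheses of \S\ref{ss.cont_unif} (Cheeger--Gromov compactness by Théorème \ref{compactness_riemannian_covers}, base-point invariance, and hyperbolicity because a Riemannian cover of a hyperbolic surface is hyperbolic), observe that the hyperbolic metric of a cover is the pullback of that of the leaf so that $e^{-u(p)}=\|D\Pi(0)\|$ for the uniformization $\Pi$ of the pointed leaf; then Lemme \ref{l.upper_bound} gives the uniform lower bound on $u$ (needed to make $\int u\,dm$ finite in your difference quotient), Lemme \ref{l.lower_bound} the upper bound, and Théorème \ref{compact_unif} gives (in fact full) continuity of $u$ along convergent sequences, hence the semicontinuity you use. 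Be aware that this leans on the whole continuity theorem, i.e. more than the "continuité supérieure" the paper says suffices; it is not circular (Théorème \ref{compact_unif} is proved independently of Gauss--Bonnet), but in the paper's architecture it partly undercuts the point of the Gauss--Bonnet route, which is to be independent of simultaneous uniformization, whereas Ghys obtains the semicontinuity and the bounds by an elementary normal-families argument.
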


Rappelons que si $g'=e^{2u}g$, les courbures $\kappa'(x)$ et $\kappa(x)$ en $x$ des métriques $g'$ et $g$ sont reliées par la formule
$$\kappa'(x)=e^{-2u(x)}\left(\kappa(x)-\Delta u(x)\right).$$

Ainsi la preuve du Théorème \ref{negcurvedmetrics} se réduit à celle du Lemme suivant dû à Ghys (see \cite{AY,Ghys_GB,Ghys_parabolique}).

\begin{lemma}
\label{keylemmaGhys}
Soit $(X,\cL)$ une lamination compacte par surfaces hyperboliques et $g$ une métrique laminée. Alors il existe $u\in C^{\infty}_{l}(X)$ telle que pour tout $x\in X$
$$\kappa(x)-\Delta^{\cL}u(x)<0.$$
\end{lemma}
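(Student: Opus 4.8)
The plan is to deduce the lemma from the laminated Gauss--Bonnet theorem (Theorem~\ref{GrosBonnet}) by a Hahn--Banach separation argument in the Banach space $C^0(X)$. Consider the linear subspace
$$S:=\left\{\,\Delta^{\cL}u\ :\ u\in C^\infty_l(X)\,\right\}\subset C^0(X),$$
which makes sense since, for $u\in C^\infty_l(X)$, the function $\Delta^{\cL}u$ is continuous on $X$ (both the laminated metric and $u$ vary transversally in the $\Ciloc$ topology). Let $P\subset C^0(X)$ be the cone of functions that are strictly positive at every point of $X$; by compactness of $X$ this cone is open. The conclusion of the lemma is exactly that $\kappa\in S-P$, because writing $\kappa=\Delta^{\cL}u-f$ with $f\in P$ amounts to $\kappa-\Delta^{\cL}u=-f<0$ everywhere on $X$.

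First I would check that $C:=S-P$ is a nonempty open convex subset of $C^0(X)$: it is a sum of convex sets, it contains the constant function $-1$, and $C=\bigcup_{s\in S}(s-P)$ is a union of open sets. Arguing by contradiction, suppose $\kappa\notin C$. By the geometric form of the Hahn--Banach theorem there is a nonzero continuous linear functional $\ell$ on $C^0(X)$ with $\ell(\kappa)\geq\ell(y)$ for every $y\in C$. Since $S$ is a linear subspace, $\ell$ must vanish on it: fixing any $f_0\in P$, the translate $S-f_0$ lies in $C$, so $\ell(s)\leq\ell(\kappa)+\ell(f_0)$ for all $s\in S$, and a linear functional bounded above on a subspace vanishes on it; hence $\ell(\Delta^{\cL}u)=0$ for all $u\in C^\infty_l(X)$. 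Since $-P$ is a cone, for $g<0$ one has $\lambda g\in C$ for every $\lambda>0$ and $\lambda\,\ell(g)=\ell(\lambda g)\leq\ell(\kappa)$, which forces $\ell(g)\leq 0$; in particular $\ell(1)\geq 0$, and for $h\geq 0$, $\eps>0$ the function $-(h+\eps)$ is negative, so $\ell(h)\geq-\eps\,\ell(1)$ and therefore $\ell(h)\geq 0$. Thus $\ell$ is a nonzero positive linear functional; by the Riesz representation theorem it is integration against a finite positive Borel measure $\mu$, and $\mu(X)=\ell(1)>0$ (otherwise $\mu=0$ and $\ell=0$), so $m:=\mu/\mu(X)$ is a probability measure.

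It remains to recognize $m$ as a harmonic measure and to conclude. From $\ell|_S=0$ we obtain $\int_X\Delta^{\cL}u\,dm=0$ for all $u\in C^\infty_l(X)$, and since harmonicity need only be tested against smooth functions (by density of $C^\infty_l(X)$ in $C^2_l(X)$, Lemma~\ref{smoothdense}), $m$ is harmonic in the sense of Definition~\ref{harmonic}. On the other hand, applying $\ell(\kappa)\geq\ell(y)$ to $y=-\eps\in C$ gives $\ell(\kappa)\geq-\eps\,\ell(1)$ for every $\eps>0$, hence $\int_X\kappa\,dm\geq 0$, contradicting Theorem~\ref{GrosBonnet}. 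Therefore $\kappa\in C$, i.e. there exist $u\in C^\infty_l(X)$ and $f\in P$ with $\kappa-\Delta^{\cL}u=-f<0$ on all of $X$, which is the claim. In this scheme the genuinely substantive input is the laminated Gauss--Bonnet inequality; the point that requires care is converting the abstract separating functional into a true harmonic \emph{probability} measure, and this is exactly where one must work with the \emph{open} cone of strictly positive functions rather than the closed cone of nonnegative ones, so that $\ell$ is forced to be positive and of positive total mass.
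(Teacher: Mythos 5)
Your argument is correct and is essentially the paper's own proof: Hahn--Banach separation produces a nonzero positive functional vanishing on the image of $\Delta^{\cL}$, Riesz representation turns it into a harmonic probability measure (using the density Lemma~\ref{smoothdense} to pass from $C^\infty_l$ to $C^2_l$ test functions), and Ghys's laminated Gauss--Bonnet theorem gives the contradiction $\int_X\kappa\,dm\geq 0$. The only difference is presentational: the paper separates $\Pi(\kappa)$ from the open convex cone $\Pi(\Lambda^-)$ in the quotient $\cC/\cH$ by the closed subspace $\cH=\overline{\{\Delta^{\cL}u\}}$, whereas you separate $\kappa$ from the open convex set $S-P$ directly in $C^0(X)$, which amounts to the same thing.
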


\begin{proof}[Preuve]
Définissons l'espace de Banach $\cC$ formé des fonctions continues sur $X$ muni de la norme de la convergence uniforme, ainsi que le sous-espace fermé $\cH\dans\cC$, défini comme l'adhérence de $\{\Delta^{\cL}u;\,u\in C^{\infty}_l(X)\}$. L'espace $\cH$ étant fermé, la projection $\Pi:\cC\to\cC/\cH$ est continue et ouverte entre espaces de Banach. Remarquons que $(\cC/\cH)'$ s'identifie isométriquement au complément orthogonal de $\cH$ (i.e. l'espace des formes linéaires continues sur $\cC$ s'annulant sur $\cH$). Soit $\Lambda^-\dans\cC$ le cône des fonctions strictement négatives. La projection $\widehat{\Lambda}^-=\Pi(\Lambda^-)$ est ouverte et convexe. 
La conclusion du lemme est équivalente au fait que $\widehat{\kappa}=\Pi(\kappa)$ appartienne à $\widehat{\Lambda}^-$.

Supposons le contraire. Par le théorème de Hahn-Banach (voir \cite[Lemme 1.3]{Brezis}), il existe une forme linéaire continue $m\in(\cC/\cH)'$ telle que pour tout $u\in\widehat{\Lambda}^-$, $m(u)< a=m(\widehat{\kappa})$. En évaluant $m$ sur les fonctions de la forme $\lambda u$, $\lambda>0$ nous obtenons $m\leq 0$ sur $\widehat{\Lambda}^-$ (en faisant $\lambda$ tendre vers l'infini) et $a\geq 0$ (cette fois en faisant tendre $\lambda$ vers zéro). Ceci contredit le Théorème de Gauss-Bonnet de Ghys, car $m$ s'identifie alors à un élément \emph{positif} du complément orthogonal de $\cH$, c'est-à-dire (par le Théorème de représentation de Riesz) à une mesure harmonique vérifiant $\int_X\kappa\,dm=a\geq 0$. 
\end{proof}

\bibliographystyle{plain}

\end{document}